\documentclass{amsart}
\usepackage{amsrefs}
\numberwithin{equation}{section}

\def \rn {\mathbb{R}^n}

\def \rr {\mathbb{R}}
\def \nn {\mathbb{N}}
\def \crit {2^\star}
\def \crits {2^\star(s)}
\def \eps {\epsilon}
\def \xe {x_\epsilon}
\def \ue {u_\epsilon}

\def \we {w_\epsilon}
\def \lae {\lambda_\epsilon}

\def \bu {\bar{u}}

\def \ballp {B_1(0)\setminus\{0\}}
\def \rnp {\rn\setminus\{0\}}
\newtheorem{thm}{Theorem}
\newtheorem{prop}{Proposition}[section]
\newtheorem{defi}{Definition}
\newtheorem{coro}{Corollary}[section]
\newtheorem{lem}{Lemma}[section]

\def\neweq#1{\begin{equation}\label{#1}}
\def\endeq{\end{equation}}

\title[Sharp profiles of singular solutions to elliptic equations]{Sharp asymptotic profiles for singular solutions to an elliptic equation with a sign-changing nonlinearity}
\author{Florica C. C\^{\i}rstea}
\address{School of Mathematics and Statistics, The University of Sydney, NSW 2006, Australia}
\email{florica.cirstea@sydney.edu.au}
\author{Fr\'ed\'eric Robert}
\address{Institut Elie Cartan de Lorraine, Universit\'e de Lorraine, BP 70239, 54506 Vand\oe uvre-l\`es-Nancy, France}
\email{frederic.robert@univ-lorraine.fr}
\date{January 20th, 2016}
\begin{document}

\thanks{Part of this work was performed while FR was visiting the University of Sydney. He thanks FC and the School of Mathematics and Statistics for their support. FR is supported by a grant from ``Universit\'e de Lorraine" and ``R\'egion Lorraine". FC was supported by ARC Discovery grant number DP120102878 ``Analysis of non-linear partial differential equations describing singular phenomena''. FC is also grateful for the support received during her visits to University of Lorraine in 2014 and 2016.  FC and FR gratefully acknowledge the support from the programme PHC FAST 12739WA ``Progress in geometric analysis and applications" coordinated by Philippe Delano\"e and Neil Trudinger. }
\subjclass[2010]{Primary 35J91; Secondary 35A20, 35J75, 35J60}

\begin{abstract} Given $B_1(0)$ the unit ball of $\rn$ ($n\geq 3$), we study smooth positive singular solutions $u\in C^2(B_1(0)\setminus \{0\})$ to $-\Delta u=\frac{u^{\crits-1}}{|x|^s}-\mu u^q$. Here $0< s<2$, $\crits:=2(n-s)/(n-2)$ is critical for Sobolev embeddings, $q>1$ and $\mu> 0$. When $\mu=0$ and $s=0$, the profile at the singularity $0$ was fully described by Caffarelli-Gidas-Spruck. We prove that when $\mu>0$ and $s>0$, besides this profile, two new profiles might occur. We provide a full description of all the singular profiles. Special attention is accorded to solutions such that $\liminf_{x\to 0}|x|^{\frac{n-2}{2}}u(x)=0$ and $\limsup_{x\to 0}|x|^{\frac{n-2}{2}}u(x)\in (0,+\infty)$. The particular case $q=(n+2)/(n-2)$ requires a separate analysis which we also perform. 
\end{abstract}
\maketitle
\tableofcontents
\section{Introduction}
We let $B_1(0)$ be the unit ball of $\rn$ with $n\geq 3$. For $s\in (0,2)$, $q>1$ and $\mu>0$ fixed, we consider a positive function $u\in C^\infty(\ballp)$ such that
\begin{equation}\label{eq:u}
-\Delta u=\frac{u^{\crits-1}}{|x|^s}-\mu u^q\hbox{ in }\ballp,
\end{equation}
where $\crits:=\frac{2(n-s)}{n-2}$ is critical from the viewpoint of the Hardy--Sobolev embeddings. We say that $0$ is a removable singularity for $u$ if $u$ can be extended at $0$ by a H\"older function. Otherwise, we say that $0$ is a non-removable singularity. Our objective here is to analyze the behavior of $u$ at $0$ when $0$ is a non-removable singularity.

\medskip\noindent For the sole pure Sobolev critical nonlinearity, that is when $\mu=s=0$, the equation $-\Delta u=u^{\crit-1}$ is conformally invariant (here, $\crit:=2^\star(0)=\frac{2n}{n-2}$). In this context, the pioneering analysis is due to Caffarelli-Gidas-Spruck \cite{cgs}. Using the Alexandrov reflection principle, they showed that singular solutions are controlled from above and below by $x\mapsto |x|^{-\frac{n-2}{2}}$ around $0$. They also outlined the central role of 
\begin{equation}\label{key}
 x\mapsto W(x):=|x|^{\frac{n-2}{2}}u(x).
\end{equation}
More precisely, Caffarelli-Gidas-Spruck proved that, up to a change of variable, the function $W$ (defined in \eqref{key})
behaves around $0$ like a positive periodic function in $\ln |x|$. In the sequel, such a behavior will be referred to as {\it (CGS)} profile (see the precise definition below). The blow-up profile has been refined by Korevaar-Mazzeo-Pacard-Schoen \cite{kmps}. When $\mu=0$ and $s>0$, Hsia-Lin-Wang \cite{hlw} proved that singular solutions to \eqref{eq:u} also blow-up along a (CGS) profile.

\medskip\noindent The situation happens to be much richer when one drifts away from the conformally invariant equation, that is when $\mu>0$ in \eqref{eq:u} (in addition to $s>0$). In equation \eqref{eq:u}, three terms compete with each other: asymptotically, one expects that one of these terms is negligible. In Theorem \ref{thm:1}, we prove that this is the case: moreover, the function $W$ in \eqref{key} discriminates the three regimes of singular solutions.

\smallskip\noindent Indeed, when $\lim_{x\to 0}W(x)=0$, then the singularity is removable. This situation always occurs when $q>\crit-1$.

\smallskip\noindent When $W(x)< C$ around $0$ for some constant $C>0$, then $\mu u^q$ is negligible for the preliminary analysis, and a singular solution $u$ behaves essentially like a smooth positive solution to
\begin{equation}\label{eq:U:rnp}
-\Delta U=\frac{U^{\crits-1}}{|x|^{s} }\hbox{ in }\rnp.
\end{equation}
Here, two potential profiles might occur. When $c<W< C$ around $0$  for some positive constants $c,C>0$, then the classical (CGS) profile occurs: this is the first blow-up profile. However, unlike the exact conformally invariant equation \eqref{eq:U:rnp}, the function $W$ might oscillate between $0$ and a positive constant: in this situation, a second profile occurs, namely the profile of type (MB) (for ``Multi-Bump'') described below. We prove that the existence of this (MB) profile is due to a nontrivial influence of the perturbation $\mu u^q$. A related phenomenon has been observed by Chen-Lin \cite{chenlin} for equation $-\Delta u= K(x) u^{\crit-1}$ with $x\mapsto K(x)$ having a specific behavior at $0$.

\medskip\noindent The introduction of the weight $|x|^{-s}$ in the equation generates a third asymptotic profile. Indeed, unlike the scalar curvature-type equations studied in \cite{cgs}, \cite{kmps} and \cite{hlw}, there are singular solutions to \eqref{eq:u} that are not controled by $x\mapsto |x|^{-\frac{n-2}{2}}$  when $\crits-1<q<\crit-1$, and therefore, $W$ is not even bounded from above. In this situation, we observe that $-\Delta u$ is negligible in \eqref{eq:u} compared to the nonlinear part. In particular, we show that $u$ behaves like the solution to $|x|^{-s} U^{\crits-1}-\mu U^q=0$. We then say that the profile is of (ND) type (for ``Non Differential'').

\medskip\noindent Nonsingular positive solutions to \eqref{eq:U:rnp} are exactly of the form
\neweq{U1-def}x\mapsto U(x)=U_\lambda(x):=\lambda^{-\frac{n-2}{2}} U_1(x/\lambda):=c_{n,s}\left(\frac{\lambda^{\frac{2-s}{2}}}{\lambda^{2-s}+|x|^{2-s}}\right)^{\frac{n-2}{2-s}} \quad \text{in }\rn,\endeq
for some $\lambda>0$, where $c_{n,s}:=\left((n-s)(n-2)\right)^{\frac{1}{\crits-2}}$ (see Proposition \ref{prop:poho:U}).

\begin{defi} We say that $u$ develops a profile of (CGS) type if there exists a positive periodic function $v\in C^\infty(\rr)$ such that
$$\lim_{x\to 0}\left(|x|^{\frac{n-2}{2}}u(x)-v(-\ln |x|)\right)=0.\eqno{(CGS)}$$

\medskip\noindent We say that $u$ develops a profile of (MB) type (for ``Multi-Bump") if there exists a sequence $(r_k)_k>0$ decreasing to $0$ such that $r_{k+1}=o(r_k)$ as $k\to +\infty$ and 
\begin{equation*}
u(x)=\left(1+o(1)\right)\sum_{k=0}^\infty c_{n,s}\left(\frac{r_{k}^{\frac{2-s}{2}}}{r_{k}^{2-s}+|x|^{2-s}}\right)^{\frac{n-2}{2-s}}\hbox{ as }x\to 0.\eqno{(MB)}
\end{equation*}
\medskip\noindent We say that $u$ develops a (ND) type profile (for ``Non Differential") if 
$$\lim_{x\to 0}|x|^{\frac{s}{q-(\crits-1)}}u(x)=\mu^{-\frac{1}{q-(\crits-1)}}.\eqno{(ND)}$$
\end{defi}

\medskip\noindent We are now in position to state our first theorem. We prove that when $q\neq \crit-1$, then singular solutions to \eqref{eq:u} behave according to one of these three profiles.

\begin{thm}\label{thm:1} Let $u\in C^\infty(\ballp)$ be a positive solution to \eqref{eq:u}. Then
 
 \medskip $\bullet$ If $q>\crit-1$, then $0$ is a removable singularity,

\medskip $\bullet$ If $\crits-1<q<\crit-1$, then either $0$ is a removable singularity, or $u$ develops a profile of type (CGS), (MB) or (ND),

\medskip $\bullet$ If $q\leq \crits-1$, then either $0$ is a removable singularity, or $u$ develops a profile of type (CGS) or (MB).

\medskip\noindent Moreover, if $u$ develops an (MB) profile, then $\crit-2<q<\crit-1$ and the sequence $(r_k)$ satisfies 
$$r_{k+1}=(K+o(1))r_{k}^{\frac{1}{q-(\crit-2)}}$$
as $k\to +\infty$, where $K$ is the positive constant defined by
$$K:=\left(\frac{(\crit-1-q)c_{n,s}^{q-1}\mu }{(q+1)(n-2)|\partial B_1(0)|}\int_{\rn} \frac{dx}{(1+|x|^{2-s})^{(q+1)\frac{n-2}{2-s}}}\right)^{\frac{2}{(n-2)(q-(\crit-2))}}.$$
\end{thm}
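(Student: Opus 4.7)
The plan is to classify the singular solutions of \eqref{eq:u} by the behavior of $W(x):=|x|^{\frac{n-2}{2}}u(x)$ from \eqref{key}, using the dichotomy ``$W$ bounded near $0$'' vs ``$W$ unbounded near $0$''. In the bounded branch we reduce \eqref{eq:u} to a subcritical perturbation of \eqref{eq:U:rnp} on $\rnp$ and recover (CGS), (MB), or removability depending on $\liminf W$ and $\limsup W$; in the unbounded branch a different rescaling forces the limit to satisfy an algebraic equation, yielding (ND). Compatibility of each branch with the exponent $q$ will give the range restrictions in the statement.

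In the unbounded branch, pick $x_k\to 0$ with $|x_k|^{\frac{n-2}{2}}u(x_k)\to\infty$ and rescale $u$ at the (ND) homogeneous scale $|x|^{-s/(q-(\crits-1))}$. A blow-up analysis shows that $|\Delta u|$ becomes negligible compared to $u^{\crits-1}/|x|^s$ and $\mu u^q$ at this scale, so any limit $V$ must satisfy $|y|^{-s}V^{\crits-1}=\mu V^q$, with unique positive solution $V(y)=\mu^{-1/(q-(\crits-1))}|y|^{-s/(q-(\crits-1))}$; a standard uniqueness argument for the asymptotic profile then yields (ND). For (ND) to correspond to $W$ unbounded one needs $s/(q-(\crits-1))>(n-2)/2$, i.e., $\crits-1<q<\crit-1$; outside this range the unbounded branch is empty.

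In the bounded branch, fix $x_k\to 0$ and rescale $\tilde u_k(y):=|x_k|^{\frac{n-2}{2}}u(|x_k|y)$, which satisfies
\neweq{plan:rescale}
-\Delta \tilde u_k=\frac{\tilde u_k^{\crits-1}}{|y|^s}-\mu|x_k|^{\alpha}\tilde u_k^q,\quad \alpha=\tfrac{(n-2)(\crit-1-q)}{2}.
\endeq
When $q<\crit-1$, $\alpha>0$ and the perturbation vanishes in the limit; elliptic compactness gives subsequential limits solving \eqref{eq:U:rnp} on $\rnp$. If $\liminf W>0$, the classification of such limits by Caffarelli--Gidas--Spruck and Hsia--Lin--Wang, applied uniformly in the scale parameter, shows $W(x)-v(-\ln|x|)\to 0$ for some positive periodic $v$, i.e., (CGS). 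If $\liminf W=0$ but $\limsup W>0$, the same compactness applied at local maxima $x_k$ of $W$ with $r_k:=|x_k|$ extracts isolated standard bubbles $U_{r_k}$ of \eqref{U1-def}; a bubble-tree separation argument then gives $r_{k+1}=o(r_k)$ and the (MB) decomposition. If $\lim W=0$, a Serrin--Brezis--Kato bootstrap gives removability. For $q>\crit-1$, $\alpha<0$ in \eqref{plan:rescale} and the perturbation explodes in the limit at any scale where $W$ stays positive; combined with the bootstrap, this forces $W\to 0$ and yields removability directly.

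The final task is the recurrence for $r_{k+1}/r_k$ in the (MB) case, obtained by the Pohozaev identity applied to $u$ on an annulus $A_k=\{\rho_k'\leq|x|\leq \rho_k\}$ with $r_{k+1}\ll \rho_k'\ll \rho_k\ll r_k$. The Hardy--Sobolev nonlinearity is Pohozaev-invariant, so only the perturbation's interior integral and the boundary terms survive; the interior contribution is
\neweq{plan:poho}
\mu\,\frac{(n-2)(\crit-1-q)}{2(q+1)}\int_{A_k}u^{q+1}\,dx,
\endeq
whose leading part is obtained by substituting the dominant bubble $U_{r_k}$ and equals $c_{n,s}^{q+1}r_k^{n-(q+1)(n-2)/2}\int_{\rn}(1+|y|^{2-s})^{-(q+1)(n-2)/(2-s)}\,dy$ (the integral being finite because $q<\crit-1$). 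The boundary Pohozaev, evaluated on the outgoing tail of $U_{r_k}$ and the incoming head of $U_{r_{k+1}}$, produces cross-terms whose dominant part isolates $r_{k+1}^{n-2}$ multiplied by $(n-2)|\partial B_1(0)|$ from $\partial_r U_1$ at infinity and the sphere surface measure. Matching these two sides and solving for $r_{k+1}$ gives the stated recurrence with the explicit $K$. The main obstacle is the bubble-tree analysis: ruling out intermediate bubble scales between $r_k$ and $r_{k+1}$ so that a single Pohozaev annulus captures the exact balance, expanding $u\approx U_{r_k}+U_{r_{k+1}}+\text{corrections}$ with enough precision in the transition annulus to identify the dominant boundary cross-term, and controlling error contributions of \eqref{plan:poho} outside the dominant bubble; the range $\crit-2<q<\crit-1$ is forced by requiring both $r_{k+1}=o(r_k)$ and finiteness of the integral defining $K$.
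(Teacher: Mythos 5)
Your overall plan — trichotomy by the behavior of $W=|x|^{\frac{n-2}{2}}u$, blow-up at discrete scales for the (MB) case, and a Pohozaev balance to get the recurrence for $r_{k+1}/r_k$ — matches the paper's architecture. However, there are several concrete places where your route either diverges from the paper's and would not close, or contains errors.

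First, the (ND) branch. You propose to rescale at $|x_k|$ and argue that $\Delta u$ becomes negligible, leaving the algebraic equation $|y|^{-s}V^{\crits-1}=\mu V^q$. This is a singularly perturbed limit (the Laplacian is multiplied by a vanishing coefficient), and elliptic compactness does not come for free in such limits: one needs uniform $C^1$ or $C^{0,\alpha}$ control that the degenerate equation does not supply directly, and the convergence of $u_k$ to a solution of the limiting \emph{algebraic} equation is not automatic. The paper avoids this entirely: it rescales at the smaller scale $\lambda_i=|x_i|^{s/2}u(x_i)^{-(2-s)/(n-2)}$, so that the Laplacian persists and the limit $U$ solves $-\Delta U=U^{\crits-1}-\ell^{q-\crits+1}\mu U^q$ in $\rn$ with $0<U\leq U(0)=1$; then a Liouville theorem (Lemma~\ref{lem:hypothesis}, via \cite{bi},\cite{li}) forces $U$ to be the constant and pins $\ell=\mu^{-1/(q-(\crits-1))}$. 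You would need to supply a genuine argument for your compactness step or switch to this intermediate scaling.

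Second, the recurrence and the constant $K$. Your Pohozaev identity on the annulus $A_k=\{\rho_k'\leq|x|\leq\rho_k\}$ with $r_{k+1}\ll\rho_k'\ll\rho_k\ll r_k$ excludes both bubble cores, so $\int_{A_k}u^{q+1}\,dx$ is of lower order than the quantity you need (it behaves like $(\rho_k')^{n-(n-2)(q+1)}r_{k+1}^{(n-2)(q+1)/2}=o(r_{k+1}^{\frac{n-2}{2}(\crit-1-q)})$ when $q>\crit-2$), and substituting ``the dominant bubble $U_{r_k}$'' to get $r_k^{n-(q+1)(n-2)/2}\int U_1^{q+1}$ is incorrect both in the choice of domain and in the scale: the dominant contribution is from the bubble at scale $r_{k+1}$, not $r_k$. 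Your claimed boundary cross-term $r_{k+1}^{n-2}$ is also not scale-consistent: the cross term in the neck region scales as $(r_{k+1}/r_k)^{(n-2)/2}$, not $r_{k+1}^{n-2}$. The ingredient that makes the paper's computation clean and that you do not invoke is the vanishing of the \emph{asymptotic} Pohozaev integral, $P^{(q)}(u)=0$ (Proposition~\ref{prop:prelim:1}), which converts the identity into $P_{\lambda_k}^{(q)}(u)=\frac{(n-2)(\crit-1-q)\mu}{2(q+1)}\int_{B_{\lambda_k}(0)}u^{q+1}$ at the single sphere $\lambda_k=\sqrt{r_kr_{k+1}}$. There the rescaled solution converges to $c_{n,s}(|y|^{2-n}+1)$, so the boundary side is $(1+o(1))\frac{(n-2)^2}{2}c_{n,s}^2\omega_{n-1}(r_{k+1}/r_k)^{\frac{n-2}{2}}$, and the volume side is $(1+o(1))\,r_{k+1}^{\frac{n-2}{2}(\crit-1-q)}\int_{\rn}U_1^{q+1}$ (after establishing $U_1\in L^{q+1}$, which is where $q>\crit-2$ enters through a separate contradiction argument, not by ``requiring'' finiteness of the integral). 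Matching these two gives exactly the stated $K$.

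Third, the construction of the scales $(r_k)$ is glossed over as ``a bubble-tree separation argument''. In the paper this is done by an ODE-type argument: pass to $W(t)=e^{-\frac{n-2}{2}t}\overline u(e^{-t})$, observe via the conformal change of variable and the spherical Harnack inequality that $W''>0$ wherever $W$ is below a universal threshold, and use this convexity to show that the critical points of $w(r)=r^{\frac{n-2}{2}}\overline u(r)$ alternate between local minima $\tau_k$ (below threshold) and local maxima $r_k$ (above threshold), with $r_{k+1}=o(r_k)$. Without some substitute for this step, one does not obtain the discrete scales with the required separation, nor the sharp two-bubble expansion in the neck (the paper's Proposition~\ref{prop:sharp:lim}, proved by Green's representation with a careful five-piece domain decomposition) which is indispensable for controlling the errors in the Pohozaev balance.
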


\newpage\noindent The characterization of the three blow-up profiles is summarized in this table:

\begin{table}[ht]

\begin{center}
\begin{tabular}{|c|c|c|c|}
\hline
{\bf Type}  &  $\liminf_{x\to 0}|x|^{\frac{n-2}{2}}u(x)$ & $\limsup_{x\to 0}|x|^{\frac{n-2}{2}}u(x)$ \\

\hline
removable &   $0$ & $0$\\
\hline 
(CGS) &    $\in (0,\infty)$ & $\in (0,\infty)$ \\
\hline
(MB) &   $0$ & $\in (0,\infty)$\\
\hline 
(ND) &    $\infty$ & $\infty$\\
\hline
\end{tabular}
\bigskip
\end{center}
\end{table}

\noindent{\it Remark:} From the analysis viewpoint, it is more convenient to express the asymptotic behavior (MB) in the following equivalent form: for any $R>0$, for any $x\in B_{R r_k}(0)\setminus B_{R^{-1}r_{k+1}}(0)$, we have that
\begin{equation*}
u(x)=\left(1+\eps_k(x)\right)\left(c_{n,s}\left(\frac{r_{k+1}^{\frac{2-s}{2}}}{r_{k+1}^{2-s}+|x|^{2-s}}\right)^{\frac{n-2}{2-s}}+c_{n,s}\left(\frac{r_{k}^{\frac{2-s}{2}}}{r_{k}^{2-s}+|x|^{2-s}}\right)^{\frac{n-2}{2-s}}\right),
\end{equation*}
where $\lim_{k\to +\infty}\eps_k=0$ uniformly on $B_{R r_k}(0)\setminus B_{R^{-1}r_{k+1}}(0)$.

\medskip\noindent When $q=\crit-1$, the full nonlinearity is conformally invariant, and the situation is somehow different. Indeed, essentially, singular solutions develop only a (CGS) type profile. This is the object of the second theorem:

\begin{thm}\label{thm:2} Let $u\in C^\infty(\ballp)$ be a positive solution to \eqref{eq:u}. We assume that $q=\crit-1$. Then there exists $\mu_0(n,s)>0$ such that:

\smallskip\noindent$\bullet$ If $\mu>\mu_0(n,s)$, then $0$ is a removable singularity,\par
\smallskip\noindent$\bullet$ If $\mu=\mu_0(n,s)$, then 
\begin{enumerate}
\item Either $0$ is a removable singularity,
\item Or $\lim_{x\to 0}|x|^{\frac{n-2}{2}}u(x)=\left(\frac{2-s}{2\mu_0(n,s)}\right)^{\frac{n-2}{2s}}$.
\end{enumerate}
\smallskip\noindent$\bullet$ If $\mu<\mu_0(n,s)$, then 
\begin{enumerate}
\item Either $0$ is a removable singularity,
\item Or there exists $c_1,c_2>0$ such that 
\begin{equation*}
c_1|x|^{-\frac{n-2}{2}}\leq u(x)\leq c_2|x|^{-\frac{n-2}{2}}\hbox{ on }B_{1/2}(0)\setminus \{0\}.
\end{equation*}
\end{enumerate}
\end{thm}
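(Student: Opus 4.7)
The critical case $q=\crit-1$ has a decisive feature absent from Theorem~\ref{thm:1}: the full right-hand side $f(x,u):=|x|^{-s}u^{\crits-1}-\mu u^{\crit-1}$ is conformally invariant under the scaling $u_\lambda(x):=\lambda^{(n-2)/2}u(\lambda x)$ (both terms rescale with the same exponent). This is the engine that makes the Caffarelli--Gidas--Spruck strategy go through essentially verbatim. Assuming $0$ is non-removable, my first step is to adapt the moving-plane / Alexandrov-reflection argument of \cite{cgs}, in the weighted form of \cite{hlw}, to establish simultaneously the upper bound $u(x)\leq C|x|^{-(n-2)/2}$ near $0$ and the asymptotic radial symmetry $u(x)/\bar u(|x|)\to 1$ uniformly on $\mathbb{S}^{n-1}$, where $\bar u$ denotes the spherical mean. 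The extra term $-\mu u^{\crit-1}$ is conformally admissible at this exponent, and its sign is favorable for the sub/super-solution comparisons underlying the reflection.

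Armed with asymptotic radial symmetry, I apply the Emden--Fowler change of variables $w(t):=r^{(n-2)/2}\bar u(r)$ with $t:=-\ln r$. Conformal invariance forces $w$ to satisfy the \emph{autonomous} second-order ODE (up to an error vanishing as $t\to+\infty$):
$$w''=\alpha^2 w - w^{\crits-1}+\mu w^{\crit-1},\qquad\alpha:=\tfrac{n-2}{2}.$$
This is Hamiltonian with conserved energy $E:=\tfrac12(w')^2-V(w)$, where
$$V(w):=\tfrac{\alpha^2}{2}w^2-\tfrac{w^{\crits}}{\crits}+\tfrac{\mu w^{\crit}}{\crit}.$$
One computes $V'(w)=w(\alpha^2-F(w))$ with $F(w):=w^{\crits-2}-\mu w^{\crit-2}$; since $\crits-2<\crit-2$, $F$ attains a unique maximum on $(0,\infty)$ at $w^*=\bigl(\tfrac{2-s}{2\mu}\bigr)^{(n-2)/(2s)}$ with value $F(w^*)=\tfrac{s}{2}(w^*)^{\crits-2}$. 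The threshold $\mu_0(n,s)$ is then defined by the condition $F(w^*)=\alpha^2$, yielding explicitly
$$\mu_0(n,s)=\tfrac{2-s}{2}\Bigl(\tfrac{2s}{(n-2)^2}\Bigr)^{s/(2-s)},$$
and, crucially, at $\mu=\mu_0$ one has $w^*=\bigl(\tfrac{2-s}{2\mu_0}\bigr)^{(n-2)/(2s)}$, the constant in the theorem.

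The three regimes correspond to three phase-plane structures. If $\mu>\mu_0$, then $F<\alpha^2$ everywhere, so $V$ is strictly increasing on $(0,\infty)$ and the only equilibrium in $\{w\geq 0\}$ is the saddle $(0,0)$; a Poincar\'e--Bendixson analysis combined with the upper bound from Step~1 forces $w(t)\to 0$, i.e., the singularity is removable. If $\mu=\mu_0$, the unique positive equilibrium $w^*$ is degenerate ($V''(w^*)=0$, $V'''(w^*)>0$); the only $\omega$-limit candidates are $(0,0)$ and $(w^*,0)$, and a direct Hamiltonian computation on the critical level $\{H=-V(w^*)\}$ (where $(w')^2\sim\tfrac{1}{3}V'''(w^*)(w-w^*)^3$) shows both that orbits may approach $(w^*,0)$ asymptotically and that any such approach forces $w(t)\to w^*$, giving the stated dichotomy. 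If $\mu<\mu_0$, then $V$ has a local maximum at $w_-^*$ and a local minimum at $w_+^*>w_-^*$, which in phase plane yields a center at $(w_-^*,0)$ and a saddle at $(w_+^*,0)$; every bounded positive orbit must be trapped inside the separatrix through $(w_+^*,0)$, so $w(t)\in[a,b]$ for some $0<a<b<\infty$, and the two-sided bound $c_1|x|^{-(n-2)/2}\leq u\leq c_2|x|^{-(n-2)/2}$ follows by invoking asymptotic radial symmetry to transfer the bound on $\bar u$ to $u$.

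The main obstacle I anticipate is Step~1: carrying out the moving-plane argument for a sign-changing critical nonlinearity at a point singularity, since the reflected function must remain a sub-solution and the interaction between $|x|^{-s}u^{\crits-1}$ and $-\mu u^{\crit-1}$ in the comparison must be carefully controlled. A secondary difficulty is the degenerate case $\mu=\mu_0$: because the linearization at $(w^*,0)$ is trivial, ruling out exotic $\omega$-limit configurations and upgrading the convergence of $\bar u$ to a genuine limit for $u$ itself requires exploiting the sharp cubic inflection of $V$ at $w^*$ together with the quantitative form of the asymptotic radial symmetry obtained in Step~1.
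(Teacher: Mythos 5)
Your phase-plane calculus is correct: the formula you derive for $\mu_0(n,s)$ is algebraically identical to the one in the paper, the critical value $w^*=\bigl(\tfrac{2-s}{2\mu}\bigr)^{(n-2)/(2s)}$ is right, and the trichotomy of equilibria of $V$ matches the structure the paper finds for $g(v)=v^{-1}F'(v)$. However, your proposal hinges on a foundational step that you flag as ``the main obstacle'' and never actually establish, and without it the rest of the argument does not stand.

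The paper does \emph{not} prove asymptotic radial symmetry of the original singular solution $u$ by moving planes, and there is good reason for that: the nonlinearity $f(x,t)=|x|^{-s}t^{\crits-1}-\mu t^{\crit-1}$ is not of constant sign and is not monotone in $t$ (for large $t$ the $-\mu t^{\crit-1}$ term dominates), so the reflected comparison and Hopf-lemma steps in the Caffarelli--Gidas--Spruck / Hsia--Lin--Wang machinery are not directly available. Your proposal treats this as a technical annoyance to be absorbed; in fact it is the step whose absence makes the whole scheme collapse, because everything downstream (the ``autonomous ODE up to vanishing error'' and the Poincar\'e--Bendixson / Hamiltonian argument) presupposes it. A related unaddressed gap: even granting asymptotic radial symmetry, your ODE for $w(t)=r^{(n-2)/2}\bar u(r)$ is only approximately autonomous, and Poincar\'e--Bendixson and conservation-of-energy arguments do not transfer naively to perturbed Hamiltonian systems; controlling the error well enough to conclude $w(t)\to 0$ or $w(t)\to w^*$ requires quantitative estimates you have not supplied, and these are precisely the hard part in the degenerate case $\mu=\mu_0$ where the equilibrium is cubically flat.

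The paper takes a fundamentally different route that sidesteps both difficulties. First, the dichotomy ``removable singularity or two-sided bound $c_1|x|^{-(n-2)/2}\leq u\leq c_2|x|^{-(n-2)/2}$'' is established (Proposition~\ref{prop:q:crit}) not by symmetry but by exploiting that the Pohozaev integral $P^{(\crit-1)}_r(u)$ is \emph{exactly constant} in $r$ when $q=\crit-1$: if neither alternative held, one could find scales $t_i$ with $w(t_i)\to 0$ and $w'(t_i)=0$, rescale to a harmonic limit $\tfrac12(|x|^{2-n}+1)$, and compute the Pohozaev integral at scale $t_i$ to be simultaneously zero (since $w(t_i)\to 0$) and asymptotically positive, a contradiction. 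Second, given the two-sided bound, the paper rescales $u_i:=r_i^{(n-2)/2}u(r_i\cdot)$ and observes that \emph{limits} solve the exact equation on $\rnp$; it then classifies positive solutions of \eqref{added} on $\rnp$ (Proposition~\ref{prop:q:crit:lim}), where one \emph{does} have the sphere-averaging trick and a clean autonomous ODE, and where radial symmetry of the limit is either not needed (for $\mu\geq\mu_0$ the averaged convexity argument forces $v$ constant) or is available from the cited literature. The exact ODE analysis you carry out in the $(w,w')$ plane is therefore done in the paper too, but for the \emph{limit} equation, where it is rigorous --- not for the original solution, where it is only heuristic. If you want to salvage your approach, you would either need a genuine moving-plane theorem for this sign-changing nonlinearity, or you would need to replace Step~1 by a Pohozaev argument as the paper does.
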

\noindent The explicit value of $\mu_0(n,s)$ is
\begin{equation*}
\mu_0(n,s):=\frac{(2-s)s^{\frac{s}{2-s}}}{2^{\frac{2(1-s)}{2-s}}(n-2)^{\frac{2s}{2-s}}}.
\end{equation*}
The proof of Theorems \ref{thm:1} and \ref{thm:2} rely on various pointwise estimates and the use of Pohozaev-type identities. Indeed, our first task is to provide a pointwise control of $W$ in Section \ref{sec:pointwise:est}: this will enable us to show that either the profile is of type (ND) or $W$ is controled from above by a constant. When $\mu>0$ and $q\neq\crit-1$, the classical Pohozaev integral on a ball is not constant (see the definition in \eqref{def:P:invar}), but it has a limit (the asymptotic Pohozaev integral) when the radius of the ball goes to $0$. The value of the asymptotic Pohozaev integral differentiates the two profiles (CGS) and (MB) (respectively when it is positive or null). Here, it is to be noticed that the nonconstant Pohozaev integral generates the Multi-Bump profile (MB): in the conformally invariant equation $-\Delta U=|x|^{-s} U^{\crits-1}$, the Pohozaev integral is constant, which imposes a positive lower-bound for \eqref{key} and then a (CGS) profile (see Caffarelli-Gidas-Spruck \cite{cgs} or Korevaar-Mazzeo-Pacard-Schoen \cite{kmps}, see also Marques \cite{marques} for the case of a non-Euclidean metric). When there is no positive lower bound, the situation is more intricate and we perform a blow-up analysis in the spirit of Druet-Hebey-Robert \cite{dhr} to obtain the (MB) profile.

\medskip\noindent The article \cite{kmps} of Korevaar-Mazzeo-Pacard-Schoen was an important source of inspiration of this work. Concerning bibliographic references, apart from the articles already mentioned in the introduction, there is a huge litterature about the case of a sole convex nonlinear problem, that is for $-\Delta u=-\mu u^q$, with interior or boundary singularity: we refer to the classical monograph by V\'eron \cite{veron} and the more recent contribution \cite{PV} by Porretta-V\'eron. We also refer to the monograph \cite{cirstea:book} by the first author for an exhaustive study of such problems.

\medskip\noindent The paper is organized as follows. Section \ref{sec:pointwise:est} is devoted to the proof of a general pointwise estimate for solutions to \eqref{eq:u}. Some consequences of this estimate are provided in Sections \ref{sec:remove}, \ref{sec:compensate} and \ref{sec:aux}. In Section \ref{sec:poho}, we make a full study of solutions to the limiting equation \eqref{eq:U:rnp} on $\rnp$ and introduce the Pohozaev integral. The optimal control of solutions is proved in Sections \ref{sec:blowup:1} and \ref{sec:blowup:2} for the (MB) profile. This is used in Section \ref{sec:radii} to estimate the rescaling parameters associated to solutions to \eqref{eq:u}. Theorem \ref{thm:1} is proved in Section \ref{thm:1}. Section \ref{sec:q:crit} is devoted to the specific case $q=\crit-1$ and the proof of Theorem \ref{thm:2}. 

\medskip\noindent{\bf Notation.} In all the paper, $C$ will denote a generic positive constant, the value of which might change from one line to the other, potentially even in the same line. We denote $\omega_{n-1}:=|\partial B_1(0)|$ the volume of $\mathbb{S}^{n-1}$, the Euclidean unit $(n-1)-$sphere. 

\section{A first pointwise estimate for $u$}\label{sec:pointwise:est}

The aim of this section is to obtain upper bound estimates for any positive solution 
$u\in C^\infty(B_1(0)\setminus\{0\})$ of \eqref{eq:u}, namely $\limsup_{x\to 0} |x|^pu(x)<\infty$, where $p>0$ is given by \eqref{def:p}. We refer to Proposition~\ref{prop:bnd:u}, whose proof in \S\ref{apb} uses a contradiction argument and relies essentially on Lemma~\ref{lem:blowup} to be introduced shortly in \S\ref{sec:lem}.  
Before presenting the details, let us 
summarize several important facts in connection with $\Lambda:=\limsup_{x\to 0} |x|^{\frac{n-2}{2}}u(x)$ to be proved in Sections \ref{sec:pointwise:est}, \ref{sec:remove}, \ref{sec:compensate}:
\begin{enumerate}
\item  $\Lambda<\infty$
for every $1<q\leq \crits-1$ and $q=\crit-1$ (see Proposition~\ref{prop:bnd:u});
\item If $\crits-1<q<\crit-1$, then 
\begin{itemize}
\item $\Lambda<\infty$ if and only if $\lim_{x\to 0}|x|^s u(x)^{q-(\crits-1)}=0$ (by Corollary~\ref{co1});
\item If $\Lambda=\infty$, then $\lim_{x\to 0}|x|^s u(x)^{q-(\crits-1)}=\mu^{-1}$ (see Proposition~\ref{prop:lim:1}).
\end{itemize}
\item If $q>\crit-1$, then $\Lambda=0$. Moreover, zero is a removable singularity for $u$ provided that $\Lambda=0$ (see Proposition~\ref{prop:remove} and Corollary~\ref{coro:remove}).
\end{enumerate}

\subsection{A general Lemma}\label{sec:lem}

Let $u\in C^\infty(B_1(0)\setminus\{0\})$ be a positive solution to \eqref{eq:u}. For $\epsilon>0$, we define
\[  d_\eps(x):=\max\{|x|-\eps, 0\}\hbox{ for all }x\in\mathbb{R}^n.\]
Let $a>0$ and $b\in\rr$ be fixed. 
For $\eps\in (0,1/2)$, we define $\we$ on $B_1(0)$ as follows 
\begin{equation} \label{a0} \left\{ \begin{aligned}
&\we(x):=d_\eps(x)^a|x|^b u(x)\quad \mbox{for every } x\in B_1(0)\setminus\{0\},\\
& \we(0)=0.
\end{aligned} \right.
\end{equation} 
Since $\we\in C(\overline{B_{1/2}(0)})$, we see that there exists $\xe\in \overline{B_{1/2}(0)}\setminus\{0\}$ such that
\begin{equation} \label{a1} \max_{x\in \overline{B_{1/2}(0)}}\we(x)=\we(\xe)>0.\end{equation}
Up to a subsequence, we assume that 
\begin{equation}\tag{H1} \label{H1} \lim_{\eps\to 0}\we(\xe)=+\infty, \end{equation}
then since $u$ is smooth on $B_1(0)\setminus\{0\}$, we infer that there exists $\eps_1>0$ such that 
\begin{equation} \label{aa} \lim_{\epsilon\to 0}|\xe|=0\quad \mbox{and }\  d_\eps(\xe)=|\xe|-\eps>0\ \ \mbox{for every }\eps\in (0,\eps_1). \end{equation}
Our next result is essential for proving the {\em a priori} estimates in Proposition~\ref{prop:bnd:u}. 

\begin{lem}\label{lem:blowup} Let $a>0$ and $b\in \rr$. For $0<\eps<1/2$, we define $\we$ and $\xe$ as in \eqref{a0} and \eqref{a1}, respectively. 
Suppose that, up to a subsequence,  \eqref{H1} holds, $\lim_{\eps\to 0}u(\xe)=+\infty$ and for a family 
$(\lae)_\eps$ of positive numbers converging to zero as $\eps\to 0$, we have 
\begin{equation} \tag{H2} \label{H2} \lim_{\eps\to 0}\frac{|\xe|-\eps}{\lae}=+\infty. \end{equation}
We assume that there exist non-negative numbers $\alpha$ and $ \beta$ such that
\begin{equation} \label{H3} \tag{H3} 
\lim_{\eps\to 0}\frac{\lae^2u(\xe)^{\crits-2}}{|\xe|^s}=\alpha\hbox{ and }\lim_{\eps\to 0}\lae^2 u(\xe)^{q-1}=\beta.
\end{equation}
Then there exists $U\in C^\infty(\rn)$ such that
\begin{equation} \label{def:u} \left\{\begin{aligned}
-&\Delta U=\alpha U^{\crits-1}-\beta \mu U^q&&\hbox{ in }\rn,&\\
&0<U(x)\leq U(0)=1&&\hbox{ for all }x\in\rn.&
\end{aligned}\right.\end{equation}
\end{lem}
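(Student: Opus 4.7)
\medskip\noindent\textbf{Proof proposal.} The natural strategy is the standard rescaling/blow-up argument, centered at the max point $\xe$ at scale $\lae$. Define
\[
\tue(y):=\frac{u(\xe+\lae y)}{u(\xe)},
\]
for $y$ in the domain $\Omega_\eps:=\{y\in\rn\,:\,\xe+\lae y\in B_1(0)\setminus\{0\}\}$. By construction $\tue(0)=1$. Using \eqref{aa} together with (H2), we have $|\xe|/\lae\to+\infty$ and $(1-|\xe|)/\lae\to+\infty$, so for every compact $K\subset\rn$ we have $K\subset\Omega_\eps$ once $\eps$ is small enough, and moreover $\xe+\lae y\neq 0$ on $K$.

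The first step is to write down the PDE for $\tue$. A direct computation from \eqref{eq:u} yields
\[
-\Delta\tue(y)=\frac{\lae^2 u(\xe)^{\crits-2}}{|\xe|^s}\left(\frac{|\xe|}{|\xe+\lae y|}\right)^{\!s}\tue(y)^{\crits-1}-\mu\,\lae^2 u(\xe)^{q-1}\tue(y)^q.
\]
By (H2), $\lae/|\xe|\to 0$, hence $|\xe+\lae y|/|\xe|\to 1$ uniformly on compact subsets of $\rn$; combined with (H3), the coefficients in front of the two nonlinearities converge locally uniformly to $\alpha$ and $\beta\mu$, respectively. So the limiting equation is formally the one announced in \eqref{def:u}.

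The second, and key, step is to obtain a locally uniform bound on $\tue$. Since $\xe$ realizes the maximum of $\we$ on $\overline{B_{1/2}(0)}$, we have $\we(\xe+\lae y)\le\we(\xe)$, whence
\[
\tue(y)\le\left(\frac{d_\eps(\xe)}{d_\eps(\xe+\lae y)}\right)^{\!a}\!\left(\frac{|\xe|}{|\xe+\lae y|}\right)^{\!b}.
\]
For $y$ in a fixed compact set, the triangle inequality $\bigl||\xe+\lae y|-|\xe|\bigr|\le\lae|y|$ together with \eqref{aa} gives
\[
\frac{d_\eps(\xe+\lae y)}{d_\eps(\xe)}=1+\frac{|\xe+\lae y|-|\xe|}{|\xe|-\eps}=1+O\!\left(\frac{\lae|y|}{|\xe|-\eps}\right),
\]
which tends to $1$ by (H2); similarly $|\xe+\lae y|/|\xe|\to 1$. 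Hence $\tue$ is locally bounded by a quantity that tends to $1$ as $\eps\to 0$, uniformly on compacts.

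The third step is routine. With $\tue$ locally uniformly bounded and the source terms locally uniformly bounded by (H3), standard elliptic regularity ($L^p$ theory followed by Schauder estimates) gives $\tue$ bounded in $C^{2,\gamma}_{\rm loc}(\rn)$. A diagonal extraction produces $U\in C^2(\rn)$ with $\tue\to U$ in $C^2_{\rm loc}(\rn)$; passing to the limit in the PDE yields $-\Delta U=\alpha U^{\crits-1}-\beta\mu U^q$ in $\rn$, and passing to the limit in $\tue(0)=1$ and in the pointwise bound above yields $U(0)=1$ and $U\le 1$ on $\rn$. Since $U\ge 0$, $U(0)=1>0$, and $U$ solves a semilinear elliptic equation with locally bounded coefficients, the strong maximum principle (equivalently, Harnack) gives $U>0$ everywhere, and elliptic regularity bootstraps $U$ to $C^\infty(\rn)$.

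\medskip\noindent The main obstacle — really the only delicate point — is justifying the locally uniform convergence to $1$ of both the geometric ratios $|\xe+\lae y|/|\xe|$ and $d_\eps(\xe+\lae y)/d_\eps(\xe)$; this is precisely where hypothesis (H2) enters in an essential way, controlling the scale $\lae$ against the distance $|\xe|-\eps$ from $\xe$ to the ``inflated'' singularity $\partial B_\eps(0)$. Everything else is bookkeeping within the standard blow-up scheme.
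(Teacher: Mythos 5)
Your proposal is correct and follows essentially the same blow-up scheme as the paper's proof: the same rescaling $\tue(y)=u(\xe+\lae y)/u(\xe)$, the same use of the maximality of $\we$ at $\xe$ to control $\tue$ on compacts via the ratios $d_\eps(\xe+\lae y)/d_\eps(\xe)$ and $|\xe+\lae y|/|\xe|$ (which tend to $1$ thanks to (H2)), and the same elliptic-theory limit plus strong (Hopf) maximum principle to get $U>0$. The only cosmetic difference is that you pass the bound to the limit directly, whereas the paper phrases it via an explicit $(1+\eta)^{a+|b|}$ threshold and sends $\eta\to0$ afterward; the content is identical.
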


\smallskip\noindent{\it Proof of Lemma \ref{lem:blowup}:} We define a family of functions $\ue$ as follows
\begin{equation} \label{a3} 
\ue(x):=\frac{u(\xe+\lae x)}{u(\xe)}\quad \mbox{ for all }x\in \frac{(B_1(0)\setminus\{0\})-\xe}{\lae}. \end{equation}

\medskip\noindent We claim that for every $R>0$ and every $\eta\in (0,1)$, there exists 
$\epsilon(R,\eta)>0$ such for any $\eps\in (0,\eps(R,\eta))$, we can define $\ue$ in $B_R(0)$ and  
\begin{equation}\label{bnd:ue:C}
0<\ue(x)\leq (1+\eta)^{a+|b|}\quad \mbox{for all }x\in B_R(0).
\end{equation}
We prove the claim. For every $x\in B_R(0)$ and every $\eps>0$, we have
\[ \left\{
\begin{aligned}
&  1-\frac{\lambda_\eps}{|x_\eps|} R\leq \frac{|x_\eps+\lambda_\eps x|}{|x_\eps|} \leq 1+\frac{\lambda_\eps}{|x_\eps|} R,\\
& 1-\frac{\lambda_\eps}{|x_\eps|-\eps} R\leq \frac{|x_\eps+\lambda_\eps x|-\eps}{|x_\eps|-\eps}\leq 1+\frac{\lambda_\eps}{|x_\eps|-\eps}R. 
\end{aligned} \right. 
\]
From \eqref{H2}, we find that $\lim_{\eps\to 0} \lambda_\eps/|x_\eps|=\lim_{|\eps|\to 0} \lambda_\eps/(|x_\eps|-\eps)=0$. 
Hence, for every $\eta>0$, there exists $\eps(R,\eta)\in (0,\eps_1)$ such that
\begin{equation}\label{bnd:d:1}
\frac{1}{1+\eta}\leq \frac{|\xe+\lae x|-\eps}{|\xe|-\eps}\leq 1+\eta,\quad \frac{1}{1+\eta}\leq \frac{|\xe+\lae x|}{|\xe|}\leq 1+\eta 
\end{equation}
for all $x\in B_R(0)$ and all $\eps\in (0,\eps(R,\eta))$. 
Therefore, $\xe+\lae x\in B_{1/2}(0)$ and $\ue(x)$ is well defined, so that $\we(\xe+\lae x)\leq \we(\xe)$. This yields  
$$d_\eps(\xe+\lae x)^a|\xe+\lae x|^b u(\xe+\lae x)\leq d_\eps(\xe)^a|\xe|^b u(\xe)$$
for all $x\in B_R(0)$ and $\epsilon\in (0,\epsilon(R,\eta))$. Then, by \eqref{aa}, \eqref{a3} and \eqref{bnd:d:1}, we get \eqref{bnd:ue:C}. This proves the claim.

\medskip\noindent We fix $R>0$ and $\eta\in (0,1)$. It follows from \eqref{bnd:ue:C} that 
$\ue$ is uniformly bounded on $B_R(0)$ with respect to $\eps>0$ small enough.
Since $u$ is a positive solution of \eqref{eq:u}, we see that $\ue$ satisfies
\begin{equation}\label{eq:ue}
-\Delta\ue=\frac{\lae^2u(\xe)^{\crits-2}}{|\xe|^s}\frac{\ue^{\crits-1}}{\left|\frac{\xe}{|\xe|}+\frac{\lae }{|\xe|}x\right|^s}-\lae^2 u(\xe)^{q-1}\mu \ue^{q}\quad \mbox{in } B_R(0). 
\end{equation}
Thus using \eqref{H3}, \eqref{bnd:ue:C} and standard elliptic theory (see, for instance,
Gilbarg--Trudinger \cite{gt}), we conclude that there exists $U\in C^2(\rn)$ such that 
\[ \ue\to U\ \ \mbox{in } C^2_{\rm loc}(\rn)\ \ \mbox{as }\eps\to 0,\] where $U$ is a non-negative solution of 
\[ -\Delta U=\alpha U^{\crits-1}-\beta \mu U^q \quad \mbox{ in }\rn. \]
Moreover, letting $\eps\to 0$ and then $\eta\to 0$ in \eqref{bnd:ue:C}, we find that $0\leq U(x)\leq 1$ for all $x\in\rn$. Since $U(0)=\lim_{\eps\to 0}\ue(0)=1$, it follows from Hopf's maximum principle that $U>0$ in $\rn$. Therefore $U$ satisfies \eqref{def:u}. This ends the proof of Lemma \ref{lem:blowup}.\hfill$\Box$

\subsection{A priori bounds} \label{apb}
For convenience, we define $p$ as follows:
\begin{equation} \label{def:p} p:=\left\{\begin{aligned}
& \frac{n-2}{2} && \hbox{ if }1<q\leq \crits-1,&\\
& \frac{s}{q-(\crits-1)} && \hbox{ if }\crits-1<q<\crit-1,&\\
&\frac{2}{q-1} && \hbox{ if }q\geq \crit-1.&
\end{aligned}\right.\end{equation}
This subsection is essentially devoted to the proof of the following result.
\begin{prop}\label{prop:bnd:u} Let $u\in C^\infty(B_1(0)\setminus\{0\})$ be a positive solution of \eqref{eq:u} and let $p$ be given by \eqref{def:p}. Then there exists a positive constant 
$C$ such that 
\begin{equation}\label{bnd:u}
u(x)\leq C |x|^{-p} \quad \mbox{ for all }x\in B_{1/2}(0)\setminus \{0\}. 
\end{equation}
\end{prop}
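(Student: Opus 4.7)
I argue by contradiction through the scaling Lemma~\ref{lem:blowup}. Assume the desired bound fails: there exists a sequence $y_k\to 0$ in $\ballp$ with $|y_k|^p u(y_k)\to +\infty$. For each of the three regimes of \eqref{def:p}, I will pick real numbers $a>0$ and $b$ with $a+b=p$ (specified below), form $\we$ as in \eqref{a0}, and let $\xe$ maximize $\we$ on $\overline{B_{1/2}(0)}$. Since $\we(x)\to|x|^p u(x)$ pointwise as $\eps\to 0^+$ for each fixed $x\ne 0$, the failure of the estimate forces $\we(\xe)\to +\infty$, which is exactly \eqref{H1}. Because $d_\eps(\cdot)$ and $|\cdot|$ are bounded on $\overline{B_{1/2}(0)}$, this forces $u(\xe)\to+\infty$; because $u$ is smooth on every annulus $\{\delta\le|x|\le 1/2\}$, this also forces $\xe\to 0$, so \eqref{aa} is in force.

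\textbf{Choice of scale and reduction to two limiting equations.} In the first regime $1<q\le \crits-1$ (so $p=(n-2)/2$), I take $a=(n-2)/(2-s)$, $b=-s(n-2)/(2(2-s))$ and
\[
\lae:=|\xe|^{s/2}u(\xe)^{-(2-s)/(n-2)}.
\]
The relation $\lae=o(d_\eps(\xe))$ is literally equivalent to $u(\xe)\gg d_\eps(\xe)^{-a}|\xe|^{-b}$, which is \eqref{H1}, so \eqref{H2} follows automatically; a direct computation then gives $\alpha=1$ and $\beta=\lim|\xe|^s u(\xe)^{q-\crits+1}=0$ (either $u(\xe)\to\infty$ with negative exponent, or $|\xe|^s\to 0$ when $q=\crits-1$). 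Lemma~\ref{lem:blowup} thus yields $U\in C^2(\rn)$ with $0<U\le U(0)=1$ solving $-\Delta U=U^{\crits-1}$ on $\rn$. In the remaining regimes $q>\crits-1$, I take $a=p$, $b=0$ and $\lae:=u(\xe)^{-(q-1)/2}$, so that $\beta=1$ automatically. The elementary identity
\[
s(q-1)-2(q+1-\crits)=(2-s)(\crit-1-q)
\]
shows $p\ge 2/(q-1)$ (strictly if $q<\crit-1$); combined with \eqref{H1} and $d_\eps(\xe)\le|\xe|$, this delivers both \eqref{H2} and $u(\xe)^{q+1-\crits}|\xe|^s\to+\infty$, i.e.\ $\alpha=0$. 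Thus Lemma~\ref{lem:blowup} yields $U\in C^2(\rn)$ with $0<U\le U(0)=1$ solving $-\Delta U=-\mu U^q$ on $\rn$.

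\textbf{Liouville contradictions.} In the first case, $\crits-1=(n+2-2s)/(n-2)$ is strictly subcritical in $\rn$ (because $0<s<2$), so the Gidas--Spruck Liouville theorem excludes the existence of any positive entire solution of $-\Delta U=U^{\crits-1}$, contradicting the $U$ we obtained. In the other two cases, $\Delta U=\mu U^q\ge 0$ makes $U$ a bounded positive subharmonic function on $\rn$, hence constant by Liouville; but then $0=\Delta U(0)=\mu U(0)^q>0$, again a contradiction. In every regime \eqref{H1} is ruled out, which proves the a priori estimate. The principal technical obstacle is the exponent bookkeeping that selects $(a,b,\lae)$ so that \eqref{H1} by itself forces both \eqref{H2} and the desired limiting values of $(\alpha,\beta)$; once this is arranged, the limit equation always falls into one of the two ``pure'' regimes (subcritical Sobolev--Hardy or subharmonic), bypassing any need for a Liouville theorem on the mixed-sign equation $-\Delta U=\alpha U^{\crits-1}-\mu U^q$.
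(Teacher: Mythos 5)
Your proof follows the paper's closely: you use the same $(a,b,\lambda_\eps)$ in both regimes, the same invocation of Lemma~\ref{lem:blowup}, and the same reduction to a pure-sign limiting equation in each case. One small remark on the bookkeeping for $q>\crits-1$: to conclude $\alpha=0$ you implicitly need $p\ge s/(q-(\crits-1))$ in addition to the explicitly verified $p\ge 2/(q-1)$; both follow at once from the paper's observation $p=\max\{2/(q-1),\,s/(q-(\crits-1))\}$ when $q>\crits-1$, and your identity is precisely the comparison between those two quantities, so this is only a matter of wording.

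The one genuine flaw is the Liouville claim you use to rule out the limit equation $-\Delta U=-\mu U^q$. A bounded positive subharmonic $C^2$ function on $\rn$ need \emph{not} be constant when $n\ge 3$. For $n\ge 4$ the function $w(x)=2-(1+|x|^2)^{-1}$ is bounded, positive, smooth and satisfies $\Delta w(x)=\bigl(2n+(2n-8)|x|^2\bigr)(1+|x|^2)^{-3}>0$ everywhere; and for every $n\ge 3$ one can take $w:=1+\sup u_0-u_0$ where $u_0$ is the Newtonian potential of a nonnegative, nontrivial bump function, which is again bounded, positive, smooth and strictly subharmonic on an open set. The Liouville theorem for bounded-above subharmonic functions requires $n\le 2$. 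Fortunately, no Liouville theorem is needed at this step: Lemma~\ref{lem:blowup} already produces $U$ normalized so that $U(0)=\max_{\rn}U=1$, hence $-\Delta U(0)\ge 0$ at the interior maximum; but the limit equation gives $-\Delta U(0)=-\mu U(0)^q=-\mu<0$, a contradiction. This is exactly the paper's own one-line argument, and it cleanly replaces the erroneous Liouville step.
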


\begin{proof}[Proof of Proposition \ref{prop:bnd:u}]  We take inspiration in Korevaar-Mazzeo-Pacard-Schoen \cite{kmps} where a similar upper bound was proved for $s=\mu=0$. We take $a>0$ and $b\in \rr$ be such that
\begin{equation}\label{choice:a:b:bis}
 \left\{\begin{array}{ll}
\displaystyle a:=\frac{2}{\crits-2}\hbox{ and }b:=-\frac{s}{\crits-2}& \hbox{ if }1<q\leq\crits-1,\\
\displaystyle a:=p  \ \ \mbox{and } b:=0&\hbox{ if }q>\crits-1.\\
\end{array}\right.
\end{equation}
Notice that $a>0$ and $a+b=p>0$, where $p$ is defined as in \eqref{def:p}. 
For any $\eps\in (0,1/2)$, we define $\we$ as in \eqref{a0} with $a$ and $b$ as above. 
To prove \eqref{bnd:u}, our objective is to bound $\we$ uniformly. We argue by contradiction. Let $\xe$ be given by \eqref{a1}, and assume that \eqref{H1} holds, that is $\lim_{\eps\to 0}\we(\xe)=+\infty$. Using \eqref{aa}, we find that $\we(\xe)\leq |\xe|^{a+b} u(\xe)$ and thus $\lim_{\eps\to 0} u(\xe)=+\infty$. This implies that $\lae\to 0$ as $\eps\to 0$, where 
for every $\eps\in (0,1/2)$, we define $\lambda_\eps$ as follows
\begin{equation} \label{lambda-eps}
\lambda_\eps=\left\{ 
\begin{array}{ll}
|x_\eps|^{\frac{s}{2}} u(x_\eps)^{-\frac{\crits-2}{2}}& \hbox{ if }1<q\leq\crits-1,\\
 u(x_\eps)^{-\frac{q-1}{2}}&\hbox{ if } q>\crits-1.
\end{array}\right.
\end{equation}

\noindent{\bf Case 1: $q> \crits-1$}. 
By \eqref{a0} and \eqref{H1}, we have $\lim_{\eps\to 0}(|\xe|-\eps)^{p}u(\xe)=+\infty$ since $a=p$ and $b=0$. 
But $p=\max\left\{\frac{2}{q-1},\frac{s}{q-(\crits-1)}\right\}$ if $q>\crits-1$ so that
$$ \lim_{\eps\to 0} (|\xe|-\eps)^{\frac{2}{q-1}}u(\xe)=+\infty\quad \text{and} \quad \lim_{\eps\to 0} (|\xe|-\eps)^{\frac{s}{q-(\crits-1)}}u(\xe)=+\infty. $$
Thus, \eqref{H2} holds for $\lae$ defined by \eqref{lambda-eps}, whereas \eqref{H3} holds for $\alpha=0$ and $\beta=1$. 
By applying Lemma \ref{lem:blowup}, we conclude that there exists $U\in C^\infty(\rn)$ such that 
\[ \left\{  \begin{aligned}
 -& \Delta U=-\mu U^q && \mbox{in } \rn&&\\
& 0<U(x)\leq U(0)=1 && \mbox{for all } x\in \rn.
\end{aligned}\right. \] Since $0$ is a point of maximum for $U$, we have that $-\Delta U(0)\geq 0$, which is a contradiction. Thus, \eqref{H1} cannot hold in Case 1. 

\smallskip
\noindent{\bf Case 2: $1<q\leq\crits-1$}. 
From \eqref{H1} and \eqref{aa}, jointly with \eqref{choice:a:b:bis}, we find that 
$$ \lim_{\eps\to 0}(|\xe|-\eps) |\xe|^{-\frac{s}{2}} \,[u(\xe)]^{\frac{\crits-2}{2}}=+\infty,$$ which proves \eqref{H2} for $\lae$ given by \eqref{lambda-eps}. 
Moreover, $\lambda_\eps$ satisfies
\begin{equation} \label{new1} \frac{\lae^2 u(\xe)^{\crits-2}}{|\xe|^s}=1\hbox{ and }\lae^2 u(\xe)^{q-1}=|\xe|^s u(\xe)^{q-(\crits-1)}.\end{equation}
Since $q\leq \crits-1$ and $\lim_{\eps\to 0} u(\xe)=+\infty$, from \eqref{new1}, we see that \eqref{H3} holds with $\alpha=1$ and $\beta=0$.
Thus, by Lemma \ref{lem:blowup}, there exists $U\in C^\infty(\rn)$ such that 
\[ \left\{ \begin{aligned}
-&\Delta U=U^{\crits-1}&& \mbox{in } \rn, &\\ 
& 0<U(x)\leq U(0)=1 && \mbox{for every } x\in \rn.
\end{aligned} \right.\]
 But this is impossible from Caffarelli--Gidas--Spruck \cite{cgs} since $\crits<\crit$ (we use that $s>0$). Then \eqref{H1} does not hold, which ends Case 2.

\medskip\noindent Hence, in both cases, there exists $C>0$ such that $\we(x)\leq C$ for all $x\in B_{1/2}(0)\setminus\{0\}$ and $\eps\in (0,1/2)$. Letting $\eps\to 0$ yields 
Proposition \ref{prop:bnd:u}.\end{proof}

\begin{coro} \label{co1} If $u\in C^\infty(B_1(0)\setminus\{0\})$ is a positive solution of \eqref{eq:u} such that 
\neweq{oo} \lim_{|x|\to 0} |x|^s u(x)^{q-(\crits-1)}=0,
\endeq
then there exists a positive constant $C$ such that 
\neweq{oo1}
u(x)\leq C |x|^{-\frac{n-2}{2}} \quad \mbox{in } B_{1/2}(0)\setminus\{0\}. 
\endeq 
\end{coro}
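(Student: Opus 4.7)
\textbf{Proof plan for Corollary \ref{co1}.} The strategy is to repeat the argument of Case 2 of Proposition \ref{prop:bnd:u} but remove the restriction $q\le \crits-1$ by using the extra hypothesis \eqref{oo} to kill the $\mu u^q$ term in the blow-up limit. Note that when $1<q\le\crits-1$, the bound \eqref{oo1} already follows directly from Proposition \ref{prop:bnd:u} (since $p=(n-2)/2$ there), so the only new case is $q>\crits-1$, on which I focus below.

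I would choose $a:=2/(\crits-2)>0$ and $b:=-s/(\crits-2)$, exactly as in Case 2 of Proposition \ref{prop:bnd:u}. Observe that $a+b=(2-s)/(\crits-2)=(n-2)/2$. For $\eps\in(0,1/2)$, I define $\we$ and $\xe$ via \eqref{a0}--\eqref{a1}. To prove \eqref{oo1}, it suffices to show that $\we$ is uniformly bounded on $\overline{B_{1/2}(0)}$ for all small $\eps$: indeed, letting $\eps\to 0$ in $\we(x)\le C$ yields $|x|^{(n-2)/2}u(x)\le C$ away from the origin. I argue by contradiction and assume \eqref{H1}. Since $\we(\xe)\le |\xe|^{(n-2)/2}u(\xe)$ and $|\xe|\le 1/2$, assumption \eqref{H1} forces $u(\xe)\to+\infty$, and then also $|\xe|\to 0$ as in \eqref{aa}.

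Set $\lae:=|\xe|^{s/2}u(\xe)^{-(\crits-2)/2}$ as in \eqref{lambda-eps} of Case 2. Then $\lae^2 u(\xe)^{\crits-2}/|\xe|^s=1$ and $\lae^2 u(\xe)^{q-1}=|\xe|^s u(\xe)^{q-(\crits-1)}$, which tends to $0$ as $\eps\to 0$ by hypothesis \eqref{oo}. Hence \eqref{H3} holds with $\alpha=1$ and $\beta=0$; this is the crucial point where \eqref{oo} enters and where the proof differs from Case 2 of Proposition \ref{prop:bnd:u}. To check \eqref{H2}, raising the divergence $w_\eps(x_\eps)=(|\xe|-\eps)^{a}|\xe|^{b}u(\xe)\to+\infty$ to the power $(\crits-2)/2$ and rearranging yields exactly $(|\xe|-\eps)/\lae\to+\infty$. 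From $\lae^2 u(\xe)^{q-1}\to 0$ and $u(\xe)\to+\infty$ one also gets $\lae\to 0$.

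Lemma \ref{lem:blowup} then produces $U\in C^\infty(\rn)$ with $-\Delta U=U^{\crits-1}$ on $\rn$ and $0<U\le U(0)=1$. Since $s>0$, we have $\crits<\crit$, so by Caffarelli--Gidas--Spruck \cite{cgs} no such solution exists. The contradiction shows that \eqref{H1} fails, so $\we$ is uniformly bounded, yielding \eqref{oo1}. The only substantive step is the verification that \eqref{oo} gives $\beta=0$ in \eqref{H3}; everything else is a transcription of the proof of Proposition \ref{prop:bnd:u}, Case 2.
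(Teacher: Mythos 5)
Your proof is correct and follows essentially the same approach as the paper, which simply says to proceed as in Case 2 of Proposition \ref{prop:bnd:u} and to use \eqref{new1} together with \eqref{oo} to obtain $\beta=0$. You have spelled out carefully all the steps the paper leaves implicit (including verifying (H2) from the divergence of $\we(\xe)$ and the reduction to the case $q>\crits-1$), and the computations check out.
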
 

\smallskip\noindent{\it Proof of Corollary \ref{co1}.} We proceed as in Case 2 in the proof of Proposition~\ref{prop:bnd:u}. 
The only change is that $\beta=0$ in \eqref{H3} follows here from \eqref{new1} and \eqref{oo}. \hfill$\Box$

\section{Removable singularities}\label{sec:remove}

\begin{prop}\label{prop:remove} Let $u\in C^\infty(B_1(0)\setminus\{0\})$ be a positive solution to \eqref{eq:u} such that 
\begin{equation}\label{lim:u:0}
\lim_{x\to 0}|x|^{\frac{n-2}{2}}u(x)=0.
\end{equation}
Then $0$ is a removable singularity for $u$.
\end{prop}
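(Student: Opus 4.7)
The plan is to show that $u$ is a distributional solution of \eqref{eq:u} on the whole ball $B_{1/2}(0)$ (with no Dirac concentration at the origin), and then invoke elliptic regularity to produce a H\"older extension of $u$ across $0$. The hypothesis provides the starting pointwise bound $u(x) \leq C|x|^{-(n-2)/2}$ near $0$ (either from Corollary~\ref{co1} or directly from $W(x) \to 0$).

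The first step is to extract a companion gradient bound of the form $|\nabla u(x)| = o(|x|^{-n/2})$. I would rescale to the fixed annulus $A := \{1/2 < |y| < 2\}$ by setting $u_r(y) := r^{(n-2)/2} u(ry)$. The hypothesis forces $u_r \to 0$ uniformly on $A$, and $u_r$ solves the equation
\[
-\Delta u_r = |y|^{-s} u_r^{\crits-1} - \mu r^{\delta} u_r^q, \quad \delta := \tfrac{(n+2)-(n-2)q}{2},
\]
whose right-hand side is itself $o(1)$ uniformly on $A$ as $r\to 0$. Standard interior Schauder estimates on $\{3/4 < |y| < 3/2\}$ then yield $\|u_r\|_{C^1} \to 0$, which unwinds to the stated gradient decay.

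With both $u$ and $\nabla u$ controlled near $0$, I would test \eqref{eq:u} against $\phi \in C_c^\infty(B_{1/2}(0))$ on $B_{1/2}(0) \setminus B_\eps(0)$ and pass to $\eps \to 0$. The $\partial B_\eps$ surface integrals decompose into a piece of size $\|u\|_{L^\infty(\partial B_\eps)} \eps^{n-1} = o(\eps^{n/2})$ and one of size $\|\nabla u\|_{L^\infty(\partial B_\eps)} \eps^{n-1} = o(\eps^{n/2-1})$, both of which vanish since $n \geq 3$. This extends the equation to a distributional identity on all of $B_{1/2}(0)$, once one checks that both right-hand side terms are in $L^1_{\mathrm{loc}}$: the factor $|x|^{-s} u^{\crits-1}$ is bounded by $C|x|^{-(n+2)/2}$, integrable for $n\geq 3$, and $\mu u^q$ is integrable for $q < \crit$ from the bound on $u$, with the remaining range of $q$ handled by a preliminary iteration that exploits the sign of $-\mu u^q$ to improve the pointwise estimate strictly below $|x|^{-(n-2)/2}$.

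Elliptic regularity on the extended equation (an $L^p$/Moser bootstrap followed by Schauder) then yields $u\in C^{0,\alpha}_{\mathrm{loc}}(B_{1/2}(0))$, i.e.\ $0$ is removable. I expect the main obstacle to be the gradient estimate of the second step in low dimension, where the boundary flux $\|\nabla u\|_{L^\infty(\partial B_\eps)} \eps^{n-1} = o(\eps^{n/2-1})$ is only borderline when $n = 3$ and genuinely relies on the ``little-o'' in the hypothesis, not merely a ``$O$'' bound.
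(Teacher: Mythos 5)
Your approach (pass to a distributional identity across $0$ via gradient decay and boundary-flux estimates, then regularize) is plausible in outline, but there is a genuine gap at the final regularity step that is in fact the heart of the paper's proof. The starting bound $u(x)\leq C|x|^{-\frac{n-2}{2}}$ gives $|x|^{-s}u^{\crits-1}\leq C|x|^{-\frac{n+2}{2}}$. You correctly observe that this is in $L^1_{\rm loc}$, but it is \emph{not} in $L^p$ for any $p>n/2$: that would require $\frac{n+2}{2}p<n$, i.e. $p<\frac{2n}{n+2}$, which is always $<n/2$ for $n\geq 3$. With RHS only in $L^1$, passing to a distributional solution of $-\Delta u=f$ does not imply $u$ is bounded, let alone H\"older. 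Nor can the semilinear structure rescue this directly: if one tries to iterate $u\leq C|x|^{-a}$ via $-\Delta u\leq |x|^{-s}u^{\crits-1}$ and the Newtonian potential, the exponent $a'=s+a(\crits-1)-2$ satisfies $a'<a$ exactly when $a<\frac{n-2}{2}$, so $a=\frac{n-2}{2}$ is a \emph{fixed point} and the bootstrap cannot start from the ``big-$O$'' bound alone. The ``little-$o$'' hypothesis is what breaks this criticality, but your proposal uses it only to kill the $\partial B_\eps$ boundary terms, not to improve the pointwise rate.

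The step you wave away in one clause (``a preliminary iteration \dots to improve the pointwise estimate strictly below $|x|^{-(n-2)/2}$'', presented as a minor technicality for large $q$) is precisely the central lemma missing from your argument, and it is needed for \emph{all} $q$, driven by the term $|x|^{-s}u^{\crits-1}$ rather than by $\mu u^q$. In the paper this is done by a barrier/comparison argument: writing $L\phi:=-\Delta\phi-\frac{u^{\crits-2}}{|x|^s}\phi+\mu u^{q-1}\phi$ (so $Lu=0$), one computes
\[
L(|x|^{-\alpha})=|x|^{-\alpha-2}\left(\alpha(n-2-\alpha)-\bigl(|x|^{\frac{n-2}{2}}u\bigr)^{\crits-2}+\mu|x|^2 u^{q-1}\right)>0
\]
near $0$ for every $\alpha\in(0,n-2)$, where positivity uses exactly $|x|^{\frac{n-2}{2}}u(x)\to 0$ (and incidentally the favorable sign of $+\mu u^{q-1}$, so no restriction on $q$ is needed). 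Comparing $u$ against $\mathcal H=A|x|^{-\alpha}+B|x|^{-\beta}$ on annuli and then choosing $\alpha>\frac{n-2}{2}$, $\beta$ small, one gets $u\leq C|x|^{-\beta}$ for arbitrarily small $\beta>0$; only then is $|x|^{-s}u^{\crits-1}-\mu u^q\in L^p$ for some $p>n/2$ (since $s<2\leq n$), which makes the regularity step (in the paper, Serrin's theorem) go through. Without something equivalent to this improvement, the final ``elliptic regularity yields $C^{0,\alpha}$'' in your proposal does not follow.
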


\medskip\noindent{\it Proof of Proposition \ref{prop:remove}:} We define the operator 
$$L\phi:=-\Delta\phi-\frac{u^{\crits-2}}{|x|^s}\phi+\mu u^{q-1}\phi$$
for $\phi\in C^2(B_1(0)\setminus\{0\})$. We fix $\alpha\in (0,n-2)$. Direct computations yield
$$L(|x|^{-\alpha})=|x|^{-\alpha-2}\left(\alpha(n-2-\alpha)-\left(|x|^{\frac{n-2}{2}}u(x)\right)^{\crits-2}+\mu|x|^2u(x)^{q-1}\right)$$
on $B_1(0)\setminus\{0\}$. Since $\alpha\in (0,n-2)$ and \eqref{lim:u:0} holds, there exists $R(\alpha)>0$ such that
\begin{equation}\label{ineq:L:alpha}
L(|x|^{-\alpha})>0
\end{equation}
for all $x\in\rn$ such that $0<|x|<R(\alpha)$. We fix $\beta\in (0,n-2)$, and we let $0<r<\delta<\min\{R(\alpha),R(\beta)\}$ be two real numbers, and we define the function
$${\mathcal H}(x):=\left(\sup_{|z|=r}|z|^\alpha u(z)\right)|x|^{-\alpha}+\left(\sup_{|z|=\delta}|z|^\beta u(z)\right)|x|^{-\beta }\hbox{ for all }x\neq 0.$$
It follows from \eqref{ineq:L:alpha} and the definition of $\mathcal H$ that
$$\left\{\begin{array}{ll}
L({\mathcal H}-u)>0 & \hbox{ in }B_\delta(0)\setminus \bar{B}_r(0)\\
({\mathcal H}-u)(x)\geq 0 & \hbox{ for all }x\in \partial\left(B_\delta(0)\setminus \bar{B}_r(0)\right)
\end{array}\right\}$$ 
Since $L{\mathcal H}>0$ et ${\mathcal H}>0$ sur $\overline{B_\delta(0)\setminus \bar{B}_r(0)}$, it follows from Beresticky-Nirenberg-Varadhan \cite{bnv} that $L$ satisfie the comparison principle and therefore ${\mathcal H}\geq u$ on $B_\delta(0)\setminus \bar{B}_r(0)$. Taking $\alpha>\frac{n-2}{2}$ and $\delta>0$ small enough, using \eqref{lim:u:0} and letting $r\to 0$ yields 
$$u(x)\leq \left(\sup_{|z|=\delta}|z|^\beta u(z)\right)|x|^{-\beta }\hbox{ for all }0<|x|<\delta.$$

\medskip\noindent Therefore, since $s\in (0,2)$, we get that there exists $p>\frac{n}{2}$ such that $|x|^{-s}u^{\crits-1}-\mu u^q\in L^p(B_{1/2}(0))$. It then follows from Theorem 1 of Serrin \cite{serrin} and \eqref{lim:u:0} that the singularity at zero is removable. This ends the proof of Proposition \ref{prop:remove}.\hfill$\Box$

\begin{coro}\label{coro:remove} Let $q>\crit-1$ and $u\in C^\infty(B_1(0)\setminus\{0\})$ be a positive solution of \eqref{eq:u}. Then $0$ is a removable singularity.
\end{coro}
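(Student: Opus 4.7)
The plan is to combine the two preceding results: the a priori bound from Proposition \ref{prop:bnd:u} and the removability criterion from Proposition \ref{prop:remove}.

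First I would invoke Proposition \ref{prop:bnd:u}. Since $q>\crit-1$, the definition \eqref{def:p} gives $p=\frac{2}{q-1}$, and the proposition yields a constant $C>0$ with
\[
u(x)\le C|x|^{-\frac{2}{q-1}}\qquad\text{on }B_{1/2}(0)\setminus\{0\}.
\]
Next I would compare this exponent to $\frac{n-2}{2}$. The inequality $q>\crit-1=\frac{n+2}{n-2}$ is equivalent to $q-1>\frac{4}{n-2}$, i.e.\ $\frac{2}{q-1}<\frac{n-2}{2}$. Therefore
\[
|x|^{\frac{n-2}{2}}u(x)\le C|x|^{\frac{n-2}{2}-\frac{2}{q-1}}\longrightarrow 0\qquad\text{as }x\to 0,
\]
so hypothesis \eqref{lim:u:0} of Proposition \ref{prop:remove} is satisfied.

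Applying Proposition \ref{prop:remove} then concludes that $0$ is a removable singularity for $u$, which is precisely the claim. There is no real obstacle here: the only step requiring attention is the elementary algebraic comparison $\frac{2}{q-1}<\frac{n-2}{2}$ equivalent to $q>\crit-1$, which ensures the pointwise bound from Proposition \ref{prop:bnd:u} is strong enough to trigger Proposition \ref{prop:remove}.
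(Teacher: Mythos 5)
Your proposal is correct and follows exactly the same route as the paper: apply Proposition \ref{prop:bnd:u} to obtain $u(x)\le C|x|^{-2/(q-1)}$, note that $q>\crit-1$ forces $\frac{2}{q-1}<\frac{n-2}{2}$ so that $|x|^{(n-2)/2}u(x)\to 0$, and then invoke Proposition \ref{prop:remove}. The only difference is that you spell out the elementary exponent comparison which the paper leaves implicit.
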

\begin{proof}[Proof of Corollary~\ref{coro:remove}] Proposition \ref{prop:bnd:u} gives that $|x|^\frac{2}{q-1}u(x)\leq C$ on $B_{1/2}(0)\setminus\{0\}$ for some constant $C>0$. Since $q>\crit-1$, this yields $\lim_{x\to 0}|x|^{\frac{n-2}{2}}u(x)=0$. Using Proposition \ref{prop:remove}, we complete the proof of Corollary \ref{coro:remove}. \end{proof}

\section{The case $\crits-1<q<\crit-1$: preliminary analysis}\label{sec:compensate}

Our aim in this section is to establish the following result. 

\begin{prop}\label{prop:lim:1} Let $\crits-1<q<\crit-1$. If $u\in C^\infty(B_1(0)\setminus\{0\})$ is a positive solution 
to \eqref{eq:u}, then the following dichotomy holds:
\begin{equation}\label{dicho}
\hbox{either }\lim_{x\to 0}|x|^\frac{s}{q-(\crits-1)}u(x)= 0\hbox{ or }\lim_{x\to 0}|x|^\frac{s}{q-(\crits-1)}u(x)= \mu^{-\frac{1}{q-(\crits-1)}}.
\end{equation}
Moreover, in the first case, we have that $\limsup_{x\to 0} |x|^{\frac{n-2}{2}}u(x)<+\infty$.
\end{prop}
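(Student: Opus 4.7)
The plan is to dispatch the ``moreover'' clause immediately and then prove the dichotomy via a blow-up analysis at sequences realizing $\limsup_{x\to 0}|x|^{\alpha} u(x)$, where $\alpha := s/(q-(\crits-1))$; the value of the limit will be pinned down by a Pohozaev-type obstruction. For the ``moreover'' clause: since $(|x|^{\alpha} u(x))^{q-(\crits-1)} = |x|^{s} u(x)^{q-(\crits-1)}$, the hypothesis $|x|^{\alpha} u(x) \to 0$ is equivalent to $|x|^{s} u(x)^{q-(\crits-1)} \to 0$, and Corollary~\ref{co1} then yields $u(x) \leq C|x|^{-(n-2)/2}$ on $B_{1/2}(0)\setminus\{0\}$.

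For the dichotomy itself, Proposition~\ref{prop:bnd:u} supplies $\Lambda := \limsup_{x \to 0}|x|^{\alpha}u(x) < \infty$. Assume $\Lambda > 0$ and choose $x_{k} \to 0$ with $|x_{k}|^{\alpha}u(x_{k}) \to \Lambda$. By a standard shell-maximizer selection we may further arrange that $|x|^{\alpha}u(x) \leq (1+o(1))|x_{k}|^{\alpha}u(x_{k})$ for $|x| \in [|x_{k}|/2,\, 2|x_{k}|]$. Following the template of Lemma~\ref{lem:blowup}, set $\lambda_{k} := |x_{k}|^{s/2}u(x_{k})^{-(\crits-2)/2}$. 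A direct computation gives $\lambda_{k}^{2}u(x_{k})^{\crits-2}/|x_{k}|^{s} = 1$ and $\lambda_{k}^{2}u(x_{k})^{q-1} = (|x_{k}|^{\alpha}u(x_{k}))^{q-(\crits-1)} \to \Lambda^{q-(\crits-1)}$, and the hypothesis $q < \crit-1$ ensures $\lambda_{k}/|x_{k}| \to 0$ (the exponent of $|x_{k}|$ in $\lambda_{k}/|x_{k}|$ is $(2-s)(\crit-1-q)/(2(q-(\crits-1))) > 0$). The shell selection forces $u_{k}(y) := u(x_{k} + \lambda_{k}y)/u(x_{k}) \leq 1 + o(1)$ on compact sets, and standard elliptic regularity yields $u_{k} \to U$ in $C^{2}_{\mathrm{loc}}(\rn)$ along a subsequence, with
\begin{equation*}
-\Delta U = U^{\crits-1} - cU^{q}\ \text{in } \rn,\qquad 0 < U \leq U(0) = 1, \qquad c := \mu\Lambda^{q-(\crits-1)}.
\end{equation*}
Hopf's maximum principle at $0$ gives $0 \leq -\Delta U(0) = 1-c$, hence $\Lambda \leq \mu^{-1/(q-(\crits-1))}$.

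Suppose, for contradiction, $c < 1$. Then $-\Delta U = U^{\crits-1}(1-cU^{q-(\crits-1)}) > 0$ on $\rn$, so $U$ is a bounded, positive, strictly superharmonic function on $\rn$ with interior maximum at $0$. The moving-plane method of Gidas--Ni--Nirenberg (together with a decay analysis at infinity) yields that $U$ is radial about $0$; the associated radial ODE then forces $U(r) \to 0$ as $r \to \infty$, because any positive limit $U_{\infty}$ would need to satisfy $f(U_{\infty}) = 0$ with $U_{\infty} \leq 1 < c^{-1/(q-(\crits-1))}$, which is impossible. Combining the Pohozaev identity $\int_{\rn}[nF(U) - \frac{n-2}{2}Uf(U)]\,dx = 0$ with the energy identity $\int_{\rn}|\nabla U|^{2}\,dx = \int_{\rn}Uf(U)\,dx$ gives
\begin{equation*}
\int_{\rn}|\nabla U|^{2}\,dx = c\left(\frac{B}{A} - 1\right)\int_{\rn}U^{q+1}\,dx,\qquad A := \frac{s(n-2)}{2(n-s)},\quad B := \frac{n+2-(n-2)q}{2(q+1)}.
\end{equation*}
A direct algebraic check shows $B < A$ precisely when $q > \crits-1$, which is our range, so the right-hand side is strictly negative. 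Since $c < 1$ precludes $U \equiv 1$ (as $f(1) = 1-c \neq 0$), we have $\int|\nabla U|^{2} > 0$, a contradiction. Hence $c = 1$, i.e., $\Lambda = \mu^{-1/(q-(\crits-1))}$.

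Finally, we upgrade $\limsup$ to $\lim$. A sequence $\tilde x_{k} \to 0$ with $|\tilde x_{k}|^{\alpha}u(\tilde x_{k}) \to \ell \in (0, \Lambda)$ would, by the same blow-up argument, force $\ell = \mu^{-1/(q-(\crits-1))} = \Lambda$, a contradiction. If instead $\liminf_{x \to 0}|x|^{\alpha}u(x) = 0$ while $\Lambda > 0$, then the continuity of $x \mapsto |x|^{\alpha}u(x)$ and the intermediate value theorem produce a sequence along which this quantity converges to $\Lambda/2 \in (0, \Lambda)$, once again a contradiction. The \emph{main obstacle} is the contradiction step above: it relies on establishing the decay of the blow-up limit $U$ at infinity that is needed to apply both the moving-plane method and the Pohozaev identity, which amounts to ruling out bounded positive supersolutions of $-\Delta V \geq (1-c)V^{\crits-1}$ on $\rn$---easily handled when $s \geq 1$ via Liouville-type results for supersolutions, but requiring more care when $s < 1$ (and hence $\crits-1 > n/(n-2)$).
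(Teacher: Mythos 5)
Your overall architecture is sound and closely parallels the paper's: blow up at a sequence realizing a positive finite limit of $|x|^{\alpha}u(x)$ (with the natural scale $\lambda_k = |x_k|^{s/2}u(x_k)^{-(\crits-2)/2}$), identify the limit PDE $-\Delta U = U^{\crits-1} - cU^q$ on $\rn$, then force $c=1$ so that the limiting constant is pinned down. The algebra in your Pohozaev step is correct (at $q=\crits-1$ one indeed gets $A=B$, and $B<A$ for $q>\crits-1$), and the ``moreover'' clause via Corollary~\ref{co1} is exactly right.

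However, there is a genuine gap in the Liouville step that you yourself flag, and it is not a minor technicality. To rule out $c<1$, you invoke Gidas--Ni--Nirenberg moving planes to get radial symmetry of $U$, and then the radial ODE plus the Pohozaev identity to obtain a contradiction. But the classical moving-plane argument on all of $\rn$ requires decay of $U$ at infinity to get started, and the Pohozaev identity plus the energy identity require $|\nabla U|^2$, $U^{\crits}$, and $U^{q+1}$ to be integrable on $\rn$. You propose to derive decay \emph{from} radiality, while radiality requires decay: this is circular. Moreover, when $s<1$ (so $\crits-1 > n/(n-2)$) the constant functions below $c^{-1/(q-(\crits-1))}$ are bounded positive supersolutions of $-\Delta V \geq (1-c)V^{\crits-1}$, so the supersolution-Liouville argument you suggest as a fallback also does not apply. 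The paper avoids all of this: in place of GNN it uses Lemma~\ref{lem:hypothesis}, whose proof appeals to the Liouville-type theorems of Bianchi \cite{bi} and Li--Zhang \cite{li}. Those results exploit the structural monotonicity $t\mapsto t^{-\frac{n+2}{n-2}}g(t)$ decreasing (with $g$ nonnegative and vanishing at its right endpoint) and deliver that bounded positive solutions are constant \emph{without} assuming decay at infinity; this is precisely what your argument is missing. A further, cosmetic difference: the paper blows up at an \emph{arbitrary} sequence $(x_i)$ with $|x_i|^{\alpha}u(x_i)\to\ell\in(0,\infty)$, using the global bound from Proposition~\ref{prop:bnd:u} to control $u_i$, rather than a shell-maximizer selection; this makes the ensuing continuity argument for the dichotomy more transparent. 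To repair your proof, replace the moving-plane/Pohozaev step with an appeal to the Liouville theorem of \cite{bi} or \cite{li} (as in Lemma~\ref{lem:hypothesis}), which both removes the circularity and covers the full range $s\in(0,2)$.
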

\begin{proof}[Proof of Proposition \ref{prop:lim:1}] We consider a sequence $(x_i)_{i\geq 1}$ such that 
$|x_i|\in (0,1/4)$ for all $i\geq 1$ and 
$\lim_{i\to +\infty}x_i=0$. Suppose that 
\begin{equation}\label{lim:l}
\lim_{i\to +\infty}|x_i|^\frac{s}{q-(\crits-1)}u(x_i)=\ell\quad \text{for some } \ell\in (0,\infty).
\end{equation}
We claim that $\ell=\mu^{-\frac{1}{q-(\crits-1)}}$.

\smallskip\noindent We prove the claim. Clearly, $\lim_{i\to +\infty}u(x_i)=+\infty$. It follows that
\begin{equation}\label{lim:1:infty}
\lim_{i\to +\infty}|x_i|^{\frac{n-2}{2}}u(x_i)=+\infty.
\end{equation}
We define $\lambda_i$ as a sequence of positive numbers converging to $0$, namely
$$\lambda_i:=|x_i|^{\frac{s}{2}}u(x_i)^{-\frac{2-s}{n-2}}\quad \text{for all } i\geq 1.$$
Using \eqref{lim:1:infty}, we find that
$\lim_{i\to +\infty}\frac{\lambda_i}{|x_i|}=0$.
Therefore, for all $R>0$, there exists $i_R\geq 1$ large such that 
\begin{equation}\label{bnd:x:lambda}
|x_i|/2\leq |x_i+\lambda_i x|\leq 2|x_i|<1/2\ \ \text{for all }x\in B_R(0)\ \text{and } i\geq i_R. 
\end{equation}
Consequently, $u_i(x)$ is well-defined on $B_R(0)$ for all $i\geq i_R$, where we set
$$u_i(x):=\frac{u(x_i+\lambda_i x)}{u(x_i)}\quad \text{for all } x\in B_{\frac{|x_i|}{2\lambda_i}}(0)\ \text{and } i\geq 1.$$
By Proposition \ref{prop:bnd:u}, there exists $C>0$ such that
$|x|^{\frac{s}{q-(\crits-1)}} u(x)\leq C$ for all $x\in B_{1/2}(0)\setminus\{0\}$. Let $C_1:=2^{\frac{s}{q-(\crits-1)}}C$. Therefore, \eqref{lim:l}, \eqref{lim:1:infty} and 
\eqref{bnd:x:lambda} yield
\begin{equation}\label{bnd:ui:ter}
u_i(x)\leq \frac{C |x_i+\lambda_i x|^{-\frac{s}{q-(\crits-1)}}}{u(x_i)}\leq  \frac{C_1}{|x_i|^{\frac{s}{q-(\crits-1)}}u(x_i)} 
\leq \frac{2C_1}{\ell}
\end{equation}
for $x\in B_R(0)$ provided $i$ is large enough. Equation \eqref{eq:u} rewrites as 
$$-\Delta u_i=\frac{u_i^{\crits-1}}{\left|\frac{x_i}{|x_i|}+ \frac{\lambda_i x}{|x_i|}\right|^s}- |x_i|^su(x_i)^{q-(\crits-1)} \mu u_i^q
\quad \text{in } B_{\frac{|x_i|}{2\lambda_i}}(0). $$
Hence, by \eqref{bnd:ui:ter} and the standard elliptic theory (see, for instance, Gilbarg--Trudinger \cite{gt}), 
there exists $U\in C^2(\rn)$ so that up to a subsequence, $\lim_{i\to +\infty}u_i=U$ in $C^2_{\rm loc}(\rn)$. Moreover, $U(0)=1$ and 
$U$ is a non-negative bounded solution of 
$$ 
-\Delta U=U^{\crits-1}- \ell^{q-(\crits-1)} \mu U^q\ \ \text{in }\rn.$$
By Hopf's maximum principle, we have $U>0$ in $\rn$. From Lemma \ref{lem:hypothesis} below, we conclude that 
$U$ is constant, and thus $\ell^{q-(\crits-1)}\mu=1$. This proves the claim.

\medskip\noindent Hence, given any sequence $(z_i)_i\to 0$, then, up to a subsequence, 
$|z_i|^\frac{s}{q-(\crits-1)}u(z_i)$ converges to either $0$ or $\mu^{-\frac{1}{q-(\crits-1)}}$ (by using Proposition \ref{prop:bnd:u}). 
By a continuity argument, we get \eqref{dicho}. In the first case, the control on $u$ follows from Corollary~\ref{co1}. 
This proves Proposition~\ref{prop:lim:1}. 
\end{proof}

\begin{lem}\label{lem:hypothesis} Let $1<r<q<\crit-1$. If $\alpha$ is a positive number and $U\in C^2(\rn)$ is a positive bounded 
solution of $-\Delta U=U^r-\alpha U^q$ in $\rn$, then $U\equiv \alpha^{\frac{1}{r-q}}$ in $\rn$. 
\end{lem}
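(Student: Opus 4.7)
The plan is to show $U \equiv c := \alpha^{1/(r-q)}$, the unique positive zero of $f(t) := t^r - \alpha t^q$; one verifies $f>0$ on $(0,c)$, $f<0$ on $(c,+\infty)$, and $f'(c) = (r-q)c^{r-1} < 0$. Since $U$ is bounded, $f(U)$ is bounded, and interior elliptic regularity gives a uniform $C^{2,\alpha}_{\mathrm{loc}}$-bound for $U$; thus, for any sequence $(z_k) \subset \rn$, the translates $U(\cdot+z_k)$ admit a $C^2_{\mathrm{loc}}$-convergent subsequence whose limit is another non-negative $C^2(\rn)$-solution of the same equation. The first step is to establish $\sup_\rn U = c$: picking $x_k$ with $U(x_k) \to M := \sup_\rn U > 0$, let $V$ be a $C^2_{\mathrm{loc}}$-limit of $U(\cdot+x_k)$, so that $V(0)=M$ and $V \leq M$. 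The interior maximum at the origin gives $-\Delta V(0) \geq 0$, hence $f(M) \geq 0$ and $M \leq c$. Moreover $V \leq c$ yields $-\Delta V = f(V) \geq 0$, so $V$ is superharmonic; the strong maximum principle at the interior maximum forces $V \equiv M$, whence $f(M) = 0$ and, since $M>0$, $M = c$.

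The dual argument handles the case $m := \inf_\rn U > 0$: picking $y_k$ with $U(y_k) \to m$, a $C^2_{\mathrm{loc}}$-limit $W$ satisfies $W(0) = m \leq W$; the interior minimum gives $-\Delta W(0) \leq 0$, so $f(m) \leq 0$ and thus $m \geq c$, which combined with the previous step yields $U \equiv c$. The whole problem therefore reduces to excluding $m = 0$.

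Suppose, for contradiction, $U(y_k) \to 0$ along some sequence, and consider the Gidas--Spruck blow-up at the scale at which $-\Delta U = U^r$ is invariant:
\[
\hat U_k(z) := \frac{U(y_k + \lambda_k z)}{U(y_k)}, \qquad \lambda_k := U(y_k)^{-(r-1)/2} \to +\infty,
\]
so that $\hat U_k(0) = 1$ and
\[
-\Delta \hat U_k = \hat U_k^r - \alpha\, U(y_k)^{q-r}\, \hat U_k^q \quad \text{in } \rn,
\]
with the second coefficient tending to $0$ since $q > r$. Provided $(\hat U_k)$ can be shown to be locally uniformly bounded, one extracts a $C^2_{\mathrm{loc}}$-limit $\hat U_\infty$, a non-negative $C^2$-solution of $-\Delta \hat U_\infty = \hat U_\infty^r$ on $\rn$ with $\hat U_\infty(0) = 1$, and hence strictly positive by the strong maximum principle. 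Since $1 < r < \crit - 1$, this contradicts the Liouville theorem of Gidas--Spruck, completing the proof. The main obstacle is precisely the local uniform boundedness of $\hat U_k$: the naive estimate $\hat U_k \leq c/U(y_k)$ diverges, so a careful selection of $y_k$ (for instance via a doubling-lemma argument à la Polacik--Quittner--Souplet, where the subcriticality $r < \crit - 1$ plays an essential role) is needed to ensure that, after rescaling, $U$ remains comparable to $U(y_k)$ on an unbounded window around the selected point.
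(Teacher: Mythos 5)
Your first paragraph reproduces the paper's first step (translate, take a $C^2_{\mathrm{loc}}$ limit, conclude $U\leq c:=\alpha^{1/(r-q)}$ from the sign of $-\Delta V$ at an interior maximum), with one spurious assertion: since $-\Delta V=f(V)\geq 0$ makes $V$ \emph{super}harmonic, the strong maximum principle forces constancy only at an interior \emph{minimum}, not at the interior maximum you invoke. The claim ``$V\equiv M$, hence $M=c$'' is therefore not justified as stated; fortunately only $\sup_{\rn}U\leq c$ is needed downstream, so this is a cosmetic flaw rather than a fatal one.

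The genuine gap is the one you flag yourself in the last paragraph. After reducing to the alternative $m:=\inf_{\rn}U=0$, you want to extract a nontrivial limit from the rescalings $\hat U_k(z)=U(y_k+\lambda_k z)/U(y_k)$ with $\lambda_k=U(y_k)^{-(r-1)/2}\to\infty$, but you have no uniform local bound: the only a priori control is $\hat U_k\leq c/U(y_k)\to\infty$. The doubling-lemma device of Polacik--Quittner--Souplet does not repair this. That lemma, applied with the natural quantity $M:=U^{(r-1)/2}$, selects points where $U$ is \emph{large} and produces an upper bound $U\leq C\,U(x_k)$ on balls of radius $k/M(x_k)\to 0$; it is tailored to scales going to zero and to ruling out blow-up of $U$. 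Here $U$ is already bounded, the problematic points are those where $U$ is \emph{small}, and the window $|z-y_k|\leq R\lambda_k$ has $\lambda_k\to\infty$. To bound $\hat U_k$ you would need a Harnack inequality of the form $\sup_{B_{R\lambda_k}(y_k)}U\leq C\inf_{B_{R\lambda_k}(y_k)}U$, but the natural route (Moser/Harnack for $-\Delta U=VU$ with $V:=U^{r-1}-\alpha U^{q-1}$) requires $\lambda_k^2\sup_{B_{R\lambda_k}(y_k)}|V|$ to be bounded, i.e. $\sup U^{r-1}\leq C\,U(y_k)^{r-1}$ on that ball --- which is precisely the bound being sought. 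Superharmonicity of $U$ only controls averages by the infimum (weak Harnack for supersolutions), not the supremum by the infimum, so it does not close the loop either. In short, the blow-up programme is plausible in spirit but the key compactness step is missing and is not supplied by the cited machinery.

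The paper avoids the whole dichotomy by a different mechanism: after the same bound $U\leq c$, it truncates the nonlinearity to $g(t):=t^r-\alpha t^q$ on $[0,c]$ and $g\equiv 0$ on $(c,\infty)$, observes that $g\geq 0$, $g(c)=0$ and --- using $1<r<q<(n+2)/(n-2)$ --- that $t\mapsto t^{-(n+2)/(n-2)}g(t)$ is decreasing on $(0,c]$, and then invokes the Liouville-type rigidity theorem of Bianchi (Theorem~3 of \cite{bi}) or Li--Zhang (Theorem~1.3 of \cite{li}) for $-\Delta U=g(U)$ on $\rn$. That theorem, proved by moving planes, directly yields $U\equiv c$ with no case distinction on $\inf U$. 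If you want to keep a self-contained blow-up argument, the missing ingredient is exactly such a rigidity/Liouville statement for the decaying regime; absent a careful replacement for it, your proof is incomplete.
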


\begin{proof}[Proof of Lemma \ref{lem:hypothesis}] We claim that $U(x)\leq \alpha^{1/(r-q)}$ for all $x\in\rn$. Indeed, if $U$ achieves its maximum (say $M$) at some point $x_0\in \rn$, then the claim follows from $-\Delta U(x_0)\geq 0$. If $U$ does not achieves its maximum $M$, then let $(x_i)_i\in\rn$ be such that $\lim_{i\to +\infty}U(x_i)=M$. Define $U_i:=U(\cdot+x_i)$. Then $U_i$ is bounded by $M$ and satisfies the same equation as $U$. It then follows from elliptic theory that $U_i\to \tilde{U}$ in 
$C^2_{\rm loc}(\rn)$, where $-\Delta \tilde{U}=\tilde{U}^r-\alpha \tilde{U}^q$ in $\rn$ and $\max_{\rn}\tilde{U}=\tilde{U}(0)=M$. 
From the first part, we conclude that $M\leq \alpha^{1/(r-q)}$. This proves the claim.

\smallskip\noindent We  define $g(t):=t^r-\alpha t^q$ for $0\leq t\leq \alpha^{\frac{1}{r-q}}$ and $g(t):=0$ otherwise. We see that $g(\alpha^{\frac{1}{r-q}})=0$, while 
$g$ is positive on $(0,\alpha^{\frac{1}{r-q}})$ and
$t^{-\frac{n+2}{n-2}}g(t)$ is decreasing on $(0, \alpha^{\frac{1}{r-q}}]$. Then by Theorem~3 in \cite{bi} (or, Theorem~1.3 in \cite{li}), we conclude that $U\equiv \alpha^{\frac{1}{r-q}}$ in $\rn$. This proves Lemma \ref{lem:hypothesis}.\end{proof}

\section{Auxiliary results for $q\leq \crit-1$}\label{sec:aux}

In this section, let $q\leq \crit-1$ and $u\in C^\infty(\ballp)$ be a positive solution to 
\begin{equation}\label{eq:5}
-\Delta u= \frac{u^{\crits-1}}{|x|^s}-\mu u^q \quad \hbox{in }\ballp.
\end{equation}
We assume that $\limsup_{x\to 0} |x|^{\frac{n-2}{2}}u(x)<+\infty $. Hence, there 
exists $C>0$ such that 
\begin{equation}\label{control}
|x|^{\frac{n-2}{2}}u(x)\leq C\hbox{ for all }x\in B_{1/2}(0)\setminus\{0\}.
\end{equation}

Let $(t_i)_{i\geq 1}$ be a sequence of positive numbers with $\lim_{i\to +\infty}t_i=0$. We define 
\neweq{ui} u_i(x):=t_i^{\frac{n-2}{2}}u(t_i x)\quad \text{for all } x\in B_{t_i^{-1}}(0)\setminus \{0\}.
\endeq
Then equation \eqref{eq:5} rewrites for $u_i$ as follows
\begin{equation}\label{eq:ui}
-\Delta u_i=\frac{u_i^{\crits-1}}{|x|^s}-\mu t_i^{\frac{n-2}{2}(\crit-1-q)}u_i^q\quad \text{in } B_{t_i^{-1}}(0)\setminus \{0\}. 
\end{equation}

From \eqref{control}, we have $|x|^{\frac{n-2}{2}}u_i(x)\leq C$ for all $x\in B_{t_i^{-1}/2}(0)\setminus \{0\}$. By the standard elliptic theory, up to a subsequence, 
 $u_i\to U$ in $C^2_{\rm loc}(\rnp)$ as $i\to +\infty$, where $U\in C^2(\rnp)$ is a 
non-negative function. Passing to the limit in \eqref{eq:ui} yields
\begin{equation}\label{eq:lim:2}
-\Delta U=\frac{U^{\crits-1}}{|x|^s}-\lambda U^q\hbox{ in }\rnp\ \text{with } \lambda=\left\{
\begin{aligned}
& 0 && \text{ if }q<\crit-1,&\\
& \mu && \text{ if }q=\crit-1.&
\end{aligned}\right.
\end{equation}

Summarising, we have obtained the following result.

\begin{lem}\label{lem:prelim:1} Let $q\leq \crit-1$ and  
$u\in C^\infty(\ballp)$ be a positive solution of \eqref{eq:5}. Let $(t_i)_{i\geq 1}$ be a sequence of positive numbers
with $\lim_{i\to +\infty}t_i=0$. We define $(u_i)_i$ as in \eqref{ui}. If \eqref{control} holds, then $(u_i)_i$ satisfies, up to a subsequence, 
\neweq{jk} u_i\to U\  \text{in } C^2_{\rm loc}(\rnp)\ \text{as }i\to +\infty,\endeq where $U\in C^2(\rnp)$ is a non-negative solution of
\eqref{eq:lim:2}. 
 \end{lem}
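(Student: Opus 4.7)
The plan is to combine the hypothesis \eqref{control} with standard interior elliptic theory in order to produce a subsequential $C^2_{\rm loc}$ limit of the rescaled family $(u_i)$, and then to pass to the limit in \eqref{eq:ui} to identify the equation satisfied by $U$ in the two ranges of $q$.

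The first step I would carry out is to translate \eqref{control} into a uniform local bound on the rescaled functions. Fix any compact set $K\subset\rnp$, included in some annulus $\{R^{-1}\le |x|\le R\}$. For all $i$ large enough that $R t_i<1/2$, the point $t_i x$ lies in $B_{1/2}(0)\setminus\{0\}$ for every $x\in K$, and \eqref{control} gives
\[
u_i(x) = t_i^{\frac{n-2}{2}} u(t_i x) = |x|^{-\frac{n-2}{2}}\,|t_i x|^{\frac{n-2}{2}} u(t_i x) \le C\,|x|^{-\frac{n-2}{2}} \le C R^{\frac{n-2}{2}}.
\]
Thus $(u_i)$ is uniformly bounded on each compact $K\subset\rnp$.

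Next, I would bootstrap this $L^\infty_{\rm loc}$ bound to a $C^{2,\alpha}_{\rm loc}$ bound. The right-hand side of \eqref{eq:ui} is the sum of $|x|^{-s} u_i^{\crits-1}$, whose factor $|x|^{-s}$ is bounded on compacts of $\rnp$, and of $-\mu t_i^{\frac{n-2}{2}(\crit-1-q)} u_i^q$, whose coefficient is bounded since $q\le \crit-1$ and $t_i\le 1$ eventually. Since $u_i$ is uniformly bounded on $K$, so is the right-hand side. Interior Schauder estimates (cf.\ Gilbarg--Trudinger \cite{gt}) then yield uniform $C^{2,\alpha}$ bounds for $(u_i)$ on any compactly contained subset, and a diagonal Arzelà--Ascoli argument applied to an exhaustion $(K_j)_j$ of $\rnp$ extracts a subsequence converging in $C^2_{\rm loc}(\rnp)$ to some nonnegative $U\in C^2(\rnp)$.

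Finally, I would pass to the limit in \eqref{eq:ui}, which is licit in $C^0_{\rm loc}(\rnp)$. Two subcases appear. If $q<\crit-1$, then $t_i^{\frac{n-2}{2}(\crit-1-q)}\to 0$, the perturbation term vanishes, and $U$ solves $-\Delta U = U^{\crits-1}/|x|^s$ in $\rnp$, i.e.\ \eqref{eq:lim:2} with $\lambda=0$. If $q=\crit-1$, the exponent is zero and the coefficient is $1$, so $U$ solves \eqref{eq:lim:2} with $\lambda=\mu$. There is no real obstacle in the argument: once \eqref{control} is assumed, everything reduces to bookkeeping and standard elliptic theory. The only point requiring a touch of care is that compacts of $\rnp$ must avoid both $0$ and $\infty$; the bound on $u_i$ degenerates like $|x|^{-(n-2)/2}$ near $0$, which is precisely the scale where the (CGS)/(MB) analysis lives, but this behavior is invisible to the topology of $C^2_{\rm loc}(\rnp)$ and will be addressed in later sections.
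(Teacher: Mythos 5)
Your proof is correct and follows essentially the same route as the paper: translate \eqref{control} into the scaling-invariant bound $u_i(x)\le C|x|^{-(n-2)/2}$, invoke standard elliptic regularity to extract a $C^2_{\rm loc}(\rnp)$ subsequential limit, and pass to the limit in \eqref{eq:ui}, distinguishing $q<\crit-1$ (where $t_i^{\frac{n-2}{2}(\crit-1-q)}\to 0$) from $q=\crit-1$ (where this factor equals $1$). The paper compresses the middle step into ``standard elliptic theory''; you have merely unpacked it, with the minor implicit caveat that one upgrades from the $L^\infty$ bound to a $C^{0,\alpha}$ bound via $W^{2,p}$ estimates before invoking Schauder.
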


In Lemma~\ref{lem:prelim:2} below, we shall rely on Lemma~\ref{lem:prelim:1} to obtain gradient and second derivative estimates on $u$.

\begin{lem}\label{lem:prelim:2} Let $q\leq\crit-1$ and $u\in C^\infty(\ballp)$ be a positive solution to \eqref{eq:50} such that \eqref{control} holds. 
Then there exists a positive constant $C>0$ such that
\begin{equation}\label{bnd:grad}
|x|^{\frac{n}{2}}|\nabla u|(x)+|x|^{\frac{n}{2}+1}|\nabla^2 u|(x)\leq C\hbox{ for all }x\in B_{1/2}(0)\setminus \{0\}.
\end{equation}
\end{lem}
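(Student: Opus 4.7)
My plan is to prove Lemma~\ref{lem:prelim:2} by a direct rescaling argument, converting pointwise gradient/Hessian bounds at a point $x_0$ into uniform $C^2$-bounds on a rescaled function $v$ on a fixed annulus, to which standard interior elliptic estimates apply.

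Fix any $x_0 \in B_{1/4}(0) \setminus \{0\}$ and set $t := |x_0|$. Define
$$v(y) := t^{\frac{n-2}{2}} u(ty)\quad \text{for } y \in B_{1/(2t)}(0)\setminus\{0\},$$
so $v = u_t$ in the notation of \eqref{ui}. As in the derivation of \eqref{eq:ui}, $v$ satisfies
$$-\Delta v = \frac{v^{\crits-1}}{|y|^s} - \mu\, t^{\frac{n-2}{2}(\crit-1-q)}\, v^q \quad \text{in } B_{1/(2t)}(0)\setminus\{0\}.$$
The assumption $q\le \crit-1$ ensures the nonnegative exponent $\frac{n-2}{2}(\crit-1-q)\ge 0$, so the coefficient of $v^q$ stays bounded as $t\to 0$.

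Next I would use \eqref{control} to get a uniform sup-bound for $v$ on a fixed annulus independent of $x_0$. Indeed, for any $y$ with $|y|\ge 1/3$ and $|ty|\le 1/2$, hypothesis \eqref{control} applied at the point $ty$ gives
$$v(y) = t^{\frac{n-2}{2}} u(ty) = |y|^{-\frac{n-2}{2}} \bigl(|ty|^{\frac{n-2}{2}} u(ty)\bigr) \le C\,|y|^{-\frac{n-2}{2}} \le C',$$
uniformly in $t$ small. Hence $v$ is uniformly bounded on the closed annulus $A := \{1/3 \le |y| \le 3\}$, and consequently the right-hand side of the equation for $v$ is uniformly bounded on $A$.

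Standard interior elliptic regularity (interior $W^{2,p}$ estimates followed by Schauder, applied to a smoothly cut-off subdomain of $A$ containing the unit sphere) now yields $\|v\|_{C^2(A')} \le C$ for the smaller annulus $A' := \{1/2 \le |y| \le 2\}$, with $C$ independent of $t$. Evaluating at the point $y_0 := x_0/t \in \partial B_1(0)\subset A'$ and applying the chain rule
$$\nabla v(y_0) = t^{\frac{n}{2}}\, \nabla u(x_0), \qquad \nabla^2 v(y_0) = t^{\frac{n}{2}+1}\, \nabla^2 u(x_0),$$
we obtain $|x_0|^{n/2}|\nabla u(x_0)| + |x_0|^{n/2+1}|\nabla^2 u(x_0)| \le C$, which is \eqref{bnd:grad}. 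Since $x_0 \in B_{1/4}(0)\setminus\{0\}$ was arbitrary, and the remaining shell $B_{1/2}(0)\setminus B_{1/4}(0)$ is a compact set on which $u$ is already smooth, the estimate extends to all of $B_{1/2}(0)\setminus\{0\}$. There is no real obstacle here: the only point to be careful about is verifying that the scaling exponents in the rescaled equation make the coefficient of $v^q$ stay bounded, which is exactly the role of the hypothesis $q \le \crit-1$.
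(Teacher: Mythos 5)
Your proof is correct, and the underlying mechanism (rescale by $t=|x_0|$, observe that $v(y)=t^{\frac{n-2}{2}}u(ty)$ is uniformly bounded on a fixed annulus by \eqref{control}, apply interior elliptic estimates, undo the scaling) is exactly the one the paper uses. The paper packages it differently: it argues by contradiction, supposing a bad sequence $(x_i)\to 0$ exists, setting $u_i(x)=|x_i|^{\frac{n-2}{2}}u(|x_i|x)$, and invoking Lemma~\ref{lem:prelim:1}, which already records the $C^2_{\rm loc}(\rnp)$-compactness of these rescalings; the derivative bound at $x_i/|x_i|\in\partial B_1(0)$ then contradicts the assumed blow-up. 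Your version is direct and self-contained: rather than routing through Lemma~\ref{lem:prelim:1} and extracting a subsequence, you show the uniform $C^{2,\alpha}$-bound on a fixed annulus for every small $t$ at once, which is slightly more transparent. One minor bookkeeping slip: with $x_0\in B_{1/4}(0)\setminus\{0\}$, the inequality $|ty|\le 1/2$ needed to invoke \eqref{control} holds only for $|y|\le 2$, not up to $|y|=3$; either shrink the outer annulus to $\{1/3\le |y|\le 2\}$ or restrict to $x_0\in B_{1/6}(0)\setminus\{0\}$ (and absorb $\overline{B_{1/2}(0)}\setminus B_{1/6}(0)$ by compactness). Also, when you pass from $W^{2,p}$ to Schauder you implicitly use that $\crits-1>1$ and $q>1$ so that $f=\frac{v^{\crits-1}}{|y|^s}-\mu t^{\frac{n-2}{2}(\crit-1-q)}v^q$ inherits the H\"older regularity of $v$; this is true here but worth stating.
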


\begin{proof}[Proof of Lemma \ref{lem:prelim:2}] We argue by contradiction. Assume that there exists $(x_i)_i\to 0$ such that $|x_i|^{\frac{n}{2}}|\nabla u(x_i)|+|x_i|^{\frac{n}{2}+1}|\nabla^2 u(x_i)|\to +\infty$ as $i\to +\infty$. We define $u_i(x):=|x_i|^{\frac{n-2}{2}}u(|x_i|x)$ for $x\in B_2(0)\setminus \{0\}$. From Lemma \ref{lem:prelim:1}, we have that, up to a subsequence, $(u_i)$ converges in $C^2_{\rm loc}(B_2(0)\setminus\{0\})$, and thus $|\nabla u_i(\frac{x_i}{|x_i|})|+|\nabla^2 u_i(\frac{x_i}{|x_i|})|$ is bounded as $i\to +\infty$, contradicting our initial hypothesis. This proves \eqref{bnd:grad}, which finishes the proof of Lemma~\ref{lem:prelim:2}. 
\end{proof}

We next establish a spherical Harnack inequality. 

\begin{lem} \label{lem:harnack}
Let $q\leq\crit-1$ and $u\in C^\infty(\ballp)$ be a positive solution to \eqref{eq:50} such that \eqref{control} holds. 
Then there exists a positive constant $C_1$ such that
\begin{equation} \label{harnack}
u(x)\leq C_1 u(y)\hbox{ for all }x,y\in \partial B_r(0)\ \text{and every }r\in (0,1/2).
\end{equation}
\end{lem}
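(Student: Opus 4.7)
The plan is a rescaling argument that reduces the claim to the classical elliptic Harnack inequality on a fixed annulus. For $r \in [1/4, 1/2]$, the estimate is immediate, since $u$ is smooth and strictly positive on the compact set $\overline{B_{1/2}(0)} \setminus B_{1/4}(0)$; hence I may assume $r \in (0, 1/4)$.

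For such $r$ I would rescale exactly as in Lemma \ref{lem:prelim:1}: set $u_r(x) := r^{(n-2)/2} u(rx)$ on $B_{1/(2r)}(0)\setminus\{0\}$. By \eqref{control}, $|x|^{(n-2)/2} u_r(x) \leq C$, so in particular $u_r$ is uniformly bounded on the fixed annulus $A := B_2(0) \setminus \overline{B_{1/2}(0)}$ independently of $r$. Rewriting \eqref{eq:ui} in linear form, $u_r$ solves
$$-\Delta u_r = V_r(x)\, u_r \quad \text{in } A,$$
with potential
$$V_r(x) := \frac{u_r(x)^{\crits-2}}{|x|^s} - \mu\, r^{\frac{n-2}{2}(\crit-1-q)}\, u_r(x)^{q-1}.$$
Because $q \leq \crit-1$, the prefactor $r^{\frac{n-2}{2}(\crit-1-q)}$ stays bounded as $r \to 0$; combined with the uniform bound on $u_r$ and the bound $|x|^{-s} \leq 2^s$ on $A$, this yields an $L^\infty$-bound $\|V_r\|_{L^\infty(A)} \leq M$ that is independent of $r$.

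With $V_r$ uniformly bounded on $A$, the classical Harnack inequality (e.g.\ \cite{gt}, Theorem 8.20) applied to the non-negative solution $u_r$ of $-\Delta u_r + (-V_r) u_r = 0$ on $A$, restricted to the compact subset $\partial B_1(0) \subset A$, delivers a constant $C_1 > 0$ depending only on $M$, $n$, and $A$, such that
$$\sup_{\partial B_1(0)} u_r \leq C_1 \inf_{\partial B_1(0)} u_r.$$
Unscaling via $u(x) = r^{-(n-2)/2} u_r(x/r)$ on $\partial B_r(0)$ immediately produces \eqref{harnack}.

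The main obstacle is to guarantee that the Harnack constant does not deteriorate as $r \to 0$. This reduces to the uniform $L^\infty$-bound on $V_r$, which in turn relies simultaneously on the pointwise bound \eqref{control} (controlling the critical-type term $u_r^{\crits-2}/|x|^s$) and on the hypothesis $q \leq \crit-1$ (ensuring that the subcritical perturbation does not blow up under rescaling). Note the parallel restriction $q \leq \crit - 1$ is precisely the one made throughout this section.
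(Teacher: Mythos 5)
Your proof is correct and takes essentially the same approach as the paper: rescale to a fixed annulus and invoke the classical elliptic Harnack inequality, with \eqref{control} and $q\leq\crit-1$ guaranteeing the rescaled potential, and hence the Harnack constant, is uniform in $r$. The paper phrases this as a contradiction argument along a sequence $t_i\to t$, while you argue directly and make explicit the uniform $L^\infty$ bound on $V_r$ that the paper leaves implicit; the underlying estimate is the same.
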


\begin{proof}[Proof of Lemma~\ref{lem:harnack}] We argue by contradiction and assume that there exists a sequence $(t_i)_i\in (0, 1/2)$, and sequences $(x_i)_i,(y_i)_i\in\rn$ such that $|x_i|=|y_i|=t_i$ and $u(x_i)=o(u(y_i))$ as $i\to +\infty$. Without loss of generality, we assume that $t_i\to t\in [0,1/2]$ as $i\to +\infty$. We define $u_i$ as in \eqref{ui}. Then \eqref{eq:ui} holds. Moreover, from \eqref{control}, there exists $C>0$ such that $0<u_i(x)\leq C$ in $B_2(0)\setminus B_{1/2}(0)$. The classical Harnack inequality (see, for instance, Gilbarg--Trudinger \cite{gt}) gives a positive  constant $C_0$ such that $u_i(y)\leq C_0 u_i(x)$ for all $x,y\in \partial B_1(0)$. This contradicts $u(x_i)=o(u(y_i))$ as $i\to +\infty$, proving \eqref{harnack}. This ends the proof of Lemma~\ref{lem:harnack}. 
\end{proof}

\section{Pohozaev integral and first consequences}\label{sec:poho}

We prove the following result that will be used several times in the paper. 

\begin{prop}\label{prop:poho}
We fix a smooth bounded domain $\omega\subset \rn$ such that $0\not\in \overline{\omega}$. Let $v\in C^2(\overline{\omega})$ be any positive solution of 
\begin{equation}\label{eq:v:1}
-\Delta v=\frac{v^{\crits-1}}{|x|^s}-\lambda v^q\hbox{ in }\overline{\omega},
\end{equation}
where $\lambda\in\rr$, $s\in (0,2)$ and $q>1$. Then 
\begin{equation} \label{poho:id}
\begin{aligned}
&\int_{\partial \omega}\left[(x,\nu)\left(\frac{|\nabla v|^2}{2}-\frac{v^{\crits}}{\crits|x|^s}+\lambda\frac{v^{q+1}}{q+1}\right)-T(x,v)\partial_\nu v\right]\, d\sigma \\ 
&=\frac{(n-2)\lambda}{2(q+1)}(\crit-1-q)\int_\omega v^{q+1}\, dx, 
\end{aligned}
\end{equation}
where
\neweq{nott}
T(x,v):=(x,\nabla v(x))+\frac{n-2}{2}v(x).  
\endeq
Here, $(x,\nabla v(x)):=\sum_{j=1}^nx^j\partial_j v$, whereas $\nu$ and $d\sigma$ denote the outward normal vector of $\partial\omega$ and the canonical volume element on $\partial\omega$, respectively. 
\end{prop}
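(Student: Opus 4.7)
\textbf{Proof plan for Proposition \ref{prop:poho}.} The plan is to carry out the standard Pohozaev procedure adapted to the Hardy--Sobolev weight. I would multiply equation \eqref{eq:v:1} by the conformal multiplier $T(x,v)=(x,\nabla v)+\frac{n-2}{2}v$ and integrate over $\omega$; because $0\notin\overline{\omega}$ the weight $|x|^{-s}$ is smooth on $\overline{\omega}$, so all integrations by parts are completely standard with no regularization needed.

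First I would handle the left-hand side $-\int_{\omega}\Delta v\cdot T(x,v)\,dx$. Splitting the classical computation into its two pieces, one gets
\[
-\int_{\omega}\Delta v\,(x,\nabla v)\,dx=\int_{\partial\omega}\left[\frac{(x,\nu)}{2}|\nabla v|^{2}-(x,\nabla v)\partial_{\nu}v\right]d\sigma+\frac{2-n}{2}\int_{\omega}|\nabla v|^{2}\,dx
\]
(using $\partial_{i}(\partial_{i}v\,x_{j}\partial_{j}v)$ and $\partial_{j}(x_{j}|\nabla v|^{2})$ to rearrange), while $-\frac{n-2}{2}\int_{\omega}\Delta v\cdot v\,dx=\frac{n-2}{2}\int_{\omega}|\nabla v|^{2}\,dx-\frac{n-2}{2}\int_{\partial\omega}v\partial_{\nu}v\,d\sigma$. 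The volume integrals of $|\nabla v|^{2}$ cancel exactly, leaving
\[
-\int_{\omega}\Delta v\cdot T(x,v)\,dx=\int_{\partial\omega}\left[\frac{(x,\nu)}{2}|\nabla v|^{2}-T(x,v)\,\partial_{\nu}v\right]d\sigma.
\]

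Next I would treat the two pieces of the right-hand side separately. For the critical Hardy--Sobolev term, write $v^{\crits-1}(x,\nabla v)=\frac{1}{\crits}(x,\nabla(v^{\crits}))$, integrate by parts, and use the identity $\mathrm{div}(x|x|^{-s})=(n-s)|x|^{-s}$ together with the key algebraic fact $\frac{n-s}{\crits}=\frac{n-2}{2}$ (which is precisely what makes $\crits$ critical). This collapses the interior contribution from $(x,\nabla v)$ against the one from $\frac{n-2}{2}v$, leaving only the boundary term
\[
\int_{\omega}\frac{v^{\crits-1}}{|x|^{s}}T(x,v)\,dx=\frac{1}{\crits}\int_{\partial\omega}\frac{(x,\nu)v^{\crits}}{|x|^{s}}\,d\sigma.
\]
For the subcritical perturbation, the analogous computation with $v^{q}(x,\nabla v)=\frac{1}{q+1}(x,\nabla(v^{q+1}))$ yields
\[
\int_{\omega}v^{q}T(x,v)\,dx=\left(\frac{n-2}{2}-\frac{n}{q+1}\right)\int_{\omega}v^{q+1}\,dx+\frac{1}{q+1}\int_{\partial\omega}(x,\nu)v^{q+1}\,d\sigma,
\]
where now, since $q\neq\crit-1$ in general, the interior term does not vanish; a direct manipulation gives $\frac{n-2}{2}-\frac{n}{q+1}=-\frac{n-2}{2(q+1)}(\crit-1-q)$.

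Finally I would assemble the three pieces. Equating the boundary expression for $-\int_{\omega}\Delta v\cdot T(x,v)\,dx$ to $\int_{\omega}\bigl(|x|^{-s}v^{\crits-1}-\lambda v^{q}\bigr)T(x,v)\,dx$ and bringing the boundary contributions of $|x|^{-s}v^{\crits}/\crits$ and $\lambda v^{q+1}/(q+1)$ to the left-hand side produces exactly \eqref{poho:id}. There is no real obstacle here; the only thing to watch is to track signs and to remember the identity $\frac{n-s}{\crits}=\frac{n-2}{2}$, which is what forces the interior $v^{\crits}/|x|^{s}$ contribution to disappear and thereby isolates the sign-changing nonlinearity as the sole source of the volume integral on the right-hand side of \eqref{poho:id}.
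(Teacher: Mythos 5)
Your proposal is correct and follows essentially the same route as the paper: multiply by the Pohozaev multiplier $T(x,v)$, invoke the standard Pohozaev identity for the Laplacian term, and integrate the nonlinear terms by parts using $\mathrm{div}(x|x|^{-\tau})=(n-\tau)|x|^{-\tau}$. The only cosmetic difference is that the paper records a single integration-by-parts identity for $\int_\omega T(x,v)\,v^p|x|^{-\tau}\,dx$ valid for all $\tau\in[0,2]$, $p\geq 1$ and then specializes it twice, whereas you handle the Hardy--Sobolev term ($\tau=s$, $p=\crits-1$, where $\frac{n-s}{\crits}=\frac{n-2}{2}$ makes the interior contribution vanish) and the perturbation ($\tau=0$, $p=q$) separately; the computations are identical.
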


\begin{proof}[Proof of Proposition \ref{prop:poho}] The standard Pohozaev identity (see \cite{poho}) asserts that 
$$\int_\omega T(x,v)(-\Delta v)\, dx=\int_{\partial\omega}\left[(x,\nu)\frac{|\nabla v|^2}{2}-T(x,v)\,\partial_\nu v \right]\, d\sigma.$$
Independently, for any $\tau\in [0,2]$ and $p\geq 1$, integrating by parts yields
$$\int_\omega T(x,v) \frac{v^p}{|x|^\tau}\, dx=\left(\frac{n-2}{2}-\frac{n-\tau}{p+1}\right)\int_\omega\frac{v^{p+1}}{|x|^\tau}\, dx+\frac{1}{p+1}\int_{\partial\omega}\frac{(x,\nu)\,v^{p+1}}{|x|^\tau}\, d\sigma.$$
Combining these two identities with equation \eqref{eq:v:1} yields \eqref{poho:id}. This ends the proof of Proposition \ref{prop:poho}.\end{proof}

\subsection{Solutions of $-\Delta U= \frac{U^{\crits-1}}{|x|^s}$ in $\rn\setminus \{0\}$}\label{sec:lim:eq}
Throughout this section, we let
$U\in C^\infty(\rn\setminus\{0\})$ be a positive solution to
\begin{equation}\label{eq:lim}
-\Delta U= \frac{U^{\crits-1}}{|x|^s}\hbox{ in }\rnp. 
\end{equation}
For any $r>0$, we define the Pohozaev integral 
\begin{equation}\label{poho:invar:r}
P_r(U)=\int_{\partial B_r(0)}\left[(x,\nu)\left(\frac{|\nabla U|^2}{2}-\frac{U^{\crits}}{\crits|x|^s}\right)-T(x,U)\,\partial_\nu U\right]\, d\sigma.
\end{equation}
Letting $\lambda=0$ in Proposition \ref{prop:poho}, we find that 
$P_r(U)=P_1(U)$ for any $r>0$. For simplicity, we use $P(U)$ to denote this Pohozaev invariant associated to $U$.  

\vspace{0.2cm}
The following result, which follows essentially from Caffarelli--Gidas--Spruck \cite{cgs} and Hsia--Lin--Wang \cite{hlw}, shows that $0$ is a removable singularity for 
$U$ if and only if the Pohozaev invariant $P(U)$ is zero. In this case, there exists $\lambda>0$ such that $U$ is of the form \eqref{def:U:lambda}, where
$c_{n,s}$ is defined by 
\neweq{def:cns}
c_{n,s}=\left((n-s)(n-2)\right)^{\frac{n-2}{2(2-s)}}. 
\endeq

\begin{prop}\label{prop:poho:U} Let $U\in C^\infty(\rn\setminus\{0\})$ be a positive solution of \eqref{eq:lim}. Then $U$ is radially symmetrical with respect to $0$ and $P(U)\geq 0$. More precisely,

\smallskip\noindent$\bullet$ If $P(U)=0$, then $U$ extends continuously at $0$ and there exists $\lambda>0$ such that
\begin{equation}
\label{def:U:lambda}
U(x)=U_\lambda(x):=c_{n,s}\left(\frac{\lambda^{1-\frac{s}{2}}}{\lambda^{2-s}+|x|^{2-s}}\right)^{\frac{n-2}{2-s}}\hbox{ for all }x\in\rn. 
\end{equation}
\smallskip\noindent$\bullet$ If $P(U)>0$, then $U$ is singular at $0$ and there exists $v\in C^\infty(\rr)$ a positive periodic function such that
$$U(x)=|x|^{-\frac{n-2}{2}}v(-\ln|x|)\hbox{ for all }x\in \rnp.$$
Moreover, up to a translation, $v$ is uniquely defined by the value $P(U)>0$.
\end{prop}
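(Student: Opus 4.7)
\emph{Plan.} The approach is the classical Emden--Fowler reduction: rewrite \eqref{eq:lim} as an autonomous second-order ODE on $\mathbb{R}$ whose conserved Hamiltonian is a multiple of $P(U)$, and read off the three statements from a phase-plane analysis on $\{v>0\}$.

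First I would show that any positive $U\in C^\infty(\rnp)$ solving \eqref{eq:lim} is radial about $0$. Because the Hardy--Sobolev exponent $\crits$ is conformally critical, the Kelvin transform sends $U$ to another solution of \eqref{eq:lim}, and the moving planes method applied to hyperplanes not passing through $0$, first to $U$ and then to its Kelvin transform, gives symmetry in every direction; this is the Caffarelli--Gidas--Spruck argument \cite{cgs} carried over to the Hardy--Sobolev setting by Hsia--Lin--Wang \cite{hlw}. Writing $U(x)=U(r)$ with $r=|x|$ and setting $t:=-\ln r$, $v(t):=r^{(n-2)/2}U(r)$, equation \eqref{eq:lim} becomes
$$v''(t)-\left(\tfrac{n-2}{2}\right)^{\!2}v(t)+v(t)^{\crits-1}=0,\qquad t\in\mathbb{R},$$
with conserved Hamiltonian $H:=\tfrac{1}{2}(v')^2-\tfrac{1}{2}\left(\tfrac{n-2}{2}\right)^{\!2}v^2+v^{\crits}/\crits$. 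A direct computation of \eqref{poho:invar:r} using radiality and $\nu=x/r$ yields $P_r(U)=-\omega_{n-1}H$, which in particular re-proves that $P_r(U)$ is independent of $r$.

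Now set $V(v):=-\tfrac{1}{2}\left(\tfrac{n-2}{2}\right)^{\!2}v^2+v^{\crits}/\crits$ on $\{v>0\}$: it is negative near $0$, has a unique minimum at some $v_\ast>0$ with $V(v_\ast)<0$, vanishes again at a unique $v_0>v_\ast$, and tends to $+\infty$. Since $U>0$ forces $v>0$ on all of $\mathbb{R}$, inspecting the level sets $\{H=\text{const}\}$ rules out $H>0$ (the orbit would either reach $v=0$ with nonzero speed or escape to $+\infty$ in finite $t$); hence $H\leq 0$ and $P(U)\geq 0$. If $P(U)=0$ the orbit is the unique (up to phase) homoclinic to $(0,0)$, and integrating $(v')^2=-2V(v)$ gives in closed form $v(t)=c_{n,s}\bigl(2\cosh((2-s)(t-t_0)/2)\bigr)^{-(n-2)/(2-s)}$ for some $t_0\in\mathbb{R}$, which upon inverting the change of variables is exactly \eqref{def:U:lambda} with $\lambda=e^{-t_0}$; the asymptotic $v(t)\sim c_{n,s}e^{-(n-2)t/2}$ as $t\to+\infty$ shows that $U$ extends continuously at $0$. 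If $P(U)>0$ the orbit is a closed loop around $(v_\ast,0)$ contained in $\{v>0\}$, so $v$ is a smooth positive periodic function with $\min v>0$, which in particular prevents continuous extension of $U$ at $0$. Uniqueness of $v$ up to translation follows because the closed orbits form a one-parameter family smoothly indexed by $H\in(V(v_\ast),0)$ and $P(U)=-\omega_{n-1}H$ recovers $H$.

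The only genuinely non-routine ingredient is the radial symmetry step: the weight $|x|^{-s}$ prevents a direct moving planes argument with respect to hyperplanes through $0$, and one is forced to exploit the conformal invariance of the equation through the Kelvin transform. Everything else reduces to textbook phase-plane analysis for a single scalar autonomous ODE with a confining potential, with the identity $P_r(U)=-\omega_{n-1}H$ packaging the correspondence between the sign of the Pohozaev invariant and the type of orbit that drives the three cases.
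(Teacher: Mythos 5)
Your proposal is correct and is essentially the paper's proof: you invoke the same radial symmetry results (Chou--Chu, Hsia--Lin--Wang, via Kelvin transform and moving planes), perform the same Emden--Fowler change of variables to obtain $v''-\tfrac{(n-2)^2}{4}v+v^{\crits-1}=0$, identify the conserved first integral (your $H$ is $-K$ in the paper's notation, so $P(U)=-\omega_{n-1}H=\omega_{n-1}K$), and read off the trichotomy from the phase portrait exactly as the paper does by citing a classical ODE analysis. The only cosmetic differences are your explicit unpacking of the phase-plane argument (homoclinic orbit for $H=0$, closed loops for $V(v_\ast)<H<0$, the degenerate constant orbit at $H=V(v_\ast)$) and a harmless dropped constant factor $e^{(n-2)t_0/2}$ in your asymptotic for $v(t)$ as $t\to+\infty$, neither of which affects the conclusion.
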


\begin{proof}[Proof of Proposition \ref{prop:poho:U}]  We sketch the proof here for its steps will be used in the sequel. The radial symmetry has been proved by Chou--Chu \cite{cc} for removable singularity, and by Hsia--Lin--Wang \cite{hlw} for non-removable singularity. The methods are inspired by the classical moving-plane method of Alexandrov used by 
Caffarelli--Gidas--Spruck \cite{cgs} in the case $s=0$. We define 
\begin{equation}\label{def:phi}
\varphi :  \rr\times \mathbb{S}^{n-1}  \to  \rn\setminus \{0\}\ \text{by } 
\varphi(t,\theta) = e^{-t}\theta.
\end{equation}
The function $\varphi$ is a conformal diffeomorphism and $\varphi^*\hbox{Eucl}=e^{-2t}\left(dt^2+\hbox{can}_{n-1}\right)$, where $\hbox{can}_{n-1}$ is the canonical metric on $\mathbb{S}^{n-1}$. We write
$$U_\varphi(t,\theta):=e^{-\frac{n-2}{2}t}U(\varphi(t,\theta)).$$
By the invariance of the conformal Laplacian $L_g:=-\Delta_g+\frac{n-2}{4(n-1)}R_g$, we see that 
\begin{equation} \label{conf:lap}
\begin{aligned}
(-\Delta U)\circ \varphi(t,\theta)& = L_{\varphi^\star \hbox{Eucl}}(U\circ \varphi)
=e^{\frac{n+2}{2}t}L_{dt^2+\hbox{can}_{n-1}} e^{-\frac{n-2}{2}t}U\circ \varphi(t,\theta))\\
&=e^{\frac{n+2}{2}t}\left(-U_\varphi^{\prime\prime}(t)-\Delta_{\hbox{can}_{n-1}}U_\varphi+ \frac{(n-2)^2}{4}U_\varphi\right). 
\end{aligned} \end{equation}
Since $U$ is radially symmetrical, then $U_\varphi(t,\theta)$ is independent of $\theta$ and we define $v(t):=U_\varphi(t,\theta)$ for all $t\in\rr$ and $\theta\in \mathbb{S}^{n-1}$. It then follows from \eqref{conf:lap}, equation \eqref{eq:lim} and the definion of $v$ that
\begin{equation}\label{eq:v:2}
-v^{\prime\prime}+ \frac{(n-2)^2}{4}v= v^{\crits-1}\hbox{ in }\rr, \hbox{ with }v>0.
\end{equation}
Multiplying by $v^\prime$ and integrating, we get that there exists $K\in\rr$ such that
\begin{equation}\label{int:v}
-\frac{(v^\prime(t))^2}{2}+\frac{(n-2)^2}{8}v(t)^2-\frac{v(t)^{\crits}}{\crits}=K\hbox{ for all }t\in\rr.
\end{equation}
We define
$$K_{n,s}:=\frac{\crits-2}{2\cdot \crits}\left(\frac{(n-2)^2}{4}\right)^{\frac{\crits}{\crits-2}}.$$
A classical ODE analysis (see, for instance, Caffarelli--Gidas--Spruck \cite{cgs}) yields:

\medskip\noindent$\bullet$ Either $K=K_{n,s}$, and then $v\equiv \left(\frac{(n-2)^2}{4}\right)^{\frac{1}{\crits-2}}$ is constant,

\smallskip\noindent$\bullet$ Or $0<K<K_{n,s}$, and then there exists $T\in\rr$ such that $v\equiv v_K(\cdot-T)$, where $v_K$ is the unique nonconstant periodic solution to \eqref{eq:v:2} and \eqref{int:v} that achieves its minimum at $0$.

\smallskip\noindent$\bullet$ Or $K=0$ and then there exists $T\in\rr$ such that $v\equiv v_0(\cdot-T)$, where
$$v_0(t):=\left((n-s)(n-2)\right)^{\frac{1}{\crits-2}}\left(e^{\frac{2-s}{2}t}+e^{\frac{-(2-s)}{2}t}\right)^{-\frac{n-2}{2-s}}\hbox{ for all }t\in\rr.$$

\medskip\noindent In term of $U_\varphi$, the Pohozaev integral rewrites
$$\begin{aligned} 
P(U) =& \int_{\mathbb{S}^{n-1}}\left(-\frac{(U_\varphi^\prime(t,\theta))^2}{2}+\frac{(n-2)^2}{8}U_\varphi(t,\theta)^2-\frac{U_\varphi(t,\theta)^{\crits}}{\crits}\right)\, dv_{\hbox{can}}\\
& + \int_{\mathbb{S}^{n-1}} \frac{1}{2}|\nabla_\theta U_\varphi(t,\theta)|_{\hbox{can}}^2 
\, dv_{\hbox{can}}
\end{aligned}$$
for all $t\in\rr$. Since $v(t)=U_\varphi(t,\theta)$ for all $(t,\theta)\in\rr\times \mathbb{S}^{n-1}$, we get that
$$P(U)=\omega_{n-1}\left( -\frac{(v^\prime(t))^2}{2}+\frac{(n-2)^2}{8}v(t)^2-\frac{v(t)^{\crits}}{\crits}\right)$$
for all $t\in\rr$. Therefore, it follows from \eqref{int:v} that $P(U)=\omega_{n-1}K$.

\medskip\noindent The conclusion of Proposition \ref{prop:poho:U} then follows from the distinction above between the cases $K=0$ and $K>0$, and writing $U$ 
in terms of $v_K$, $K\geq 0$. \end{proof}

\subsection{The asymptotic Pohozaev integral for $q\leq \crit-1$} 
Let $u\in C^\infty(B_1(0)\setminus\{0\})$ be a positive solution of \eqref{eq:5}, namely $u$ satisfies
\begin{equation}\label{eq:00}
-\Delta u= \frac{u^{\crits-1}}{|x|^s}-\mu u^q \hbox{ in }\ballp.
\end{equation}
Recall that if $q>\crit-1$, then $0$ is a removable singularity for $u$. In this section, we assume that $q\leq \crit-1$.  For any $x\in \ballp$ and $t\geq 0$, we define 
\neweq{F-mu} 
\left\{ \begin{aligned}
& f_{\mu,q}(x,t):=\frac{t^{\crits-1}}{|x|^s}-\mu t^q \\
& F_{\mu,q}(x,t):=\int_0^t f_{\mu,q}(x,\xi)\,d\xi=\frac{t^{\crits}}{\crits|x|^s}-\mu\frac{t^{q+1}}{q+1}.
\end{aligned} \right.
\endeq
For any $r\in (0,1)$, we define the Pohozaev-type integral by
\neweq{def:P:invar} P_{r}^{(q)}(u):=\int_{\partial B_r(0)} \left[ (x,\nu) \left(\frac{|\nabla u|^2}{2}-F_{\mu,q}(x,u)\right)-T(x,u)\, \partial_\nu u\right]d\sigma,
\endeq where $T(x,u)$ is given by \eqref{nott} with $u$ instead of $v$.  By Proposition~\ref{prop:poho}, for every $0<r_1<r_2<1$, we have 
\neweq{id:poho:invar} P_{r_2}^{(q)}(u)-P_{r_1}^{(q)}(u)=
\frac{(n-2)}{2(q+1)}(\crit-1-q)\mu \int_{B_{r_2}(0)\setminus B_{r_1}(0)} u^{q+1}\, dx.
\endeq
\begin{prop}\label{prop:def:asymp} Let $u\in C^\infty(B_1(0)\setminus\{0\})$ be a positive solution of \eqref{eq:00}. We assume that $q\leq \crit-1$ and $\limsup_{x\to 0} |x|^{\frac{n-2}{2}} u(x)<\infty$. Then $(P^{(q)}_r(u))$ has a limit as $r\to 0$. We then define the {\it asymptotic Pohozaev integral} as 
\neweq{poh} P^{(q)}(u):=\lim_{r\to 0}P_r^{(q)}(u).\endeq
\end{prop}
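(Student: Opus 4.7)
The plan is to deduce the existence of the limit directly from the monotone-type identity \eqref{id:poho:invar}, by showing that $r\mapsto P_r^{(q)}(u)$ is Cauchy as $r\to 0^+$. From \eqref{id:poho:invar}, for any $0<r_1<r_2<1$,
\[
\bigl|P_{r_2}^{(q)}(u)-P_{r_1}^{(q)}(u)\bigr|=\frac{(n-2)(\crit-1-q)\mu}{2(q+1)}\int_{B_{r_2}(0)\setminus B_{r_1}(0)}u^{q+1}\,dx,
\]
so it suffices to control the right-hand side.

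I would split the analysis into two cases according to the sign of $\crit-1-q$. If $q=\crit-1$, the factor $(\crit-1-q)$ vanishes, and \eqref{id:poho:invar} shows that $r\mapsto P_r^{(q)}(u)$ is actually constant on $(0,1)$; in particular the limit exists trivially. If $q<\crit-1$, I would invoke the standing hypothesis $\limsup_{x\to 0}|x|^{\frac{n-2}{2}}u(x)<\infty$ to get a constant $C>0$ with $u(x)\leq C|x|^{-\frac{n-2}{2}}$ on $B_{1/2}(0)\setminus\{0\}$ (cf.\ \eqref{control}). Then
\[
\int_{B_{1/2}(0)}u^{q+1}\,dx\leq C^{q+1}\int_{B_{1/2}(0)}|x|^{-\frac{(n-2)(q+1)}{2}}\,dx<\infty,
\]
because $\frac{(n-2)(q+1)}{2}<n$ is equivalent to $q+1<\crit$, which holds.

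Therefore $u^{q+1}\in L^1(B_{1/2}(0))$, and by absolute continuity of the Lebesgue integral, $\int_{B_{r_2}(0)\setminus B_{r_1}(0)} u^{q+1}\,dx\to 0$ as $r_1,r_2\to 0^+$. Plugged into the displayed identity, this shows that the net $(P_r^{(q)}(u))_{r\in(0,1)}$ is Cauchy as $r\to 0$, hence admits a finite limit, which we define to be $P^{(q)}(u)$ as in \eqref{poh}. There is no substantive obstacle here: the only thing to verify is the sharp integrability exponent $q+1<\crit$, which is precisely what the pointwise bound $u(x)\lesssim |x|^{-\frac{n-2}{2}}$ buys us in the subcritical regime $q<\crit-1$.
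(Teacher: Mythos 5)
Your proof is correct and follows essentially the same route as the paper's: both cases are handled the same way, and in the subcritical case $q<\crit-1$ the paper likewise combines \eqref{id:poho:invar} with the pointwise bound $u(x)\le C|x|^{-(n-2)/2}$ to bound the increment by $\int_{r_1}^{r_2}r^{\,n-1-(q+1)(n-2)/2}\,dr\lesssim r_2^{\frac{n-2}{2}(\crit-1-q)}$. Your phrasing via $L^1$-integrability of $u^{q+1}$ plus absolute continuity of the integral is just a slightly more abstract packaging of the same computation, and rests on the identical exponent check $q+1<\crit$.
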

\smallskip\noindent{\it Proof of Proposition \ref{prop:def:asymp}:} When $q=\crit-1$, $P^{(q)}_r(u)$ is independent of $r$ by \eqref{id:poho:invar} and the result is clear. We assume that $q<\crit-1$. It follows from \eqref{id:poho:invar} and $\limsup_{x\to 0} |x|^{\frac{n-2}{2}} u(x)<\infty$ that there exists $C>0$ such that for any $r_2\in (0,1)$ such that $r_1<r_2$, fixed, we have that 
$$0<P_{r_2}^{(q)}(u)-P_{r_1}^{(q)}(u)\leq C \int_{r_1}^{r_2}r^{n-(q+1)(n-2)/2-1}\, dr \leq C r_2^{\frac{n-2}{2}\left(\crit-1-q\right)}.$$
Therefore the limit of $P_{r}^{(q)}(u)$ as $r\to 0$ exists. This ends Proposition \ref{prop:def:asymp}.\qed

\section{Blow-up when $q<\crit-1$, Part I: first limiting profile}\label{sec:blowup:1}
Throughout this section, we let $u\in C^\infty(\ballp)$ be a positive solution to 
\begin{equation}\label{eq:50}
-\Delta u= \frac{u^{\crits-1}}{|x|^s}-\mu u^q \hbox{ in }\ballp
\end{equation}
such that there exists $C>0$ such that 
\begin{equation}\label{hyp:control}
|x|^{\frac{n-2}{2}}u(x)\leq C\hbox{ for all }x\in B_{1/2}(0)\setminus\{0\}
\end{equation}
and
\begin{equation}\label{hyp:inf:sup}
\liminf_{x\to 0}|x|^{\frac{n-2}{2}}u(x)=0\; ,\; \limsup_{x\to 0}|x|^{\frac{n-2}{2}}u(x)>0.
\end{equation}

\medskip\noindent The main point of this section is that, when $q<\crit-1$, \eqref{hyp:control} and \eqref{hyp:inf:sup} hold, then the limit $U$ obtained in Lemma \ref{lem:prelim:1} is either identically zero or a positive nonsingular regular solution to the limit equation 
\eqref{eq:lim} described by Proposition~\ref{prop:poho:U}. In particular, the singular solutions of \eqref{eq:lim} are ruled out. The case $q=\crit-1$ will be studied in detail in Section \ref{sec:q:crit}.
When $q>\crit-1$, then Corollary \ref{co1} gives that any solution to \eqref{eq:00} has a removable singularity, and then \eqref{hyp:inf:sup} cannot hold. 

\medskip\noindent We first prove that the limit obtained in Lemma \ref{lem:prelim:1} is a nonsingular solution to the limit equation \eqref{eq:lim}.

\begin{prop}\label{prop:prelim:1} Let $u\in C^\infty(\ballp)$ be a positive solution to \eqref{eq:50} such that $q<\crit-1$ and \eqref{hyp:control}, \eqref{hyp:inf:sup} hold. Then the asymptotic Pohozaev integral vanishes: $P^{(q)}(u)=0$. In particular, for any sequence $(t_i)\in (0,+\infty)$ such that $t_i\to 0$ as $i\to +\infty$, by defining $u_i(x):=t_i^{\frac{n-2}{2}}u(t_i x)$ for all $x\in B_{t_i^{-1}}(0)\setminus \{0\}$, then up to a subsequence, we have the following convergence in $C^2_{\rm loc}(\rnp)$ as $i\to +\infty$:
\begin{equation}\label{dicho:ui}
u_i\to \left\{\begin{array}{l}
\hbox{either }0\\
\hbox{or }U_\lambda:=c_{n,s}\left(\frac{\lambda^{1-\frac{s}{2}}}{\lambda^{2-s}+|\cdot|^{2-s}}\right)^{\frac{n-2}{2-s}}\hbox{ for some }\lambda>0
\end{array}\right\}.
\end{equation}
\end{prop}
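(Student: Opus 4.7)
The proof combines two ingredients: (i) the scaling behavior of the Pohozaev integral $P^{(q)}_r(u)$ under the rescaling $u_i(x) := t_i^{(n-2)/2} u(t_i x)$, which relates the asymptotic invariant $P^{(q)}(u)$ to the Pohozaev integral $P(U)$ of any subsequential blow-up limit, and (ii) the strong minimum principle, which rigidifies nonnegative superharmonic functions on $\rnp$.

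My first step would be to apply Lemma~\ref{lem:prelim:1} to an arbitrary sequence $t_i \to 0^+$: up to extraction, $u_i \to U$ in $C^2_{\mathrm{loc}}(\rnp)$, where $U \geq 0$ satisfies $-\Delta U = U^{\crits-1}/|x|^s$ on $\rnp$ (the nonlinear perturbation vanishes in the limit because $q < \crit-1$ forces $\lambda = 0$ in \eqref{eq:lim:2}). Since $U$ is nonnegative and superharmonic, the strong minimum principle gives the dichotomy: either $U \equiv 0$ on $\rnp$, or $U > 0$ throughout $\rnp$.

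Next I would rewrite $P^{(q)}_{r t_i}(u)$ in terms of $u_i$ via the substitution $y = t_i x$ in \eqref{def:P:invar}. A direct calculation, tracking the scaling of each term ($u \sim t_i^{-(n-2)/2}$, $\nabla u \sim t_i^{-n/2}$, $|y|^s \sim t_i^s$, $d\sigma_y \sim t_i^{n-1} d\sigma_x$), yields
\begin{equation*}
P^{(q)}_{r t_i}(u) = \int_{\partial B_r(0)} \left[(x,\nu)\left(\frac{|\nabla u_i|^2}{2} - \frac{u_i^{\crits}}{\crits\, |x|^s} + \mu\, t_i^{\delta}\, \frac{u_i^{q+1}}{q+1}\right) - T(x, u_i)\, \partial_\nu u_i\right] d\sigma,
\end{equation*}
with $\delta := n - (n-2)(q+1)/2$, which is strictly positive precisely because $q < \crit-1$. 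Passing $i \to \infty$ using $C^2_{\mathrm{loc}}$ convergence and $t_i^{\delta} \to 0$, together with the conformal invariance $P_r(U) = P(U)$ from \S\ref{sec:lim:eq}, gives $\lim_{i} P^{(q)}_{r t_i}(u) = P(U)$. On the other hand, Proposition~\ref{prop:def:asymp} gives $\lim_{i} P^{(q)}_{r t_i}(u) = P^{(q)}(u)$ since $r t_i \to 0$. Thus $P(U) = P^{(q)}(u)$ for every subsequential limit $U$.

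The value $P^{(q)}(u) = 0$ is then extracted by exploiting the hypothesis $\liminf_{x \to 0} |x|^{(n-2)/2} u(x) = 0$: pick $z_i \to 0$ with $|z_i|^{(n-2)/2} u(z_i) \to 0$, set $t_i := |z_i|$, and extract a subsequence so that $z_i / |z_i| \to \theta_0 \in \partial B_1(0)$. The identity $u_i(z_i / |z_i|) = |z_i|^{(n-2)/2} u(z_i)$ together with uniform $C^2_{\mathrm{loc}}$ convergence forces $U(\theta_0) = 0$, hence $U \equiv 0$ by the strong minimum principle, and $P^{(q)}(u) = P(U) = 0$. Returning to an arbitrary sequence $t_i \to 0^+$, every subsequential limit $U$ satisfies $P(U) = 0$, so Proposition~\ref{prop:poho:U} (combined with the initial dichotomy $U \equiv 0$ or $U > 0$) delivers \eqref{dicho:ui}. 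The main technical ingredient is the rescaling identity above; the hypothesis $q < \crit-1$ enters decisively to ensure $\delta > 0$, which is exactly why the nonconformal term drops out in the limit and why the classical (CGS) profile with $P(U) > 0$ is ruled out.
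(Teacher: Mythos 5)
Your proof is correct and follows essentially the same route as the paper: you invoke Lemma~\ref{lem:prelim:1} to extract a nonnegative limit $U$ of the rescalings, compute the scaling identity for $P^{(q)}_{rt_i}(u)$ (the exponent $\delta = n-(n-2)(q+1)/2 = \tfrac{n-2}{2}(\crit-1-q)$ matches the paper's), identify $P(U)=P^{(q)}(u)$ via Proposition~\ref{prop:def:asymp}, use the $\liminf$ hypothesis together with the strong minimum principle to get a vanishing subsequential limit and hence $P^{(q)}(u)=0$, and then invoke Proposition~\ref{prop:poho:U} to obtain the dichotomy. The only cosmetic differences are that you track the rescaling on $\partial B_r(0)$ for general $r$ while the paper fixes $r=1$, and you write out the scaling exponent more explicitly; neither affects the argument.
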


\smallskip\noindent{\it Proof of Proposition \ref{prop:prelim:1}:} The convergence in \eqref{jk} to a nonnegative solution $U\in C^2(\rnp)$ of \eqref{eq:lim} is a consequence of Lemma \ref{lem:prelim:1}. With a change of variable, the Pohozaev integral  $P_{t_i}^{(q)}(u)$ (see \eqref{def:P:invar}) equals
$$\int_{\partial B_1(0)}\left[(x,\nu)\left(\frac{|\nabla u_i|^2}{2}-\frac{u_i^{\crits}}{\crits|x|^s}+\mu t_i^{\frac{n-2}{2}(\crit-1-q)} \frac{u_i^{q+1}}{q+1}\right)- T(x, u_i)\partial_\nu u_i\right]\, d\sigma$$
for all $i$. Therefore, letting $i\to +\infty$ and using the convergence of $P_r^{(q)}(u)$ to $P^{(q)}(u)$ as $r\to 0$ and of $(u_i)$ to $U$ as $i\to +\infty$, we get that
\begin{equation}\label{lim:P:u}
P^{(q)}(u)=\int_{\partial B_1(0)}\left[(x,\nu)\left(\frac{|\nabla U|^2}{2}-\frac{U^{\crits}}{\crits|x|^s}\right)- T(x,U)\partial_\nu U\right]\, d\sigma.
\end{equation}

\medskip\noindent We claim that 
\begin{equation}\label{invar:0}
P^{(q)}(u)=0.
\end{equation}
We prove the claim. It follows from \eqref{hyp:inf:sup} that there exists $(x_i)\in \ballp$ such that $x_i\to 0$ as $i\to +\infty$ and $|x_i|^{\frac{n-2}{2}}u(x_i)\to 0$ as $i\to +\infty$. We let $t_i:=|x_i|$ and define $u_i$ as above, and we let $\tilde{U}\in C^2(\rnp)$ be its limit in $C^2$. In particular, $u_i(\theta_i)=o(1)$ as $i\to +\infty$ where $\theta_i:=x_i/|x_i|\to \theta_\infty\in \partial B_1(0)$ as $i\to +\infty$. This yields $\tilde{U}(\theta_\infty)=0$. Since $\tilde{U}\geq 0$, it then follows from Hopf's strong comparison principle that $\tilde{U}\equiv 0$. Therefore, it follows from \eqref{lim:P:u} that \eqref{invar:0} holds. This proves the claim.

\medskip\noindent We claim that either $U\equiv 0$, or $U\equiv U_\lambda$ for some $\lambda>0$. We prove the claim. Since $U\geq 0$, it follows from Hopf's strong comparison principle that either $U\equiv 0$ or $U>0$. We assume that $U>0$. Then it follows from \eqref{lim:P:u} that $P(U)=P^{(q)}(u)$ (see also \eqref{poho:invar:r}), and then from the preceding claim, we get that $P(U)=0$. It then follows from Proposition \ref{prop:poho:U} that there exists $\lambda>0$ such that $U\equiv U_\lambda$. This proves the claim.

\medskip\noindent These claims prove Proposition \ref{prop:prelim:1}. \hfill$\Box$

\medskip\noindent For any $r\in (0,1)$, we define
\begin{equation}\label{def:w}
w(r):=r^{\frac{n-2}{2}}\bu(r),
\end{equation}
where, for any $f\in C^0(\ballp)$, we define 
$$\bar{f}(r):=\frac{1}{|\partial B_r(0)|}\int_{\partial B_r(0)}f\, d\sigma.$$ 
We now construct specific radii at which $u$ behaves nicely after rescaling.

\begin{prop}\label{prop:construct:radii} Let $u\in C^\infty(\ballp)$ be a positive solution to \eqref{eq:50} such that $q<\crit-1$ and \eqref{hyp:control}, \eqref{hyp:inf:sup} hold. Then there exist two sequences $(r_k)_k,(\tau_k)_k$ of positive numbers going to $0$ as $k\to +\infty$ such that for all $k\in\nn$,
\begin{equation}\label{mono:w:bis}
w'(r)<0\hbox{ for all }r\in (r_{k+1}, \tau_{k+1})\hbox{ and }w'(r)>0 \hbox{ for all }r\in (\tau_{k+1}, r_k).
\end{equation}
Moreover,
$$\lim_{k\to +\infty}r_k^{\frac{n-2}{2}}u(r_k \cdot)=U_1(\cdot)=c_{n,s}\left(\frac{1}{1+|\cdot|^{2-s}}\right)^{\frac{n-2}{2-s}}\hbox{ in }C^2_{\rm loc}(\rnp),$$
and 
$$r_{k+1}=o(\tau_{k+1})\hbox{ and }\tau_{k+1}=o(r_{k})\hbox{ as }k\to +\infty.$$
The $r_k$'s and $\tau_k$'s are the only critical points of $w$ in $(0,\delta_0]$ for some $\delta_0\in (0,1)$ small.
\end{prop}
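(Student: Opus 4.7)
The strategy has three stages: extract a sequence of bubble scales $r_k$ from the oscillation of $w$, identify the rescaling limit as $U_1$ using non-degeneracy, and separate consecutive extrema.

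First I would use Lemma~\ref{lem:harnack} to transfer \eqref{hyp:inf:sup} to $w$: for any $r\in(0,1/2)$, $w(r)$ is comparable (up to the Harnack constant) to $r^{(n-2)/2}u(x)$ for every $x\in\partial B_r(0)$, so $\liminf_{r\to 0}w(r)=0$ and $\limsup_{r\to 0}w(r)=:L>0$. Hence $w$ is not eventually monotonic, and a standard argument combining $\limsup w=L$ with $\liminf w=0$ produces a strictly decreasing sequence of local maxima $(r_k)_k\downarrow 0$ of $w$ with $w(r_k)\geq L/2$.

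For this sequence I would apply Proposition~\ref{prop:prelim:1} with $t_i=r_k$ to get, up to a subsequence, $u_k:=r_k^{(n-2)/2}u(r_k\cdot)\to U$ in $C^2_{\rm loc}(\rnp)$ with $U\in\{0\}\cup\{U_\lambda:\lambda>0\}$. The lower bound $u_k(\theta)\geq C^{-1}w(r_k)\geq C^{-1}L/2$ for $|\theta|=1$ provided by Lemma~\ref{lem:harnack} rules out the zero limit, so $U=U_{\lambda_0}$ for some $\lambda_0>0$. The rescaled radial function $w_k(s):=w(r_k s)=s^{(n-2)/2}\bar u_k(s)$ then converges in $C^2_{\rm loc}((0,+\infty))$ to $s\mapsto s^{(n-2)/2}U_{\lambda_0}(s)$. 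A direct computation using the explicit form of $U_\lambda$ shows this limit has a \emph{unique} critical point on $(0,+\infty)$, located at $s=\lambda_0$, and this is a non-degenerate maximum (one finds the second derivative of the log to equal $-(n-2)(2-s)/4<0$ at $s=\lambda_0$). Since $r_k$ is critical for $w$, the point $s=1$ is critical for $w_k$; passing to the $C^1$ limit forces $\lambda_0=1$. Uniqueness of the limit upgrades subsequential convergence to convergence of the full sequence, yielding the asserted $r_k^{(n-2)/2}u(r_k\cdot)\to U_1$.

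For the minima, I take $\tau_{k+1}\in[r_{k+1},r_k]$ realizing the minimum of $w$ on that interval; the endpoints being local maxima forces $\tau_{k+1}\in(r_{k+1},r_k)$ to be a local minimum. To prove $\tau_{k+1}/r_k\to 0$, I argue by contradiction: if $\tau_{k+1}/r_k\to c\in(0,1]$ along a subsequence, then $c$ would be a critical point of $s\mapsto s^{(n-2)/2}U_1(s)$ by $C^1$-convergence, whence $c=1$. But the non-degeneracy of the maximum at $s=1$, together with $C^2$-convergence, implies that $w_k'$ has a unique simple zero in a fixed neighbourhood of $1$ for $k$ large, contradicting the two distinct zeros at $s=1$ and $s=\tau_{k+1}/r_k\to 1$. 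Rescaling at $r_{k+1}$ and repeating the same argument gives $r_{k+1}/\tau_{k+1}\to 0$.

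The main obstacle is the final claim that $\{r_k\}\cup\{\tau_k\}$ exhausts the critical points of $w$ in $(0,\delta_0]$, together with the strict monotonicity \eqref{mono:w:bis}. The non-degeneracy argument above rules out extra critical points in each fixed annulus $(R^{-1}r_k,Rr_k)$ beyond $r_k$ itself, and analogously near $r_{k+1}$. The delicate case is a hypothetical critical point $\rho$ \emph{deep in a gap}, satisfying $\rho/r_k\to 0$ and $\rho/r_{k+1}\to+\infty$ simultaneously; eliminating such $\rho$ requires the refined pointwise control on the annular regions $B_{Rr_k}(0)\setminus B_{R^{-1}r_{k+1}}(0)$ developed in Section~\ref{sec:blowup:2}, which squeezes $u$ between two bubble profiles and thereby forces $w$ to be strictly monotonic on either side of $\tau_{k+1}$.
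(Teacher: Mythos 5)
Your opening moves (extract local maxima $r_k$ with $w(r_k)\geq L/2$, use the Harnack inequality to rule out the zero limit in Proposition~\ref{prop:prelim:1}, pin down $\lambda_0=1$ by the criticality of $s=1$ and non-degeneracy of the limit) are sound, and your contradiction argument for $\tau_{k+1}=o(r_k)$ and $r_{k+1}=o(\tau_{k+1})$ via $C^2$-convergence near a non-degenerate maximum is correct. However, the ``main obstacle'' you flag at the end is a genuine gap in your proposal, and the fix you propose is circular. You suggest eliminating a hypothetical critical point $\rho$ deep in a gap ($\rho/r_k\to 0$, $\rho/r_{k+1}\to\infty$) by appealing to the refined pointwise control of Section~\ref{sec:blowup:2}. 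But Proposition~\ref{prop:sharp:lim} is formulated in terms of the $(r_k)$ from Proposition~\ref{prop:construct:radii}, and its proof (Step~1, inequality~\eqref{est:epsilon}) explicitly invokes the monotonicity~\eqref{mono:w} to set up the barrier on the annulus $[\rho_k,\sigma_k]$. You cannot use Section~\ref{sec:blowup:2} to establish the very monotonicity statement it takes as input.

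The ingredient you are missing is the paper's convexity-below-threshold lemma (Step~1 of the paper's proof): setting $W(t):=w(e^{-t})$, there is $\eps_0>0$ such that $W(t)\leq\eps_0$ implies $W''(t)>0$. This follows by passing to Emden--Fowler coordinates via the conformal Laplacian (as in~\eqref{conf:lap}) and using the spherical Harnack inequality to bound $\overline{|x|^{-s}u^{\crits-1}-\mu u^q}$ by $C\,\bar u^{\crits-1}/r^s$, yielding $-W''+\frac{(n-2)^2}{4}W\leq C W^{\crits-1}$. Once one has this, \emph{every} critical point of $w$ in the gap region is automatically a strict local minimum (since the gap is where $w$ is small), and between two such minima $W$ would be convex, which is impossible; hence the gap contains at most one critical point. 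Combined with your non-degeneracy argument in a fixed annulus around $r_k$, this gives the uniqueness of critical points without any circular appeal. This is also why the paper structures the proof the other way round: it first extracts the set $\{\tau_k\}$ of all sub-threshold critical points (countable by non-degeneracy, infinite by the oscillation of $w$), and only then produces a unique maximum $r_k$ in each interval $(\tau_{k+1},\tau_k)$ and identifies the blow-up profile. Without the convexity lemma, your exhaustion and strict-monotonicity claims remain unproved.
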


\medskip\noindent{\it Proof of Proposition \ref{prop:construct:radii}:} We divide the proof into three steps. 
It follows from \eqref{hyp:control}, \eqref{hyp:inf:sup} and the Harnack inequality of Lemma \ref{lem:harnack} that there exists $C>0$ such that
\begin{equation}\label{bnd:w}
w(r)\leq C\hbox{ for all }r\in (0,1/2)\hbox{ and }\liminf_{r\to 0}w(r)=0.
\end{equation}
For $t>0$, we define
$$W(t):=w(e^{-t})=e^{-\frac{n-2}{2}t}\bu(e^{-t}).$$ 
Therefore, \eqref{bnd:w} rewrites
\begin{equation}\label{bnd:W}
W(t)\leq C\hbox{ for all }t\geq \ln 2\hbox{ and }\liminf_{t\to +\infty}W(t)=0.
\end{equation}

\medskip\noindent{\bf Step 1:} We claim that there exists $\eps_0>0$ such that 
\begin{equation}\label{w:convex}
\hbox{ for any }t\geq \ln 2, \hbox{ then }W(t)\leq \eps_0\,\Rightarrow W^{\prime\prime}(t)>0.
\end{equation}

\smallskip\noindent{\it Proof of the claim:} We use the conformal diffeomorphisme $\varphi$ defined in \eqref{def:phi}. Writing
$$u_\varphi(t,\theta):=e^{-\frac{n-2}{2}t}u(\varphi(t,\theta)),$$
for all $t>\ln 2$ and $\theta\in\mathbb{S}^{n-1}$, it follows from the invariance of the conformal Laplacian used in \eqref{conf:lap} that 
\begin{eqnarray*}
&&\left(\overline{-\Delta u}\right)\circ\varphi(t,\theta)= (-\Delta \bu)\circ \varphi(t,\theta)= L_{\varphi^\star \hbox{Eucl}}\bu\circ \varphi\\
&&=e^{\frac{n+2}{2}t}L_{dt^2+\hbox{can}_{n-1}} e^{-\frac{n-2}{2}t}\bu(e^{-t})=e^{\frac{n+2}{2}t}\left(-W^{\prime\prime}(t)+\frac{(n-2)^2}{4}W(t)\right).
\end{eqnarray*}
Independently, it follows from the Harnack inequality of Lemma \ref{lem:harnack} that
$$\left(\overline{\frac{u^{\crits-1}}{|x|^s}-\mu u^q}\right)(r)\leq C_1^{\crits-1}\frac{\bu(r)^{\crits-1}}{r^s}$$
for all $r\in (0,1/2)$. Therefore, equation \eqref{eq:50} yields
$$-W^{\prime\prime}(t)+\frac{(n-2)^2}{4}W(t)\leq C_1^{\crits-1}W^{\crits-1}$$
for $t>\ln 2$ large enough. We the get the conclusion by taking $\eps_0:=\left(\frac{(n-2)^2}{8 C_1^{\crits-1}}\right)^{\frac{1}{\crits-2}}$. This proves \eqref{w:convex}, and therefore the claim. This ends Step 1.\hfill$\Box$

\medskip\noindent{\bf Step 2:} We claim that there exists a sequence $(\tau_k)_k\in (0,1)$ that is decreasing, converging to $0$ and such that
$$\left\{r\in (0,1/2]/\, w'(r)=0\hbox{ and }w(r)\leq \eps_0\right\}=\{\tau_k/\, k\in\nn\}.$$
\smallskip\noindent{\it Proof of the claim:} Indeed, this set is at most countable since any critical point of this set  is nondegenerate due to \eqref{w:convex} (note that the $W^{\prime\prime}$ and $w^{\prime\prime}$ are proportional at a critical point). It is also infinite, since, otherwise, there exists $a>0$ such that $w$ has no critical point in $(0,a)$ below the threshold $\eps_0$, which then yields $\lim_{r\to 0}w(r)=0$ due to \eqref{bnd:w}. This then yields $|x|^{\frac{n-2}{2}}u(x)\to 0$ as $x\to 0$, contradicting \eqref{hyp:inf:sup}. This proves the claim and ends Step 2.\hfill$\Box$

\medskip\noindent{\bf Step 3:} We claim that for any $k\in\nn$, there exists a unique $r_k\in (\tau_{k+1},\tau_k)$ such that $w^\prime(r_k)=0$. More precisely, we have that
\begin{equation}\label{mono:w}
w'(r)>0\hbox{ for all }r\in (\tau_{k+1}, r_k)\hbox{ and }w'(r)<0 \hbox{ for all }r\in (r_k, \tau_k).
\end{equation}
Moreover,
\begin{equation}\label{lim:u:k}
\lim_{k\to +\infty}r_k^{\frac{n-2}{2}}u(r_k \cdot)=U_1(\cdot)=c_{n,s}\left(\frac{1}{1+|\cdot|^{2-s}}\right)^{\frac{n-2}{2-s}}\hbox{ in }C^2_{\rm loc}(\rnp).
\end{equation}
Note that no extraction of subsequence is required here.

\smallskip\noindent{\it Proof of the claim:} We define $r_k\in (\tau_{k+1},\tau_k)$ such that $$w(r_k)=\max \{w(r)/\, r\in [\tau_{k+1},\tau_k]\}.$$Since $w^\prime(\tau_{k+1})=w^\prime(\tau_k)=0$, it follows from \eqref{w:convex} that $w^{\prime\prime}(\tau_k),w^{\prime\prime}(\tau_{k+1})>0$. Therefore $\tau_{k+1}<r_k<\tau_k$, and $w^\prime(r_k)=0$.

\medskip\noindent We claim that $w(r_k)>\eps_0$. Otherwise, one has that $w(r)\leq \eps_0$ for all $r\in [\tau_{k+1},\tau_k]$, and therefore, it follows from \eqref{w:convex} that $W^{\prime\prime}(t)>0$ on $[-\ln\tau_k,-\ln\tau_{k+1}]$: this is a contradiction since $W^\prime$ vanishes at the boundary of this interval. 

\medskip\noindent We define $u_k(x):=r_k^{\frac{n-2}{2}}u(r_k x)$ for all $x\in B_{r_k^{-1}}(0)\setminus\{0\}$. It follows from Lemma \ref{lem:prelim:1} that, up to a subsequence, $(u_k)$ goes to $U\in C^2(\rnp)$ in $C^2_{\rm loc}(\rnp)$ as $k\to +\infty$. Since $w(r_k)\geq \eps_0$, we get that $\overline{u_k}(1)\geq \eps_0$, and then $\overline{U}(1)\geq \eps_0$, and then $U\not\equiv 0$. Therefore, it follows from Proposition \ref{prop:prelim:1} that there exists $\lambda>0$ such that $U\equiv U_\lambda$. The equality $w^\prime(r_k)=0$ rewrites as
$$\frac{d}{dr}\left(r^{\frac{n-2}{2}}\overline{u_k}(r)\right)_{r=1}=0.$$
Passing to the limit $k\to +\infty$ yields $\frac{d}{dr}\left(r^{\frac{n-2}{2}}\overline{U}(r)\right)_{r=1}=0$. Since $U\equiv U_\lambda$, the explicit computation of this derivative yields $\lambda=1$. Therefore, up to a subsequence, $u_k\to U_1$ in $C^2_{\rm loc}(\rnp)$ as $k\to +\infty$. Since the limit is unique, indeed, it holds for $k\to +\infty$ with no extraction. This proves \eqref{lim:u:k}.

\medskip\noindent We now prove that $r_k$ is the unique critical point of $w$ in $(\tau_{k+1},\tau_k)$. We define $w_k(r):=w(r_k r)$ for any $r>0$ and $k\in\nn$. It follows from the convergence of $(u_k)$ to $U_1$ that $$\lim_{k\to +\infty}w_k(r)=r^{\frac{n-2}{2}}U_1(r)=c_{n,s} \left(\frac{r^{\frac{2-s}{2}}}{1+r^{2-s}}\right)^{\frac{n-2}{2-s}}\hbox{ for all }r>0.$$
Moreover, this convergence holds in $C^1$. Therefore,
\begin{equation}\label{lim:wk:w}
\lim_{r\to 0}\lim_{k\to +\infty}w_k(r)=\lim_{r\to +\infty}\lim_{k\to +\infty}w_k(r)=0.
\end{equation}
Then, there exists $s_k, t_k>0$ such that for $k$ large enough
$$\left\{\begin{array}{l}
s_k<r_k<t_k\\
w(s_k)=w(t_k)=\eps_0\; ,\;w(r)>\eps_0\hbox{ for all }r\in (s_k,t_k)\\
w^\prime(r)>0\hbox{ for all }r\in [s_k, r_k)\\
w^\prime(r)<0\hbox{ for all }r\in (r_k, t_k]
\end{array}\right.$$
It then follows from the definition of $\tau_k$ and $\tau_{k+1}$ that $\tau_{k+1}<s_k$ and $t_k<\tau_k$. Moreover, since $w$ has no critical point below the level $\eps_0$ on the interval $(\tau_{k+1},\tau_k)$, we then get \eqref{mono:w}. This proves the uniqueness of a critical point in $(\tau_{k+1},\tau_k)$, and ends Step 3. \hfill$\Box$

\medskip\noindent As a remark, it follows from \eqref{lim:wk:w} that $\tau_{k+1}=o(r_k)$ as $k\to +\infty$, and therefore
\begin{equation}\label{lim:rk}
r_{k+1}=o(r_k) \hbox{ as }k\to +\infty.
\end{equation}

\section{Blow-up when $q<\crit-1$, Part II: Sharp pointwise estimate}\label{sec:blowup:2}
Here again, we let $u\in C^\infty(\ballp)$ be a positive solution to 
\begin{equation}\label{eq:60}
-\Delta u= \frac{u^{\crits-1}}{|x|^s}-\mu u^q \hbox{ in }\ballp
\end{equation}
such that there exists $C>0$ such that 
\begin{equation}\label{hyp:control:2}
|x|^{\frac{n-2}{2}}u(x)\leq C\hbox{ for all }x\in B_{1/2}(0)\setminus\{0\},
\end{equation}
and
\begin{equation}\label{hyp:inf:sup:2}
\liminf_{x\to 0}|x|^{\frac{n-2}{2}}u(x)=0\; ,\; \limsup_{x\to 0}|x|^{\frac{n-2}{2}}u(x)>0.
\end{equation}
We assume that $q<\crit-1$. The objective of this section is to prove the following sharp estimate:

\begin{prop}\label{prop:sharp:lim} Let  $u\in C^\infty(\ballp)$ be a positive solution to \eqref{eq:60} such that $q<\crit-1$, \eqref{hyp:control:2} and \eqref{hyp:inf:sup:2} hold. Then, for any $R>0$, for any $x\in B_{R r_k}(0)\setminus B_{R^{-1}r_{k+1}}(0)$, we have that
\begin{equation*}
u(x)=\left(1+\eps_k(x)\right)\left(c_{n,s}\left(\frac{r_{k+1}^{\frac{2-s}{2}}}{r_{k+1}^{2-s}+|x|^{2-s}}\right)^{\frac{n-2}{2-s}}+c_{n,s}\left(\frac{r_{k}^{\frac{2-s}{2}}}{r_{k}^{2-s}+|x|^{2-s}}\right)^{\frac{n-2}{2-s}}\right),
\end{equation*}
where $\lim_{k\to +\infty}\eps_k=0$ uniformly on $B_{R r_k}(0)\setminus B_{R^{-1}r_{k+1}}(0)$. Here, the $(r_k)$'s are as in Proposition \ref{prop:construct:radii}.
\end{prop}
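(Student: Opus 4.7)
The plan is a contradiction/blow-up argument separating three scale regimes. Assume the conclusion fails: after extraction, there exist $R>0$, $\eps_0>0$ and points $y_k\in B_{Rr_k}(0)\setminus B_{R^{-1}r_{k+1}}(0)$ such that $|u(y_k)-M_k(y_k)|\geq \eps_0\, M_k(y_k)$, where $M_k:=U_{r_k}+U_{r_{k+1}}$. Set $t_k:=|y_k|$ and extract further so that $t_k/r_k\to a\in[0,R]$, $r_{k+1}/t_k\to b\in[0,R]$, and $y_k/t_k\to\theta\in \mathbb{S}^{n-1}$. \emph{Boundary cases.} If $a>0$, rescale $u_k(x):=r_k^{(n-2)/2}u(r_kx)\to U_1$ in $C^2_{\rm loc}(\rnp)$ by Proposition~\ref{prop:construct:radii}. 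Using the scaling identity from \eqref{U1-def}, one has $r_k^{(n-2)/2}U_{r_k}(r_kx)=U_1(x)$ and $r_k^{(n-2)/2}U_{r_{k+1}}(r_kx)=U_{r_{k+1}/r_k}(x)\to 0$ in $C^2_{\rm loc}(\rnp)$, since $r_{k+1}/r_k\to 0$. Evaluating at $y_k/r_k\to a\theta$ gives $u(y_k)=(1+o(1))M_k(y_k)$, a contradiction. The case $b>0$ is symmetric.

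Now consider the intermediate case $a=b=0$. The first key step is to show $w(t_k)\to 0$ where $w(r):=r^{(n-2)/2}\bu(r)$. Otherwise, extracting $w(t_k)\geq c>0$, Lemma~\ref{lem:prelim:1} applied to $\hat u_k(x):=t_k^{(n-2)/2}u(t_kx)$ yields a limit $\hat U\not\equiv 0$ solving $-\Delta\hat U=\hat U^{\crits-1}/|x|^s$ in $\rnp$ (the $\mu u^q$ contribution vanishes because $q<\crit-1$ and $t_k\to 0$); Proposition~\ref{prop:prelim:1} then forces $\hat U=U_\lambda$ for some $\lambda>0$. Consequently $w(t_kr)=r^{(n-2)/2}\overline{\hat u_k}(r)\to r^{(n-2)/2}U_\lambda(r)$ in $C^2_{\rm loc}((0,\infty))$, a limit that has a strict local maximum at $r=\lambda$. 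This forces $w$ to have a local maximum near $t_k\lambda\in(r_{k+1},r_k)$ for $k$ large, contradicting Proposition~\ref{prop:construct:radii}, which states that the only critical point of $w$ in $(r_{k+1},r_k)$ is the strict local minimum $\tau_{k+1}$. Hence $\hat u_k\to 0$ in $C^2_{\rm loc}(\rnp)$; in particular $t_k^{(n-2)/2}M_k(y_k)\to 0$.

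Now renormalize: write $\Lambda_k:=M_k(y_k)$, $\alpha_k:=U_{r_k}(y_k)/\Lambda_k$, $\beta_k:=U_{r_{k+1}}(y_k)/\Lambda_k$ (so $\alpha_k+\beta_k=1$; extract so $\alpha_k\to\alpha$, $\beta_k\to\beta=1-\alpha$), and set $\tilde u_k(x):=u(t_kx)/\Lambda_k$. Using the algebraic identity $(n-2)(\crits-2)/2=2-s$, the equation for $\tilde u_k$ reads
\[ -\Delta\tilde u_k=(t_k^{(n-2)/2}\Lambda_k)^{\crits-2}\frac{\tilde u_k^{\crits-1}}{|x|^s}-\mu\, t_k^{2-(n-2)(q-1)/2}(t_k^{(n-2)/2}\Lambda_k)^{q-1}\tilde u_k^q. \]
Both coefficients vanish as $k\to\infty$: the first from the previous step, and the second because additionally $2-(n-2)(q-1)/2>0$ when $q<\crit-1$. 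So $\tilde u_k$ is asymptotically harmonic. To obtain compactness I would establish the weak pointwise estimate $u(x)\leq C\, M_k(x)$ on $B_{Rr_k}(0)\setminus B_{R^{-1}r_{k+1}}(0)$ for all $k$ large, via a maximum-principle comparison: the linearized operator $L\phi:=-\Delta\phi-(u^{\crits-2}/|x|^s)\phi+\mu u^{q-1}\phi$ satisfies the comparison principle on the intermediate annulus (as in the proof of Proposition~\ref{prop:remove}), since $|x|^{(n-2)/2}u$ is small there; the boundary values on the two bubble-scale spheres are controlled by $CM_k$ thanks to the $C^2$-convergence at both scales.

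Given the weak estimate, $\tilde u_k$ is uniformly bounded on compacta of $\rnp$; elliptic regularity then yields $\tilde u_k\to\tilde U$ in $C^2_{\rm loc}(\rnp)$, with $\tilde U\geq 0$ harmonic and $\tilde U(x)\leq C(1+|x|^{-(n-2)})$. By B\^ocher's theorem combined with Liouville's theorem, $\tilde U(x)=A+B|x|^{-(n-2)}$ with $A,B\geq 0$. Matching at the two bubble scales identifies $A=\alpha$ and $B=\beta$: at the outer end, take $|x|=\eta r_k/t_k$ for small fixed $\eta>0$, rewrite $\tilde u_k(\eta r_k/t_k\,\omega)=u(\eta r_k\omega)/\Lambda_k$, apply the convergence $r_k^{(n-2)/2}u(r_k\cdot)\to U_1$ together with $U_1(\eta\omega)\to c_{n,s}$ as $\eta\to 0$ and the asymptotics $U_{r_k}(y_k)\sim c_{n,s}r_k^{-(n-2)/2}$, to deduce $\tilde u_k\to\alpha$ on this sphere; the inner end is handled symmetrically at the $r_{k+1}$-scale. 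Finally, evaluating at $\theta_k\to\theta$ gives $\tilde u_k(\theta_k)\to A+B=1$, contradicting $|\tilde u_k(\theta_k)-1|\geq \eps_0$. The main obstacle is the joint execution of the weak pointwise estimate and the matching of $(A,B)$ with $(\alpha,\beta)$: the former requires a careful construction of barriers on the intermediate annulus, and the latter a delicate two-scale gluing argument linking the rescaled limit $\tilde U$ to the bubble profiles on the original scales.
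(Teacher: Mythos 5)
Your approach is a genuinely different route from the paper's: you set up a blow-up/contradiction argument with three scale regimes, renormalize by $\Lambda_k := M_k(y_k)$ in the intermediate regime, and argue that $\tilde u_k := u(t_k\cdot)/\Lambda_k$ converges to a harmonic function $A+B|x|^{2-n}$ on $\rnp$ which you then match to the two bubbles. The paper instead avoids any compactness-via-barriers step entirely: it establishes only the \emph{almost-optimal} bound $u(x)\leq C_{\alpha,\beta}(r_{k+1}^{\alpha-\frac{n-2}{2}}|x|^{-\alpha}+r_k^{\beta-\frac{n-2}{2}}|x|^{-\beta})$ for $\alpha,\beta$ strictly in $(0,n-2)$ (its Step 1), and then derives the sharp estimate from the Green's function representation $u(x)=\int G(x,y)f_{\mu,q}(y,u(y))\,dy - \int_{\partial}\partial_\nu G\, u$, decomposing the domain of integration into five regions $D_{1,k,R},\dots,D_{5,k,R}$ and showing $A_1,A_2,A_3$ are negligible while $A_4,A_5$ produce the two bubble terms.

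The genuine gap in your plan is the ``weak pointwise estimate $u\leq CM_k$.'' You propose to obtain it via the comparison principle for $L\phi=-\Delta\phi-\frac{u^{\crits-2}}{|x|^s}\phi+\mu u^{q-1}\phi$ as in Proposition \ref{prop:remove}. But that argument rests on the supersolution computation $L(|x|^{-\alpha})\geq |x|^{-\alpha-2}[\alpha(n-2-\alpha)-(\cdots)]$, which requires $\alpha(n-2-\alpha)>0$, i.e.\ $\alpha\in(0,n-2)$ \emph{strictly}. On the intermediate annulus, $M_k(x)\sim c_{n,s}(r_{k+1}^{\frac{n-2}{2}}|x|^{2-n}+r_k^{-\frac{n-2}{2}})$ corresponds precisely to the limiting exponents $\alpha=n-2$ and $\alpha=0$, where the barrier inequality fails (indeed, $-\Delta M_k=\frac{U_{r_{k+1}}^{\crits-1}+U_{r_k}^{\crits-1}}{|x|^s}$ shows $M_k$ is a genuine solution, not a supersolution, of the conformally invariant part). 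The paper itself makes this point explicit: right after stating its Step 1 it notes that ``the limiting case $(\alpha,\beta)=(n-2,0)$ will be proved in Step 2,'' i.e.\ by the Green's representation, not by barriers. Moreover, the almost-optimal bound with $\alpha<n-2$ is \emph{not} enough for your compactness: at $|x|\sim t_k$ it gives $u\leq Ct_k^{-\frac{n-2}{2}}\bigl((r_{k+1}/t_k)^{\alpha-\frac{n-2}{2}}+(t_k/r_k)^{\frac{n-2}{2}-\beta}\bigr)$, and the ratio of the first term to $r_{k+1}^{\frac{n-2}{2}}t_k^{2-n}$ is $(t_k/r_{k+1})^{n-2-\alpha}\to+\infty$ in the intermediate regime, so $\tilde u_k$ is not a priori bounded. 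The two-scale matching of $(A,B)$ with $(\alpha,\beta)$ runs into the same issue, since it requires control on expanding annuli $|x|=\eta r_k/t_k\to\infty$, beyond what $C^2_{\rm loc}$ convergence gives. You flag both of these as ``obstacles'' at the end, but the barrier mechanism you invoke cannot close either one; the Green's function representation is precisely the paper's device for overcoming both simultaneously.
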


\smallskip\noindent{\it Proof of Proposition \ref{prop:sharp:lim}:} We fix $R_0>0$. Let $(x_k)_k\in \ballp$ be such that $R_0^{-1}r_{k+1}\leq |x_k|\leq R_0 r_k$ for all $k\in\nn$. For convenience, we define $W_k(x):=U_{r_{k+1}}(x)+U_{r_k}(x)$ for all $x\in\rn$ and all $k\in\nn$, where $U_\lambda$ is defined in \eqref{U1-def} for all $\lambda>0$. Proposition \ref{prop:sharp:lim} is equivalent to prove that
\begin{equation}\label{lim:u:W:1}
\lim_{k\to +\infty}\frac{u(x_k)}{W_k(x_k)}=1.
\end{equation}
By uniqueness, it is enough to get the convergence for a subsequence. Therefore, in the sequel, we will systematically prove our results up to a subsequence. The proof of \eqref{lim:u:W:1} is divided into two steps. Our first step is to prove a control of $u$ that is almost optimal. This step will be used in Step 2.3.5 below. Note that when $\alpha=\beta=(n-2)/2$, then \eqref{est:epsilon} is \eqref{hyp:control:2}. The limiting case $(\alpha,\beta)=(n-2,0)$ will be proved in Step 2 and will yield Proposition \ref{prop:sharp:lim}. Step 2 is itself divided in three subcases.

\medskip\noindent{\bf Step 1:} We fix $\alpha, \beta\in (0,n-2)$. We fix $R_0>0$. We claim that there exists $C_{\alpha,\beta}(R_0)>0$ such that for any $k\in\nn$, we have that
\begin{equation}\label{est:epsilon}
u(x)\leq C_{\alpha,\beta}\left(\frac{r_{k+1}^{\alpha-\frac{n-2}{2}}}{|x|^\alpha}+\frac{r_{k}^{\beta-\frac{n-2}{2}}}{|x|^\beta}\right)\hbox{ for all }x\in B_{R_0 r_k}(0)\setminus B_{R_0^{-1}r_{k+1}}(0).
\end{equation}

\smallskip\noindent{\it Proof of the claim:} We define the elliptic operator $L\varphi:=-\Delta\varphi-\frac{u^{\crits-2}}{|x|^s}\varphi+\mu u^{q-1}\varphi$. Clearly $Lu=0$ on $\ballp$. We fix $\alpha\in(0,n-2)$. Using the Harnack inequality of Lemma \ref{lem:harnack}, we get that there exists $C_0>0$ such that
\neweq{eq:L:alpha}
 \begin{aligned}
L(|x|^{-\alpha})
& \geq |x|^{-\alpha-2} \left[\alpha(n-2-\alpha)-\left(|x|^{\frac{n-2}{2}}u(x)\right)^{\crits-2}\right]\\
&\geq  |x|^{-\alpha-2} \left[\alpha(n-2-\alpha)-C_0 w^{\crits-2}(|x|)\right].
\end{aligned} \endeq

\noindent It follows from \eqref{lim:wk:w} and the definition of $w_k$ that there exists $\rho_k\in (r_{k+1}, \tau_{k+1})$ and $\sigma_k\in (\tau_{k+1}, r_k)$ such that for $k\geq k_0$ large enough, 
\neweq{def:sigma:k}
\left\{ \begin{aligned}
& w^{\crits-2}(\rho_k)<\frac{\alpha(n-2-\alpha)}{C_0}\hbox{ and }\lim_{k\to +\infty}\frac{\rho_k}{r_{k+1}}=C_1>0\\
&w^{\crits-2}(\sigma_k)< \frac{\alpha(n-2-\alpha)}{C_0}\hbox{ and }\lim_{k\to +\infty}\frac{\sigma_k}{r_{k}}=C_2>0.
\end{aligned} \right. \endeq
In what follows, we let $k\geq k_0$. In particular, there exists $C>0$ such that 
$$ 
 u(x) \leq C \sigma_k^{-\frac{n-2}{2}} \  \text{for all } x\in\partial B_{\sigma_k}(0)\ \text{and } 
u(x)\leq C \rho_k^{-\frac{n-2}{2}} \text{for all } x\in\partial B_{\rho_k}(0).
$$
We fix $\beta\in (0,n-2)$. 
Up to taking $w(\rho_k)$ and $w(\sigma_k)$ smaller, we can assume that the inequalities in \eqref{def:sigma:k} also hold with $\beta$ instead of $\alpha$. 
Hence, \eqref{mono:w} yields that $$w^{\crits-2}<\min\left\{\frac{\alpha(n-2-\alpha)}{C_0}, \frac{\beta(n-2-\beta)}{C_0}\right\} \quad \text{on }  [\rho_k,\sigma_k].$$ 
Thus, \eqref{eq:L:alpha} gives that
$L(|x|^{-\alpha})>0$ and $L(|x|^{-\beta})>0$ for all $x\in B_{\sigma_k}(0)\setminus B_{\rho_k}(0)$.
Therefore, by setting $H(x):=C \rho_k^{\alpha-\frac{n-2}{2}}|x|^{-\alpha}+C \sigma_k^{\beta-\frac{n-2}{2}}|x|^{-\beta}$, we have that 
$$ \left\{ \begin{aligned}
& LH>0=Lu && \text{in } B_{\sigma_k}(0)\setminus B_{\rho_k}(0),&\\
& H\geq u &&\text{on } \partial(B_{\sigma_k}(0)\setminus B_{\rho_k}(0)).&
\end{aligned} \right.
$$ Using the comparison principle of Beresticky--Nirenberg--Varadhan \cite{bnv}, we find that 
\begin{equation}\label{ineq:u:H}
u(x)\leq H(x)\hbox{ for all }x\in B_{\sigma_k}(0)\setminus B_{\rho_k}(0).
\end{equation}
Up to taking $C$ larger, it follows from \eqref{def:sigma:k} and \eqref{hyp:control:2} that this inequality also holds on $B_{R_0 r_k}(0)\setminus B_{R_0^{-1}r_{k+1}}(0)$ for $k$ large. Clearly this also holds for any $k$. This proves \eqref{est:epsilon} and ends Step 1. \hfill$\Box$

\medskip\noindent{\bf Step 2:} We now prove \eqref{lim:u:W:1}. The proof is divided into three cases.

\medskip\noindent{\bf Case 2.1:} We assume that, up to a subsequence, $r_k=O(|x_k|)$ as $k\to +\infty$.

\smallskip\noindent {\it Proof of \eqref{lim:u:W:1} in Case 2.1.} Passing to a subsequence, we have $x_k=r_k\theta_k$ where $\lim_{k\to +\infty}\theta_k=\theta_\infty\neq 0$. Therefore, it follows from Proposition \ref{prop:construct:radii} that
\begin{equation*}
r_k^{\frac{n-2}{2}}u(x_k)=r_k^{\frac{n-2}{2}}u(r_k \theta_k)\to c_{n,s}\left(\frac{1}{1+|\theta_\infty|^{2-s}}\right)^{\frac{n-2}{2-s}}\hbox{ as }k\to +\infty.
\end{equation*} 
On the other hand, 
\begin{equation*}
r_k^{\frac{n-2}{2}}W_k(x_k)=O\left(\frac{r_{k+1}}{r_k}\right)^{\frac{n-2}{2}}+c_{n,s}\left(\frac{1}{1+|\theta_k|^{2-s}}\right)^{\frac{n-2}{2-s}}\ \ \text{as } k\to +\infty.
\end{equation*}
Hence, it follows from these two equalities and \eqref{lim:rk} that $u(x_k)=(1+o(1))\,W_k(x_k)$ as $k\to +\infty$. 
This proves \eqref{lim:u:W:1} in Case 2.1.\hfill$\Box$

\medskip\noindent{\bf Case 2.2:} Assume that, up to a subsequence, $x_k=O(r_{k+1})$ as $k\to \infty$. 

\smallskip\noindent {\it Proof of \eqref{lim:u:W:1} in Case 2.2.} One can proceed exactly as in Case 2.1. We omit the details.\qed

\medskip\noindent{\bf Case 2.3:} We assume that, up to a subsequence, $r_{k+1}=o(|x_k|)$ and $x_k=o(r_k)$ as $k\to +\infty$. 
Note that with this choice of $x_k$, we have that
\begin{equation}\label{lim:W:k}
W_k(x_k)=c_{n,s}(1+o(1))\left(r_{k+1}^{\frac{n-2}{2}}|x_k|^{2-n}+r_k^{-\frac{n-2}{2}}\right)\hbox{ as }k\to +\infty.
\end{equation}
We split the proof of \eqref{lim:u:W:1} in five steps.

\medskip\noindent{\bf Step 2.3.1:} We let $G$ be the Green's function of $-\Delta$ on $B_{1/2}(0)$ with Dirichlet boundary condition. We claim that
\begin{equation}\label{id:green}
u(x)=\int_{B_{1/2}(0)}G(x,y) f_{\mu,q}(y,u(y))\, dy-\int_{\partial B_{1/2}(0)}\partial_\nu G(x,y)u(y)\, d\sigma(y)
\end{equation}
for all $x\in B_{1/2}(0)\setminus \{0\}$. Here, $f_{\mu,q}$ was defined in \eqref{F-mu}. In particular, the right-hand side of this equation makes sense.

\medskip\noindent{\it Proof of the claim:} We fix $x\in B_{1/2}(0)\setminus \{0\}$, and we let $\delta>0$ be such that $\delta<|x|/2<1/4$. Green's Formula yields
\begin{eqnarray*}
u(x)&=&\int_{B_{1/2}(0)\setminus \overline{B_\delta(0)}}G(x,y)(-\Delta)u(y)\, dy\\
&&+\int_{\partial \left(B_{1/2}(0)\setminus \overline{B_\delta(0)}\right)}(-\partial_\nu G(x,y) u(y)+G(x,y)\partial_\nu u (y))\, d\sigma(y).
\end{eqnarray*}
Standard properties of the Green's function (see e.g. Robert \cite{r1}) yield the existence of $C>0$ such that 
\begin{equation}\label{ineq:G}
G(x,y)\leq C|x-y|^{2-n}\hbox{ and }|\nabla_y G(x,y)|\leq C|x-y|^{1-n}
\end{equation}
for all $x,y\in B_{1/2}(0)$, $x\neq y$. Using the pointwise control of Lemma \ref{lem:prelim:2} and \eqref{ineq:G}, we can pass to the limit as $\delta\to 0$ and get \eqref{id:green}. This proves the claim and ends Step 2.3.1.\hfill$\Box$

\medskip\noindent Since $|x_k|\to 0 $ and $W_k(x_k)\to +\infty$ as $k\to +\infty$, it follows that 
\neweq{plan} \frac{\int_{\partial B_{1/2}(0)}\partial_\nu G(x_k,y)\,u(y)\, d\sigma(y)}{W_k(x_k)}\to 0\ \ \text{as } k\to +\infty.
\endeq
In view of \eqref{plan} and \eqref{id:green}, to end the proof of \eqref{lim:u:W:1}, it remains to show that 
\neweq{cop}
\frac{\int_{B_{1/2}(0)}G(x_k,y)f_{\mu,q}(y,u(y))\, dy}{W_k(x_k)}\to 1\ \ \text{as } k\to +\infty.
\endeq
To this end, we notice that 
\neweq{sum} 
\int_{B_{1/2}(0)}G(x_k,y)f_{\mu,q}(y,u(y))\, dy=\sum_{j=1}^5 A_{j,k,R}(x_k),
\endeq
where for $j=1,2,3,4,5$, we define $A_{j,k,R}(x_k)$ as follows
\neweq{def:a}   
A_{j,k,R}(x_k):=\int_{D_{j,k,R}} G(x_k,y)f_{\mu,q}(y,u(y))\, dy.
\endeq
The domain of integration $D_{j,k,R}$ is given by 
$$ \left\{ \begin{aligned}
& D_{1,k,R}:=B_{1/2}(0)\setminus B_{Rr_k}(0), &&  D_{2,k,R}:=B_{R^{-1}r_{k+1}}(0),&\\
& D_{3,k,R}:=B_{R^{-1}r_k}(0)\setminus
B_{Rr_{k+1}}(0),&&  D_{4,k,R}:=B_{Rr_{k+1}}(0)\setminus B_{R^{-1}r_{k+1}}(0),&\\
&  D_{5,k,R}:=B_{Rr_k}(0)\setminus B_{R^{-1}r_k}(0).
\end{aligned} \right. $$
Shortly below, we shall prove the following claims:
\neweq{est}
\left\{\begin{aligned}
& \lim_{R\to +\infty} \lim_{k\to +\infty} \frac{A_{j,k,R}(x_k)}{W_k(x_k)}=0\quad \text{for } j=1,2,3;\\
& \lim_{R\to +\infty}  \lim_{k\to +\infty} \frac{A_{4,k,R}(x_k)}{r_{k+1}^{\frac{n-2}{2}} |x_k|^{2-n}}=
  \lim_{R\to +\infty}  \lim_{k\to +\infty} \frac{A_{5,k,R}(x_k)}{r_{k}^{-\frac{n-2}{2}}} =c_{n,s}. 
\end{aligned}\right.
\endeq
Then, the proof of \eqref{cop} follows from \eqref{lim:W:k}, \eqref{sum} and \eqref{est}. 

\medskip\noindent{\bf Step 2.3.2:} We claim that
 \begin{equation}\label{claim:1}
 \lim_{R\to +\infty} \lim_{k\to +\infty} \frac{A_{j,k,R}(x_k)}{W_k(x_k)}=0\quad \text{for } j=1,2.
\end{equation}

\smallskip\noindent{\it Proof of the claim: } 
We fix $R>0$. Recall that $Rr_k\leq |y|<1/2$ for every $y\in D_{1,k,R}$, whereas 
$|y|<R^{-1}r_{k+1}$ for any 
$y\in D_{2,k,R}$.  
Since $x_k=o(r_k)$ and $r_{k+1}=o(|x_k|)$ as $k\to +\infty$, using the pointwise bound in \eqref{ineq:G}, we find that 
\neweq{gk} G(x_k,y)\leq \left\{ \begin{aligned} 
&C |y|^{2-n} &&\text{ for all } y\in D_{1,k,R}&\\
& C |x_k|^{2-n} && \text{ for all } y\in D_{2,k,R}
\end{aligned} \right.\endeq
for $k$ large enough. Since $q\leq \crit-1$, \eqref{hyp:control:2} yields $|f_{\mu,q}(y,u(y))| \leq C |y|^{-\frac{n+2}{2}}$ for all $y\in B_{1/2}(0)\setminus \{0\}$. This inequality, \eqref{ineq:G} and the definition of $A_{j,k,R}$ in \eqref{def:a} yield
\neweq{duo}
\left\{ \begin{aligned}
& |A_{1,k,R}|
 \leq C \int_{B_{1/2}(0)\setminus B_{R r_k}(0)}|y|^{1-\frac{3n}{2}}\, dy\leq C (R r_k)^{-\frac{n-2}{2}}\\
 & |A_{2,k,R}| \leq C \int_{B_{R^{-1} r_{k+1}}(0)}|x_k|^{2-n}|y|^{-\frac{n+2}{2}}\, dy\leq C |x_k|^{2-n}(R^{-1} r_{k+1})^{\frac{n-2}{2}}
 \end{aligned} \right.
\endeq
for $k$ large. 
Using \eqref{duo} and \eqref{lim:W:k}, we arrive at \eqref{claim:1}. This ends Step 2.3.2.\hfill$\Box$

\medskip\noindent{\bf Step 2.3.3:} We claim that
\begin{equation}\label{claim:4}
\lim_{R\to +\infty}\lim_{k\to +\infty}\frac{A_{4,k,R}(x_k)}{r_{k+1}^{\frac{n-2}{2}}|x_k|^{2-n}}=c_{n,s}.
\end{equation}

\smallskip\noindent{\it Proof of the claim:} We denote $u_{k+1}(z):=r_{k+1}^{\frac{n-2}{2}}u(r_{k+1} z)$ and define $ \mathcal I_{R}$ as follows
\neweq{ir} \mathcal I_{R}:=  \int_{B_{R }(0)\setminus B_{R^{-1} }(0)}\frac{U_1^{\crits-1}(z)}{|z|^s}\, dz ,
\endeq
where $U_1$ is given by \eqref{def:U:lambda}. By Proposition \ref{prop:construct:radii}, we have
$u_{k+1}\to U_1$ in $C^2_{\rm loc} (\rn\setminus\{0\})$. Since $U_1$ satisfies 
$-\Delta U_1(z)=\frac{U_1^{\crits-1}(z)}{|z|^s}$ in $\rn\setminus\{0\}$, we obtain that
 \neweq{est:3} 
\begin{aligned} 
 \mathcal I_{R} &=-\int_{\partial B_{R }(0)}\partial_\nu U_1\, d\sigma+\int_{\partial B_{R^{-1} }(0)}\partial_\nu U_1\, d\sigma\\
&=(n-2) \,\omega_{n-1}\, c_{n,s} (1+R^{s-2})^{\frac{s-n}{2-s}} \left(1-R^{s-n}\right).
\end{aligned}\endeq
Using the change of variable $y=r_{k+1}z$, we find that $\frac{A_{4,k,R}(x_k)}{r_{k+1}^{\frac{n-2}{2}}|x_k|^{2-n}}$ is equal to 
$$
\int_{B_{R }(0)\setminus B_{R^{-1} }(0)}\frac{G(x_k,r_{k+1} z)}{|x_k|^{2-n}}\left(\frac{u_{k+1}^{\crits-1}(z)}{|z|^s}-\mu r_{k+1}^{\frac{n-2}{2}(\crit-1-q)}u_{k+1}^q(z)\right)\, dz. 
$$
It is standard (see Robert \cite{r1}) that
\begin{equation}\label{asymp:G}
\lim_{x,y\to 0}|x-y|^{n-2}G(x,y)=\frac{1}{(n-2)\omega_{n-1}},
\end{equation}
Since $r_{k+1}=o(|x_k|)$ as $k\to +\infty$, using \eqref{asymp:G} and $\mathcal I_R$ in \eqref{ir}, we get that
\neweq{est:2} 
\lim_{k\to \infty} \frac{A_{4,k,R}(x_k)}{r_{k+1}^{\frac{n-2}{2}}|x_k|^{2-n}}
=\frac{\mathcal I_R}{(n-2)\,\omega_{n-1}}.
\endeq
From  \eqref{est:3} and \eqref{est:2}, we conclude \eqref{claim:4}. 
This completes Step 2.3.3.\hfill$\Box$

\medskip\noindent{\bf Step 2.3.4:} We claim that
\begin{equation}\label{claim:5}
\lim_{R\to +\infty}\lim_{k\to +\infty}\frac{A_{5,k,R}(x_k)}
{r_{k}^{-\frac{n-2}{2}}}=c_{n,s}. 
\end{equation}

\noindent{\it Proof of the claim:} 
Since $x_k=o(r_k)$ as $k\to +\infty$, using \eqref{asymp:G}, we find that  
$$ \lim_{k\to +\infty} r_k^{n-2}G(x_k,r_{k} z)= \Gamma_n(z):=((n-2)\omega_{n-1})^{-1}|z|^{2-n}
$$ 
uniformly with respect to $z\in B_{R }(0)\setminus B_{R^{-1} }(0)$. Denoting $u_k(z):=r_k^{\frac{n-2}{2}}u(r_k z)$, then by 
Proposition \ref{prop:construct:radii},  
$u_{k}\to U_1$ in $C^2_{\rm loc} (\rn\setminus\{0\})$ as $k\to \infty$. 
By the change of variable $y=r_{k}z$, we find that $r_{k}^{\frac{n-2}{2}} A_{5,k,R}(x_k)$ equals
$$\int_{B_{R }(0)\setminus B_{R^{-1} }(0)}r_{k}^{n-2} G(x_k,r_{k} z)\left(\frac{u_k^{\crits-1}(z)}{|z|^s}-\mu r_k^{\frac{n-2}{2}(\crit-1-q)}u_k^q(z)\right)\, dz.
$$
Hence, letting $k\to \infty$, we get that
\neweq{eq:23}
\lim_{k\to +\infty}\frac{A_{5,k,R}(x_k)}
{r_{k}^{-\frac{n-2}{2}}}
=\int_{B_{R }(0)\setminus B_{R^{-1} }(0)}\Gamma_n(z)\,\frac{U_1^{\crits-1}(z)}{|z|^s}\, dz:=\mathcal J_R.
\endeq
Using Green's representation formula, equation \eqref{eq:lim:2} satisfies by $U_1$ and the explicit expression of $U_1$, we see that as $R\to +\infty$
\neweq{eq:24}
\mathcal J_R \to\int_{\rn}\Gamma_n(z)\,\frac{U_1^{\crits-1}(z)}{|z|^s}\, dz
 =-\int_{\rn}\Gamma_n(z) \,\Delta U_1(z)\, dz
=U_1(0)=c_{n,s}.
\endeq
Note that this computation makes sense due to the growth of $U_1$. From \eqref{eq:23} and \eqref{eq:24}, we obtain \eqref{claim:5}. This proves the claim and ends Step 2.3.4.\hfill$\Box$

\medskip\noindent{\bf Step 2.3.5:} We claim that
\begin{equation}\label{claim:3}
\lim_{R\to +\infty}\lim_{k\to +\infty}\frac{A_{3,k,R}(x_k)}{W_k(x_k)}=0.
\end{equation}
\medskip\noindent{\it Proof of the claim:} 
We first show that 
\neweq{ch} 
\lim_{|x|\to 0} |x|^s [u(x)]^{q-(\crits-1)}=0. 
\endeq  
When $\crits-1< q<\crit -1$, then \eqref{ch} follows from \eqref{hyp:control:2}. If $q=\crits-1$, then \eqref{ch} is clear. 
If $1< q<\crits-1$, then 
for every $m>0$
\neweq{mbb} \Delta (|x|^m)\geq |x|^{mq}\quad \text{for all } 0<|x|< \left[m(m+n-2)\right]^{\frac{1}{(q-1)m+2}}. \endeq
From \eqref{hyp:inf:sup:2}, we have $\limsup_{x\to 0} u(x)=+\infty$. The spherical Harnack inequality \eqref{harnack} gives 
a sequence of positive numbers $\{\xi_k\}_{k\geq 1}$ decreasing to $0$ such that 
$$ u(x)\geq 1 \quad \text{for all } |x|=\xi_k\ \ \text{and every } k\geq 1.$$ 
Without loss of generality, we assume that $\xi_1<1/2$. 
Let $k_1>1$ be large such that $\xi_{k_1}< \left[m(m+n-2)\right]^{\frac{1}{(q-1)m+2}}$. Since $\Delta u\leq u^q$ in $B_1(0)\setminus\{0\}$, using 
\eqref{mbb} and the comparison principle (see for instance Lemma 2.1 in C\^{\i}rstea-R\u adulescu \cite{cr}), we find that  
$u(x)\geq |x|^m$ for all $\xi_k\leq |x|\leq \xi_{k_1}$ and $k>k_1$. Letting $k\to \infty$ and choosing $ 0<m<s/(\crits-1-q)$, we conclude 
\eqref{ch}.  

\noindent Hence, there exists a constant $C_0> 0$ such that
\begin{equation}\label{bnd:u:q}
u^q(x)\leq C_0 \frac{u^{\crits-1}(x)}{|x|^s}\hbox{ for all }x\in B_{1/2}(0)\setminus\{0\}.
\end{equation}
Since $s\in (0,2)$, we can choose $\alpha$ and $\beta$ in the interval $(0,n-2)$ such that
\begin{equation}\label{choice:a:b}
\beta \gamma+s <2\  \text{and} \ n<\alpha \gamma+s,\quad \text{where } \gamma=\crits-1.
\end{equation}
We fix $R_0>0$. We let $R>R_0^{-1}$. Since $D_{3,k,R}:=B_{R^{-1} r_k}(0)\setminus B_{R r_{k+1}}(0)$, we have $D_{3,k,R}\subset B_{R_0r_k}(0)\setminus B_{R_0^{-1} r_{k+1}}(0)$. 
Using \eqref{bnd:u:q} and the definition of $A_{3,k,R}(x_k)$ in \eqref{def:a}, we find that
$$ |A_{3,k,R}(x_k) | \leq C' \int_{D_{3,k,R}} G(x_k,y) \frac{u^{\crits-1}(y)}{|y|^s} \,dy 
$$ for some constant $C'>0$. 
We define $M_{k,R}(x_k)$ and $N_{k,R}(x_k)$ as follows
\neweq{mnm}
\left\{ \begin{aligned}
& M_{k,R}(x_k):= r_{k+1}^{\left(\alpha-\frac{n-2}{2}\right)\gamma} \,  \int_{D_{3,k, R}} \frac{|x_k-y|^{2-n}}{|y|^{\alpha\gamma+s}}\, dy,\\
&  N_{k,R}(x_k):= r_{k}^{\left(\beta-\frac{n-2}{2}\right)\gamma} \,
 \int_{D_{3,k, R}} \frac{|x_k-y|^{2-n}}{|y|^{\beta\gamma+s}}\, dy.
\end{aligned} \right.
\endeq

\smallskip\noindent  By \eqref{ineq:G} and \eqref{est:epsilon}, there exists $C>0$ (independent of $R>R_0^{-1}$) such that 
\neweq{est:1}
 |A_{3,k,R}(x_k) | \leq C(M_{k,R}(x_k)+N_{k,R}(x_k))\quad \text{for all } k\geq 1. 
 \endeq

\smallskip\noindent We claim that there exist positive constants $C$, $\tau$ and $\tau'$ such that as $k\to \infty$
\neweq{claim:int:1}
\left\{ \begin{aligned}
&  M_{k,R}(x_k)
\leq Cr_{k+1}^{\frac{n-2}{2}}|x_k|^{2-n} \left(R^{-\tau}+o(1)\right),\\
& N_{k,R}(x_k)\leq C R^{-\tau'} r_k^{-\frac{n-2}{2}} \left(1+o(1)\right) .
\end{aligned} \right. 
\endeq
We prove the claim. In what follows, we take $k\geq k_0$ and denote
$$ \left\{ \begin{aligned}
& T_{1,k,R}(x_k):= \int_{D_{3,k,R}\cap\{|x_k-y|\geq |x_k|/2\}}\frac{|x_k-y|^{2-n}}{|y|^{\alpha \gamma+s}}\, dy,\\
& T_{2,k,R}(x_k):= \int_{D_{3,k,R}\cap\{|x_k-y|< |x_k|/2\}}\frac{|x_k-y|^{2-n}}{|y|^{\alpha \gamma+s}}\, dy.
\end{aligned} \right.
$$
Let $\tau:=\alpha \gamma+s-n$. Using that $n<\alpha \gamma+s$, we have $\tau>0$ and 
\neweq{t1}  T_{1,k,R}(x_k) \leq \left(\frac{|x_k|}{2}\right)^{2-n}\int_{B_1(0)\setminus B_{R r_{k+1}}(0)}\frac{dy}{|y|^{\alpha \gamma+s}}\leq C|x_k|^{2-n}\left(R r_{k+1}\right)^{-\tau}
\endeq
for some constant $C>0$. On the other hand, we find that 
\neweq{t2} T_{2,k,R}(x_k)\leq C|x_k|^{-(\alpha \gamma+s)} \int_{\{|x_k-y|< |x_k|/2\}}|x_k-y|^{2-n}\, dy\leq C|x_k|^{2-(\alpha \gamma+s)}. 
\endeq
Using \eqref{mnm}, \eqref{t1} and \eqref{t2}, we conclude the first inequality in \eqref{claim:int:1} since
$$
\begin{aligned}
M_{k,R}(x_k)
&\leq  r_{k+1}^{\left(\alpha-\frac{n-2}{2}\right)\gamma}\left(T_{1,k,R}(x_k)+ T_{2,k,R}(x_k)\right)\\
&\leq C r_{k+1}^{\frac{n-2}{2}}\,|x_k|^{2-n} \left[ R^{-\tau} + \left(\frac{r_{k+1}}{|x_k|}\right)^{\tau}
\right].
\end{aligned}
$$

\noindent For the second estimate in \eqref{claim:int:1}, we denote $\tau':=2-\beta\gamma-s$. From the choice of $\beta$ in  \eqref{choice:a:b}, we have $\tau'>0$. 
With the change of variable $y=R^{-1}r_k z$, we find that  
\neweq{nkr}
N_{k,R}(x_k) \leq r_{k}^{2-s-\frac{n-2}{2}\gamma}R^{-\tau'}\int_{B_1(0)}\left|R\frac{x_k}{r_k}-z\right|^{2-n}|z|^{\tau'-2}\, dz.
\endeq
The integral in the right-hand side of \eqref{nkr} converges as $k\to +\infty$ since $\tau'>0$ and $x_k=o(r_k)$ as $k\to +\infty$ 
From $\gamma=\crits-1$, we have $2-s-\frac{n-2}{2}\gamma=\frac{2-n}{2}$. Thus \eqref{nkr} shows  
the second inequality in \eqref{claim:int:1}. This proves the claim of \eqref{claim:int:1}.

\vspace{0.2cm}\noindent Using \eqref{claim:int:1} into \eqref{est:1}, jointly with \eqref{lim:W:k}, we get \eqref{claim:3}. This ends Step 2.3.5.\hfill$\Box$

\smallskip\noindent {\it Proof of \eqref{lim:u:W:1} in Case 2.3.} This is a consequence of Steps 2.3.1 to 2.3.5 above. This ends the proof of Proposition  \ref{prop:sharp:lim}.\qed

\medskip\noindent As a consequence of Proposition \ref{prop:sharp:lim}, we get the following:

\begin{prop}\label{prop:sharp:lim:opt} Let $q<\crit-1$ and  $u\in C^\infty(\ballp)$ be a positive solution to \eqref{eq:60} such that \eqref{hyp:control:2} and \eqref{hyp:inf:sup:2} hold. Then we have that
\begin{equation}\label{sharp:asymp:opt}
u(x)=(1+o(1))\sum_{k=0}^\infty U_{r_k}(x)\quad \text{as } x\to 0.
\end{equation}
In particular, $u$ develops a  singularity of (MB) type.
\end{prop}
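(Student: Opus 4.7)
The plan is to bootstrap the two-bump local expansion of Proposition~\ref{prop:sharp:lim} to the full multi-bump series by verifying that, on each annular shell where the local formula holds, every bump other than the two ``nearest'' ones is negligible. Fix $R>1$, and set $A_{k,R}:=B_{Rr_k}(0)\setminus B_{R^{-1}r_{k+1}}(0)$. For $x$ with $0<|x|<r_0$, let $k(x)$ be the unique index satisfying $r_{k(x)+1}\le|x|<r_{k(x)}$; then $x\in A_{k(x),R}$, and $k(x)\to\infty$ as $x\to 0$. Proposition~\ref{prop:sharp:lim} yields
\[ u(x)=(1+\eps_k(x))\bigl(U_{r_k}(x)+U_{r_{k+1}}(x)\bigr)\quad\text{on }A_{k,R},\]
with $\sup_{A_{k,R}}|\eps_k|\to 0$ as $k\to\infty$.

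The crux of the proof is the tail estimate
\[ \sum_{j\notin\{k,k+1\}} U_{r_j}(x)=o(1)\bigl(U_{r_k}(x)+U_{r_{k+1}}(x)\bigr)\qquad\text{uniformly on }A_{k,R}. \]
Split the sum into $j<k$ and $j>k+1$. For $j<k$ and $x\in A_{k,R}$ we have $|x|\le Rr_k$, and iterating $r_\ell=o(r_{\ell-1})$ from Proposition~\ref{prop:construct:radii} gives $|x|\ll r_j$ for $k$ large. The explicit formula \eqref{U1-def} yields the universal bound $U_{r_j}(x)\le c_{n,s}\,r_j^{-(n-2)/2}$. Writing $b_j:=r_j^{-(n-2)/2}$, the ratios $b_{j-1}/b_j=(r_j/r_{j-1})^{(n-2)/2}\to 0$ furnish the geometric-sum estimate $\sum_{j<k}b_j\le 2\,b_{k-1}+O(1)=o(b_k)$. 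Since the explicit form of $U_{r_k}$ gives $U_{r_k}(x)\ge c(R,n,s)\,r_k^{-(n-2)/2}$ on $A_{k,R}$, we conclude $\sum_{j<k}U_{r_j}(x)=o(U_{r_k}(x))$ uniformly. The symmetric argument handles $j>k+1$: then $r_j\ll R^{-1}r_{k+1}\le|x|$, and from \eqref{U1-def} we have the universal bound $U_{r_j}(x)\le c_{n,s}\,r_j^{(n-2)/2}|x|^{2-n}$; the analogous geometric sum in $a_j:=r_j^{(n-2)/2}$ yields $\sum_{j>k+1}U_{r_j}(x)=o(r_{k+1}^{(n-2)/2}|x|^{2-n})=o(U_{r_{k+1}}(x))$ on $A_{k,R}$.

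Combining the local expansion with the tail estimate, for $x\in A_{k,R}$,
\[ u(x)=(1+\eps_k(x))\bigl(U_{r_k}(x)+U_{r_{k+1}}(x)\bigr)=(1+o(1))\sum_{j=0}^\infty U_{r_j}(x); \]
since $k(x)\to\infty$ as $x\to 0$, this yields \eqref{sharp:asymp:opt}. The (MB) profile then follows immediately from the definition. The only analytical step requiring care is the uniform geometric control of the tails, which reduces cleanly to $r_{k+1}/r_k\to 0$ already established in Proposition~\ref{prop:construct:radii}; no new input beyond Propositions~\ref{prop:sharp:lim} and~\ref{prop:construct:radii} is required.
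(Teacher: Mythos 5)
Your proof is correct and follows essentially the same route as the paper: both reduce to showing the tail $\sum_{j\notin\{k,k+1\}}U_{r_j}$ is uniformly $o(U_{r_k}+U_{r_{k+1}})$ on the shell where the two-bump expansion of Proposition~\ref{prop:sharp:lim} applies, using the explicit decay of $U_\lambda$ and the super-geometric property $r_{k+1}=o(r_k)$. The only cosmetic difference is that the paper packages the tail sums via telescoping identities, whereas you argue directly via eventually-geometric bounds on $r_j^{\pm(n-2)/2}$; the two are equivalent.
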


\begin{proof}[Proof of Proposition \ref{prop:sharp:lim:opt}] We start with a preliminary remark. Since $r_{k+1}=o(r_k)$ as $k\to +\infty$, we have $r_k-r_{k+1}=(1+o(1))r_k>0$ as $k\to +\infty$. Therefore, since $r_k\to 0$ as $k\to +\infty$, we find that
\begin{equation}\label{asymp:r:1}
r_{l+2}^{\frac{n-2}{2}}=\sum_{k=l+2}^\infty \left(r_k^{\frac{n-2}{2}}-r_{k+1}^{\frac{n-2}{2}}\right)=(1+o(1))\sum_{k=l+2}^\infty r_k^{\frac{n-2}{2}}
\end{equation}
as $l\to +\infty$. Similarly, we obtain that
\begin{equation}\label{asymp:r:2}
\left(1+o(1)\right)r_{l-1}^{-\frac{n-2}{2}}=\sum_{k=1}^{l-1} \left(r_{k}^{-\frac{n-2}{2}}-r_{k-1}^{-\frac{n-2}{2}}\right)=(1+o(1))\sum_{k=1}^{l-1} r_{k}^{-\frac{n-2}{2}}
\end{equation}
as $l\to +\infty$. Let $x\in \ballp$ be such that $|x|<r_{0}$. Let $l\in\nn$ be such that
\neweq{inx} r_{l+1}\leq |x|< r_{l}.\endeq
Using \eqref{inx} and the definition of $U_\lambda$ in \eqref{dicho:ui}, we see that 
\neweq{alb} U_{l+1}(x)\geq 2^{\frac{n-2}{s-2}}  |x|^{2-n}r_{l+1}^{\frac{n-2}{2}} \ \text{ and }\ 
U_l(x)\geq 2^{\frac{n-2}{s-2}}  r_{l}^{-\frac{n-2}{2}} .
\endeq
Using \eqref{asymp:r:1} and \eqref{asymp:r:2}, we get that 
\neweq{ali} 
 \left\{ \begin{aligned}
& \sum_{k=l+2}^\infty U_{r_k}(x)
\leq c_{n,s} |x|^{2-n}\sum_{k=l+2}^\infty r_k^{\frac{n-2}{2}}\leq 2 c_{n,s}\,|x|^{2-n}r_{l+2}^{\frac{n-2}{2}},\\
& 
\sum_{k=0}^{l-1} U_{r_k}(x)
\leq c_{n,s} \sum_{k=0}^{l-1}  r_k^{-\frac{n-2}{2}}\leq 2 c_{n,s} r_{l-1}^{-\frac{n-2}{2}}
\end{aligned} \right. 
\endeq
for $l$ large enough. From \eqref{alb} and \eqref{ali}, we find $C>0$ such that 
$$
\sum_{k=0}^{l-1} U_{r_k}(x)+  \sum_{k=l+2}^\infty U_{r_k}(x)
\leq C \left(\frac{r_{l+2}}{r_{l+1}}\right)^{\frac{n-2}{2}} U_{l+1}(x)+ C  \left(\frac{r_{l}}{r_{l-1}}\right)^{\frac{n-2}{2}} U_l(x)
$$
for $l$ large enough. Since for $|x|$ small enough, $l$ is large, we obtain that
\neweq{ye} \sum_{k=0}^{l-1} U_{r_k}(x)+  \sum_{k=l+2}^\infty U_{r_k}(x)\leq \eps_l \left(U_{l+1}(x)+ U_{l}(x)\right),
\endeq
where
$$\eps_l:=C \left(\left(\frac{r_{l+2}}{r_{l+1}}\right)^{\frac{n-2}{2}}+ \left(\frac{r_{l}}{r_{l-1}}\right)^{\frac{n-2}{2}} \right)\to 0\ \text{as } l\to +\infty.$$
By Proposition~\ref{prop:sharp:lim:opt}, we have
\neweq{by} u(x)=(1+\varepsilon_l(x)) \left(U_{r_{l+1}}(x)+U_{r_l}(x)\right),\endeq
where $\lim_{l\to +\infty} \varepsilon_l(x)=0$ uniformly with respect to $x$ in $B_{r_l}(0)\setminus B_{r_{l+1}}(0)$. 
From \eqref{ye} and  \eqref{by}, we conclude \eqref{sharp:asymp:opt} and therefore Proposition \ref{prop:sharp:lim:opt}.\end{proof}

\section{Estimate for the radii $(r_k)$}\label{sec:radii}
The objective of this section is to prove the following asymptotics.

\begin{prop}\label{prop:est:rk} Let $u\in C^\infty(\ballp)$ be a positive solution to 
\begin{equation}\label{eq:70}
-\Delta u= \frac{u^{\crits-1}}{|x|^s}-\mu u^q \hbox{ in }\ballp.
\end{equation}
We assume that $q<\crit-1$ and 
\begin{equation}\label{inf-sup}
\liminf_{x\to 0}|x|^{\frac{n-2}{2}}u(x)=0\quad \text{and}\quad \limsup_{x\to 0}|x|^{\frac{n-2}{2}}u(x)\in (0,\infty).
\end{equation}
Then
\neweq{q-ineq} q>\crit-2=\frac{4}{n-2}.\endeq
We let $(r_k)_k$ the points of local maxima and $(\tau_k)_k$ the points of local minima of $w$ defined in \eqref{def:w} and Proposition \ref{prop:construct:radii}. Then
as $k\to +\infty$, we have
\begin{equation}\label{est:r}
\tau_{k+1}=\left(1+o(1)\right)\sqrt{r_{k+1}r_k}\ \text{and}\ 
r_{k+1}=\left(K+o(1)\right)r_{k}^{\frac{1}{q-(\crit-2)}}, 
\end{equation}
where $K$ is a positive constant defined by 
\neweq{def-k} K=\left(\frac{(\crit-1-q)\mu }{(q+1)(n-2) c_{n,s}^2\omega_{n-1}}\int_{\rn} U_1^{q+1}\, dx\right)^{\frac{2}{(n-2)(q-(\crit-2))}}.\endeq
\end{prop}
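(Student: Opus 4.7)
The plan is to combine the asymptotic Pohozaev identity (Proposition \ref{prop:def:asymp}) with the sharp two-bump asymptotic of Proposition \ref{prop:sharp:lim}, evaluated at the critical radius $\tau_{k+1}$. Since $P^{(q)}(u)=0$ by Proposition \ref{prop:prelim:1}, letting the inner radius go to $0$ in \eqref{id:poho:invar} yields, for every small $r>0$,
\[ P_r^{(q)}(u)=\frac{(n-2)(\crit-1-q)\mu}{2(q+1)}\int_{B_r(0)}u^{q+1}\,dx. \]
The formula for $\tau_{k+1}$ and the recursion for $r_{k+1}$ both come from evaluating this identity at suitable radii; the inequality $q>\crit-2$ will emerge as a consistency condition at the end.

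First, I would establish $\tau_{k+1}=(1+o(1))\sqrt{r_kr_{k+1}}$ directly from Proposition \ref{prop:sharp:lim}. In the middle of the annulus between $r_{k+1}$ and $r_k$, $u$ is close to the radial sum $U_{r_k}+U_{r_{k+1}}$, so the explicit asymptotics of $U_\lambda$ (plateau $\sim c_{n,s}\lambda^{-(n-2)/2}$ for $|x|\ll\lambda$, tail $\sim c_{n,s}\lambda^{(n-2)/2}|x|^{2-n}$ for $|x|\gg\lambda$) give
\[ w(r)=r^{(n-2)/2}\bar u(r)\sim c_{n,s}\left[(r/r_k)^{(n-2)/2}+(r_{k+1}/r)^{(n-2)/2}\right], \]
whose unique minimum lies at $r=\sqrt{r_kr_{k+1}}$.

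Next, I would compute both sides of the Pohozaev identity at $r=\tau_{k+1}$. For the right-hand side, the multi-bump representation of Proposition \ref{prop:sharp:lim:opt}, together with $\tau_{k+1}/r_{k+1}=\sqrt{r_k/r_{k+1}}\to\infty$, shows that the bump at scale $r_{k+1}$ is entirely captured by $B_{\tau_{k+1}}(0)$, while the tails of the larger bumps and the smaller bumps contribute only lower-order terms (by an argument parallel to Step 2.3.5 of Section \ref{sec:blowup:2}):
\[ \int_{B_{\tau_{k+1}}(0)}u^{q+1}\,dx=(1+o(1))\,r_{k+1}^{(n-2)(\crit-1-q)/2}\int_{\rn}U_1^{q+1}\,dx. \]
For the left-hand side, the critical point condition $w'(\tau_{k+1})=0$ reads $\tau_{k+1}\bar u'(\tau_{k+1})+\frac{n-2}{2}\bar u(\tau_{k+1})=0$, meaning that the spherical average of $T(x,u)$ vanishes on $\partial B_{\tau_{k+1}}(0)$. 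Together with the approximate radial symmetry of $u$ on this sphere, this reduces the Pohozaev integral to
\[ P_{\tau_{k+1}}^{(q)}(u)=(1+o(1))\,\omega_{n-1}\left[\frac{(n-2)^2}{8}\tau_{k+1}^{n-2}v_\star^2-\frac{\tau_{k+1}^{n-s}}{\crits}v_\star^{\crits}+\frac{\mu\,\tau_{k+1}^n}{q+1}v_\star^{q+1}\right], \]
with $v_\star:=\bar u(\tau_{k+1})\sim 2c_{n,s}r_k^{-(n-2)/2}$ (both $U_{r_k}(\tau_{k+1})$ and $U_{r_{k+1}}(\tau_{k+1})$ evaluate to $c_{n,s}r_k^{-(n-2)/2}$ at $\tau_{k+1}=\sqrt{r_kr_{k+1}}$). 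The three terms scale respectively as $(r_{k+1}/r_k)^{(n-2)/2}$, $(r_{k+1}/r_k)^{(n-s)/2}$, and a power that is even smaller once $q>\crit-2$; since $0<s<2$ the first term dominates, giving
\[ P_{\tau_{k+1}}^{(q)}(u)=(1+o(1))\,\frac{(n-2)^2c_{n,s}^2\omega_{n-1}}{2}\left(\frac{r_{k+1}}{r_k}\right)^{(n-2)/2}. \]

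Equating the two sides and solving for $r_{k+1}$ yields $r_{k+1}^{q-(\crit-2)}=(\tilde K+o(1))\,r_k$, where $\tilde K=(\crit-1-q)\mu\int_{\rn}U_1^{q+1}/((n-2)(q+1)c_{n,s}^2\omega_{n-1})$; this reproduces \eqref{est:r} with $K=\tilde K^{2/((n-2)(q-(\crit-2)))}$ matching \eqref{def-k}. Since $r_{k+1}=o(r_k)$ by Proposition \ref{prop:construct:radii}, the exponent $1/(q-(\crit-2))$ must exceed $1$, forcing $0<q-(\crit-2)<1$ and in particular $q>\crit-2$. The main obstacle is making the reduction of the Pohozaev integral to its radial form rigorous: one must show that the non-radial part of $u$ on $\partial B_{\tau_{k+1}}(0)$ contributes only lower-order terms, which uses the $C^2$ closeness of $u$ to the explicit radial sum $U_{r_k}+U_{r_{k+1}}$ from Proposition \ref{prop:sharp:lim} combined with elliptic regularity, and careful bookkeeping on the transition region at scale $\tau_{k+1}$, where neither of the two natural rescalings centered at $r_k$ or $r_{k+1}$ is sharp.
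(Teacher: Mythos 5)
Your overall plan is correct and follows the same strategy as the paper: combine $P^{(q)}(u)=0$ with the Pohozaev identity \eqref{eq:P:lambda}, evaluate the boundary term and the volume term at (or near) the scale $\sqrt{r_kr_{k+1}}$, and read off the recursion. Your boundary computation at $\tau_{k+1}$, using the critical-point condition to kill the $T(x,u)\partial_\nu u$ term and the approximate radiality to reduce to the explicit $U_\lambda$-profile, is a legitimate shortcut for the paper's rescaling in Step~2 (the paper sets $\tilde u_k(x)=r_k^{(n-2)/2}u(\lambda_k x)$ and passes to the limit $c_{n,s}(|x|^{2-n}+1)$; the two routes produce the same leading constant $\tfrac{(n-2)^2}{2}c_{n,s}^2\omega_{n-1}$). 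Your derivation of $\tau_{k+1}=(1+o(1))\sqrt{r_kr_{k+1}}$ from the explicit (MB) asymptotics is also fine and parallels the paper's Step~1.

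There is one genuine gap. Your estimate
\[
\int_{B_{\tau_{k+1}}(0)}u^{q+1}\,dx=(1+o(1))\,r_{k+1}^{\frac{n-2}{2}(\crit-1-q)}\int_{\rn}U_1^{q+1}\,dx
\]
silently assumes that $U_1\in L^{q+1}(\rn)$, which by the decay $U_1(x)\sim c_{n,s}|x|^{2-n}$ at infinity is equivalent to $(n-2)(q+1)>n$, i.e.\ $q>\tfrac{2}{n-2}$. You do not establish this \emph{a priori}, and it does not follow from the hypotheses of the proposition: the standing assumption is only $q<\crit-1$. You plan to deduce $q>\crit-2=\tfrac{4}{n-2}$ a posteriori from the recursion, but you cannot write the recursion in the first place if $q\le\tfrac{2}{n-2}$, because then the integral $\int_{B_{\tau_{k+1}}}u^{q+1}$ picks up an extra divergent factor (a logarithm in $r_k/r_{k+1}$ at the borderline $q=\tfrac{2}{n-2}$, a polynomial factor below) and the asymptotic no longer has the form needed to solve for $r_{k+1}$. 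This is exactly what the paper's Step~3 handles: it separately rules out $q\le\tfrac{2}{n-2}$ by a contradiction argument balancing $P_{\lambda_k}^{(q)}(u)\asymp(\lambda_k/r_k)^{n-2}$ against the (then divergent) volume integral, producing an inequality like $1\le Cr_k^{\frac{n-2}{2}}r_{k+1}(\cdots)\to0$. You would need to insert a similar argument before invoking $\int_{\rn}U_1^{q+1}<\infty$. (Your remark that the $\mu$-term in the Pohozaev boundary integral is "even smaller once $q>\crit-2$" is also unnecessary and slightly imprecise --- that term is already subdominant for all $q<\crit-1$ once $r_{k+1}=o(r_k)$, with no extra hypothesis --- but this does not affect the argument.)
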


\begin{proof}[Proof of Proposition \ref{prop:est:rk}] 
We define $\lambda_k:=\sqrt{r_{k+1}r_k}$. 
By Proposition \ref{prop:prelim:1}, we have $P^{(q)}(u)=0$ so that by letting $r_1\to 0$ and $r_2=\lambda_k$ in 
\eqref{id:poho:invar}, we find that 
\begin{equation}\label{eq:P:lambda}
P_{\lambda_k}^{(q)}(u)=\frac{(n-2)}{2(q+1)}(\crit-1-q)\mu \int_{B_{\lambda_k}(0)} u^{q+1}\, dx.
\end{equation}

\noindent We divide the proof of Proposition \ref{prop:est:rk} into four steps. The first assertion of \eqref{est:r} is proved in Step~1. 
The left-hand side of \eqref{eq:P:lambda} is estimated in \eqref{est:P:1}, see Step~2.
Then, in Step~3, we prove $q>2/(n-2)$, which gives that $U_1\in L^{q+1}(\rn)$. We estimate $ \int_{B_{\lambda_k}(0)} u^{q+1}\, dx$ in \eqref{vam}, see Step~4.  
From \eqref{eq:P:lambda}, \eqref{est:P:1} and \eqref{vam}, we conclude the second claim of \eqref{est:r}, which implies \eqref{q-ineq} since $r_k\to 0$ as
$k\to +\infty$.        

\smallskip\noindent{\bf Step 1:} We claim that
\begin{equation}\label{lim:tau}
\lim_{k\to +\infty}\frac{\tau_{k+1}}{\lambda_k}=1.
\end{equation}
\smallskip\noindent{\it Proof of the claim:} 
Since $r_{k+1}=o(r_k)$ as $k\to +\infty$, we see that $r_{k+1}=o(\lambda_k)$ and $\lambda_k=o(r_k)$ as $k\to +\infty$. 
For any $k\in \nn$, we define
\begin{equation}\label{def:tuk}
\tilde{u}_k(x):=r_k^{\frac{n-2}{2}}u(\lambda_k x)\hbox{ for }x\in B_{1/\lambda_k}(0)\setminus\{0\}.
\end{equation}
We show that 
\neweq{lim:u:t}
\lim_{k\to +\infty}\tilde{u}_k(x)=\tilde{u}(x):=c_{n,s}\left(|x|^{2-n}+1\right)\quad \text{in } C^2_{\rm loc}(\rn\setminus\{0\}).
\endeq
Using the pointwise control of Proposition \ref{prop:sharp:lim}, we obtain that $u_k(x)\to \tilde u(x)$ for all $x\in\rn\setminus\{0\}$.  
Moreover, equation \eqref{eq:70} rewrites
$$-\Delta \tilde{u}_k=\left(\frac{\lambda_k}{r_k}\right)^{2-s}\frac{\tilde{u}_k^{\crits-1}}{|x|^s}-\mu r_k^{\frac{n-2}{2}(\crit-1-q)}\left(\frac{\lambda_k}{r_k}\right)^{2}\tilde{u}_k^q
\quad \text{in } B_{1/\lambda_k}(0)\setminus\{0\}. $$
Using this equation, \eqref{lim:u:t} and the elliptic theory, we obtain \eqref{lim:u:t}. 

\vspace{0.2cm}
\noindent Let $w$ be given by \eqref{def:w}. We define
\neweq{clock} \tilde{w}_k(r):=\left(\frac{r_k}{\lambda_k}\right)^{\frac{n-2}{2}}w(\lambda_k r)=r^{\frac{n-2}{2}}\overline{\tilde{u}_k}(r)\quad \text{for all } r>0.\endeq
Passing to the limit in \eqref{clock} and using \eqref{lim:u:t}, we get that
\neweq{ncon} \lim_{k\to +\infty}\tilde{w}_k(r)=c_{n,s}(r^{-\frac{n-2}{2}}+r^{\frac{n-2}{2}})\quad \hbox{for all }r>0.\endeq
Moreover, the convergence in \eqref{ncon} holds in $C^2_{\rm loc}(\rr\setminus\{0\})$. Since $r\longmapsto r^{-\frac{n-2}{2}}+r^{\frac{n-2}{2}}$ has a nondegenerate local minimum point at $r=1$, then for $k$ large, $\tilde{w}_k$ admits a critical point $\rho_k$ such that $\lim_{k\to +\infty}\rho_k=1$. Thus, for $k$ large, $w$ admits a nondegenerate local minimum at $\lambda_k \rho_k$. 
We have $r_{k+1}<\lambda_k \rho_k<r_k$ for $k$ large enough. Hence, from the uniqueness of the critical points in 
Proposition~\ref{prop:construct:radii}, we find that $\tau_{k+1}=\lambda_k \rho_k$. This yields \eqref{lim:tau} and ends Step 1.\qed 
 
\medskip\noindent{\bf Step 2:} We claim that  
\begin{equation}\label{est:P:1}
P_{\lambda_k}^{(q)}(u)=\left(\frac{(n-2)^2}{2}c_{n,s}^2\omega_{n-1}+o(1)\right)\left(\frac{\lambda_k}{r_k}\right)^{n-2}\quad \text{as } k\to +\infty. 
\end{equation}
\smallskip\noindent{\it Proof of the claim:} 
For $\tilde u$ given by \eqref{lim:u:t}, 
a straightforward computation yields that
\neweq{mcv} \int_{\partial B_{1}(0)}\left[(x,\nu)\frac{|\nabla \tilde{u}|^2}{2}-T(x,\tilde u)\,\partial_\nu \tilde{u}\right]\, d\sigma=\frac{(n-2)^2}{2}c_{n,s}^2\,\omega_{n-1}.\endeq
From the limit \eqref{lim:u:t} and $\lambda_k=o(r_k)$ as $k\to +\infty$, we get that
$$  \mathcal G_k(x,\tilde u):=\left(\frac{\lambda_k}{r_k}\right)^{2-s}\frac{\tilde{u}_k^{\crits}}{\crits|x|^s}- \mu \left(\frac{\lambda_k}{r_k}\right)^{2}r_k^{\frac{n-2}{2}(\crit-1-q)}\frac{\tilde{u}_k^{q+1}}{q+1}\to 0\ \text{as } k\to \infty
$$ uniformly with respect to $x\in \partial B_1(0)$. 
Hence, 
using the definition \eqref{def:P:invar} of the Pohozaev-type integral, the definition \eqref{def:tuk} of $\tilde{u}_k$ and a change of variable, we have
\begin{eqnarray*}
P_{\lambda_k}^{(q)}(u)&=&\int_{\partial B_{\lambda_k}(0)}\left[(x,\nu)\left(\frac{|\nabla u|^2}{2}-\frac{u^{\crits}}{\crits|x|^s}+\mu\frac{u^{q+1}}{q+1}\right)
- T(x,u)\,\partial_\nu  u\right]\, d\sigma\\
&=& \left(\frac{\lambda_k}{r_k}\right)^{n-2}\int_{\partial B_{1}(0)}\left[(x,\nu)\left(\frac{|\nabla \tilde{u}_k|^2}{2}-
 \mathcal G_k(x,\tilde u)\right) -T(x,\tilde u_k)\,\partial_\nu \tilde{u}_k
\right]\, d\sigma\\
&=& \left(\frac{\lambda_k}{r_k}\right)^{n-2}\left(\int_{\partial B_{1}(0)}\left[(x,\nu)\frac{|\nabla \tilde{u}|^2}{2}-T(x,\tilde u)\,\partial_\nu \tilde{u}
\right]\, d\sigma+o(1)\right)
\end{eqnarray*}
as $k\to +\infty$. This, jointly with \eqref{mcv} proves \eqref{est:P:1}. This ends Step~2. \qed

\medskip\noindent{\bf Step 3:} We claim that $q>2/(n-2)$ and $U_1\in L^{q+1}(\rn)$. 

\smallskip\noindent{\it Proof of the claim:}  From $ \limsup_{x\to 0}|x|^{\frac{n-2}{2}}u(x)<\infty$,   
there exists $C>0$ such that $|x|^{\frac{n-2}{2}}u(x)\leq C$ for all $x\in B_{1/2}(0)\setminus\{0\}$. 
Since $q<\crit-1$, we find that 
\begin{equation}\label{est:P:2}
\mathcal T_{1,k,R} \leq C  (R^{-1}r_{k+1})^{\frac{n-2}{2}(\crit-1-q)},\quad \text{where }  \mathcal T_{1,k,R}:=\int_{B_{R^{-1}r_{k+1}}(0)} u^{q+1}\, dx 
\end{equation}
for any $R>0$. We denote
\neweq{nota} 
\left\{ \begin{aligned}
& \mathcal T_{2,k,R}:=\int_{B_{Rr_{k+1}}(0)\setminus B_{R^{-1}r_{k+1}}(0)} u^{q+1}\, dx,\\ 
& \mathcal T_{3,k,R}:=\int_{B_{\lambda_k}(0)\setminus B_{Rr_{k+1}}(0)} u^{q+1}\, dx. 
\end{aligned} \right. \endeq
By Proposition \ref{prop:construct:radii}, $u_{k}\to U_1$ in $C^2_{\rm loc}(\rn\setminus\{0\})$, where $u_{k}(z):=r_{k}^{\frac{n-2}{2}}u(r_{k} z)$. Hence, 
\neweq{est:P:3}
\begin{aligned}
\mathcal T_{2,k,R} &=r_{k+1}^{\frac{n-2}{2}(\crit-1-q)}\int_{B_R(0)\setminus B_{R^{-1}}(0)}u_{k+1}^{q+1}\, dz\\
&=r_{k+1}^{\frac{n-2}{2}(\crit-1-q)}\left(\int_{B_R(0)\setminus B_{R^{-1}}(0)}U_1^{q+1}\, dz+o(1)\right)
\end{aligned}\endeq
as $k\to +\infty$. Using the optimal control of Proposition \ref{prop:sharp:lim}, we find that
\neweq{est:P:4}
\begin{aligned}
\mathcal T_{3,k,R}  &\leq  C\int_{B_{\lambda_k}(0)\setminus B_{Rr_{k+1}}(0)} \left(r_{k+1}^{\frac{n-2}{2}(q+1)}|x|^{-(n-2)(q+1)}+r_k^{-\frac{n-2}{2}(q+1)}\right)\, dx\\
&\leq  C r_{k+1}^{\frac{n-2}{2}(q+1)}\int_{Rr_{k+1}}^{\lambda_k}r^{1-(n-2)q}\, dr+ C \lambda_k^n r_k^{-\frac{n-2}{2}(q+1)}\ \text{as } k\to +\infty. 
\end{aligned} \endeq

\noindent Assume by contradiction that $q\leq 2/(n-2)$. By \eqref{est:P:2}, \eqref{est:P:3} and \eqref{est:P:4}, we have
$$
\int_{B_{\lambda_k}(0)} u^{q+1}\, dx\leq Cr_{k+1}^{\frac{n-2}{2}(\crit-1-q)}+ C r_{k+1}^{\frac{n-2}{2}(q+1)}
\times \left\{ 
\begin{aligned} 
& \ln \frac{\lambda_k}{r_{k+1}} && \text{if } q=\frac{2}{n-2}&\\
& \lambda_k^{n-(n-2)(q+1)} && \text{if } q<\frac{2}{n-2}&
\end{aligned} \right.
$$
as $k\to \infty$. Combining this inequality with \eqref{eq:P:lambda} and \eqref{est:P:1}, we get that
$$ \left(\frac{\lambda_k}{r_k}\right)^{n-2}\leq \left\{ \begin{aligned} 
& C r_{k+1}^{\frac{n}{2}} \ln \frac{\lambda_k}{r_{k+1}} && \text{if } q=\frac{2}{n-2}&\\
& C r_{k+1}^{\frac{n-2}{2}(\crit-1-q)}+ C r_{k+1}^{\frac{n-2}{2}(q+1)}\lambda_k^{n-(n-2)(q+1)} && \text{if } q<\frac{2}{n-2}&
\end{aligned} \right. $$
as $k\to \infty$
Then, since $\lambda_k=\sqrt{r_k r_{k+1}}$, we obtain that
$$1\leq C r_k^{\frac{n-2}{2}}r_{k+1}\times 
\left\{ \begin{aligned} 
& \left(\ln r_k-\ln r_{k+1}\right) 
&& \text{if } q=\frac{2}{n-2}&\\ 
& \left(r_k^{\frac{2-(n-2)q}{2}}+r_{k+1}^\frac{2-(n-2)q}{2}\right) 
&& \text{if } q<\frac{2}{n-2}& 
\end{aligned} \right. 
$$
as $k\to +\infty$, which is a contradiction since $r_k\to 0$ as $k\to +\infty$. 

\smallskip\noindent Hence, $q>2/(n-2)$, which yields that $U_1\in L^{q+1}(\rn)$, concluding Step 3.\qed

\medskip\noindent{\bf Step 4:} We claim that 
\neweq{vam}
\int_{B_{\lambda_k}(0)} u^{q+1}\, dx=r_{k+1}^{\frac{n-2}{2}(\crit-1-q)}\left(\int_{\rn}U_1^{q+1}\, dx+o(1)\right)\ \text{as } k\to +\infty. 
\endeq

\smallskip\noindent{\it Proof of the claim:} 
Since $q>2/(n-2)$, inequality \eqref{est:P:4} yields
\neweq{est:P:5}
\mathcal T_{3,k,R}
\leq  r_{k+1}^{\frac{n-2}{2}(\crit-1-q)}\left(C R^{2-(n-2)q}+ C \left(\frac{r_{k+1}}{r_k}\right)^{\frac{(n-2)q-2}{2}}\right).
\endeq
Recall that $\mathcal T_{i,k,R}$ with $i=1,2,3$ are given by
\eqref{est:P:2} and \eqref{nota}. We have $$\int_{B_{\lambda_k}(0)} u^{q+1}\, dx=\mathcal T_{1,k,R}+\mathcal T_{2,k,R}+\mathcal T_{3,k,R} \quad \text{for all } R>0.$$ 
Letting $k\to +\infty$ and then $R\to +\infty$ in \eqref{est:P:2}, \eqref{est:P:3} and \eqref{est:P:5}, we get \eqref{vam}. \qed

\smallskip
\noindent This completes the proof of Proposition \ref{prop:est:rk}. \end{proof}

\section{Proof of Theorem \ref{thm:1}}

Let $u\in C^\infty(B_1(0)\setminus \{0\})$ be a positive solution to \eqref{eq:u}. 
If $q>\crit-1$, then by Corollary~\ref{coro:remove}, zero is a removable singularity. 
If $\crits-1<q<\crit-1$, then by Proposition \ref{prop:lim:1}, the solution $u$ either develops a (ND) profile, or 
\begin{equation}\label{pf:1}
\limsup_{x\to 0} |x|^{\frac{n-2}{2}}u(x)<+\infty. 
\end{equation}
If $q\leq\crits-1$, then Proposition \ref{prop:bnd:u} gives that \eqref{pf:1} also holds.

\medskip\noindent Now, for $q<\crit-1$, assuming \eqref{pf:1}, it follows from Propositions \ref{prop:remove}, \ref{prop:sharp:lim:opt} and \ref{prop:est:rk} that either zero is a removable singularity, or $u$ develops a singularity of type (MB), or there exist positive constants $c_1$ and $c_2$ such that
\begin{equation}\label{pf:2}
c_1\leq |x|^{\frac{n-2}{2}}u(x)\leq c_2\hbox{ for all }x\in B_{1/2}(0)\setminus \{0\}.
\end{equation}
We assume that \eqref{pf:2} holds. Since $q<\crit-1$, following step by step the proof of Theorem 4.1 in Hsia--Lin--Wang \cite{hlw} (pages 1642 to 1648), one gets that $u$ develops a singularity of (CGS) type. The difference with the case dealt with in \cite{hlw} is that the Pohozaev integral $P^{(q)}_r(u)$ is not constant (see \eqref{def:P:invar}). However, it has a finite limit $P^{(q)}(u)$ as $r\to 0$. Therefore, every limiting potential profile $U$ given by Lemma \ref{lem:prelim:1} has a Pohozaev invariant (defined in \eqref{poho:invar:r}) such that $P(U)=P^{(q)}(u)$. It follows from \eqref{pf:2} that $U$ is singular at $0$, and therefore Proposition \ref{prop:poho:U} yields $P(U)>0$. As a consequence, we have that $P^{(q)}(u)>0$. This is enough to make the argument in \cite{hlw} work.

\medskip\noindent All these steps prove Theorem \ref{thm:1}.

\section{The case $q=\crit-1$}\label{sec:q:crit}

The situation here is somehow different since the nonlinearity $u^{\crit-1}$ is invariant after the rescaling performed in Lemma \ref{lem:prelim:1}. We prove the following:
\begin{prop}\label{prop:q:crit} Let $u\in C^\infty(\ballp)$ be a positive solution to
\begin{equation}\label{eq:crit}
-\Delta u= \frac{u^{\crits-1}}{|x|^s}-\mu u^{\crit-1} \hbox{ in }\ballp.
\end{equation}
Then either $0$ is a removable singularity, or there exist $c_1,c_2>0$ such that
\begin{equation}\label{bnd:inf:sup}
c_1|x|^{-\frac{n-2}{2}}\leq u(x)\leq c_2|x|^{-\frac{n-2}{2}}\hbox{ for all }x\in B_{1/2}(0)\setminus\{0\}.
\end{equation}
\end{prop}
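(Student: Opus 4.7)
For $q = \crit - 1$, definition \eqref{def:p} gives $p = 2/(q-1) = (n-2)/2$, so Proposition \ref{prop:bnd:u} immediately delivers the upper bound $u(x) \leq c_2 |x|^{-(n-2)/2}$ on $B_{1/2}(0)\setminus\{0\}$. The plan therefore reduces to the converse half of the dichotomy: if $\liminf_{x\to 0} |x|^{(n-2)/2} u(x) = 0$, I will show that $\lim_{x\to 0} |x|^{(n-2)/2} u(x) = 0$, for then Proposition \ref{prop:remove} yields the removability of the singularity.

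The decisive structural observation at the critical exponent is that the coefficient $\crit - 1 - q$ on the right-hand side of \eqref{id:poho:invar} vanishes, so $r \mapsto P^{(q)}_r(u)$ is a genuine constant on $(0,1)$, which I denote $P_u$. Picking $x_i \to 0$ along which $|x_i|^{(n-2)/2} u(x_i) \to 0$ and rescaling $u_i(x) := |x_i|^{(n-2)/2} u(|x_i| x)$, Lemma \ref{lem:prelim:1} (applied to \eqref{eq:lim:2} with $\lambda = \mu$) yields a $C^2_{\mathrm{loc}}(\rnp)$-limit $U \geq 0$ satisfying the critical limit equation $-\Delta U = |x|^{-s} U^{\crits-1} - \mu U^{\crit-1}$. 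Since $u_i(x_i/|x_i|) \to 0$, $U$ vanishes at a point of $\partial B_1(0)$; the strong maximum principle (applied to the linear operator $-\Delta + c(x)$ with bounded coefficient $c(x) := \mu U(x)^{\crit-2} - |x|^{-s} U(x)^{\crits-2}$) then forces $U \equiv 0$ on $\rnp$. Passing $P^{(q)}_1(u_i) = P_u$ to the limit using the $C^2$ convergence on $\partial B_1(0)$ yields $P_u = 0$.

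I then argue by contradiction: suppose $\limsup_{x\to 0} |x|^{(n-2)/2} u(x) > 0$. The construction of Proposition \ref{prop:construct:radii} uses only the upper bound and the hypothesis $\liminf = 0 < \limsup$, so it adapts to $q = \crit - 1$ and produces radii $\tau_{k+1} < r_k \to 0$ with $r_{k+1} = o(r_k)$, $w(\tau_k) \to 0$, and $w(r_k) \geq \epsilon_0 > 0$, where $w(r) := r^{(n-2)/2}\bar u(r)$. Setting $\lambda_k := \sqrt{r_k r_{k+1}}$ and $\tilde u_k(x) := r_k^{(n-2)/2} u(\lambda_k x)$, the scale-transition analysis of Step~2 in the proof of Proposition \ref{prop:est:rk}, extended to the critical exponent, yields
$$
P^{(q)}_{\lambda_k}(u) = (c + o(1))\!\left(\frac{\lambda_k}{r_k}\right)^{\!n-2}\qquad\text{as } k\to +\infty,
$$
where $c > 0$ is the positive constant $(n-2)^2 AB \,\omega_{n-1}/2$ obtained from integrating the rescaled profile $\tilde u(x) = A + B|x|^{2-n}$ (with $A, B > 0$ given by the value at zero and the $|x|^{2-n}$ coefficient at infinity of the bubble limit) over $\partial B_1(0)$. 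On the other hand, the Pohozaev constancy and $P_u = 0$ force $P^{(q)}_{\lambda_k}(u) = 0$ for every $k$, so the identity is impossible for $k$ large.

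The main obstacle in this plan is the extension of Propositions \ref{prop:sharp:lim} and \ref{prop:est:rk}, stated for $q < \crit - 1$, to the critical exponent $q = \crit - 1$. At this exponent the rescaled limit at $r_k$ is no longer the explicit bubble $U_1$ (because of the $-\mu U^{\crit-1}$ term), so the two-bubble profile $A + B|x|^{2-n}$ that emerges at $\lambda_k$ must be re-derived. The ingredients are $C^2_{\mathrm{loc}}$ compactness, radial symmetry via the moving-plane method (as in Chou--Chu and Hsia--Lin--Wang), and the rigidity following from $P(U) = 0$ in analogy with Proposition \ref{prop:poho:U}, which forces the bubble to be regular at the origin with $U(0) = A$ and to decay like $B|x|^{2-n}$ at infinity; matching the near-field expansion of the bubble at $r_k$ with the far-field expansion of the bubble at $r_{k+1}$ produces the same constants $A, B$ in the two contributions at $\lambda_k$. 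The conflict between the honest constancy of $P^{(q)}_r(u)$ at $q = \crit - 1$ and this positive coefficient $c$ is precisely the mechanism by which the critical exponent forbids the Multi-Bump scenario.
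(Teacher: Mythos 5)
Your overall strategy is on the right track — the decisive observation that $r\mapsto P^{(\crit-1)}_r(u)$ is \emph{exactly constant} at the critical exponent, and that this constant must be shown to vanish and be positive at the same time, is precisely the engine of the paper's argument. Your derivation of $P_u=0$ is also valid: rescaling $u_i(x)=|x_i|^{(n-2)/2}u(|x_i|x)$ along a vanishing sequence, passing to a limit $U\geq 0$ that satisfies the full critical equation, killing it by the strong maximum principle since $U$ vanishes on $\partial B_1(0)$, and then pushing the scale-invariant Pohozaev integral $P_1^{(\crit-1)}(u_i)=P_u$ to the limit — all of this works.

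The genuine gap is in the contradiction step, and you have correctly flagged where it lies but underestimated its severity. The bubble-matching mechanism you want to import from Propositions \ref{prop:sharp:lim}--\ref{prop:est:rk} fundamentally fails at $q=\crit-1$, for a structural reason rather than a technical one. When you rescale at $r_k$ with the factor $r_k^{(n-2)/2}$, the coefficient $r_k^{\frac{n-2}{2}(\crit-1-q)}$ in \eqref{eq:ui} equals $1$, so the $-\mu U^{\crit-1}$ term survives in the limit: the scale-$r_k$ profile is a solution of $-\Delta U = |x|^{-s}U^{\crits-1}-\mu U^{\crit-1}$ on $\rnp$, not of the conformally invariant equation $-\Delta U=|x|^{-s}U^{\crits-1}$. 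Consequently the rigidity of Proposition \ref{prop:poho:U} ($P(U)=0\Rightarrow U$ is the regular bubble $U_\lambda$) is simply not available, and indeed the paper's own Proposition \ref{prop:q:crit:lim} shows that positive solutions of the $\mu$-perturbed equation on $\rnp$ are pinched between multiples of $|x|^{-(n-2)/2}$ — they are singular at the origin and have no $A+B|x|^{2-n}$ structure to match. Moreover the a priori bound you need for $\tilde u_k(x)=r_k^{(n-2)/2}u(\lambda_k x)$ is exactly the sharp two-bubble estimate of Proposition \ref{prop:sharp:lim}, which has not been proved (and is not true in the same form) at $q=\crit-1$, so compactness of $(\tilde u_k)$ is also unavailable. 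Constructing Proposition \ref{prop:construct:radii} itself is problematic at $q=\crit-1$, since its proof leans on Proposition \ref{prop:prelim:1} whose dichotomy ($U\equiv 0$ or $U=U_\lambda$) uses Proposition \ref{prop:poho:U}.

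The paper gets the contradiction much more cheaply by choosing a different normalization: at local minima $t_i$ of $w$ with $w(t_i)\to 0$, set $v_i(x):=u(t_i x)/\bar u(t_i)$. Then $(v_i)$ is bounded above and below by Harnack alone (Lemma \ref{lem:harnack}), and both coefficients $w(t_i)^{\crits-2}$ and $w(t_i)^{\crit-2}$ in the rescaled equation vanish, so the limit is a \emph{harmonic} function $V(x)=a|x|^{2-n}+b$ with $a=b=1/2$ determined by the normalization $\bar v_i(1)=1$ and $w'(t_i)=0$. A direct scaling computation then gives $P^{(\crit-1)}_{t_i}(u)/w(t_i)^2\to \frac{(n-2)^2}{8}\omega_{n-1}>0$, which yields simultaneously $P_u=\lim P_{t_i}=0$ (since $w(t_i)\to 0$) and $P_{t_i}>0$ for $i$ large — the contradiction in one stroke, with no intermediate bubble analysis, no sharp pointwise estimate, and no appeal to the classification of Proposition \ref{prop:q:crit:lim}. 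If you want to salvage your plan, replace your $\tilde u_k$ rescaling at $\lambda_k$ with the paper's $v_i$ rescaling at the minima $t_i$; this removes the need to understand the profile at the maxima $r_k$ entirely.
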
  

\begin{proof}[Proof of Proposition \ref{prop:q:crit}] We follow the strategy developed in Korevaar--Mazzeo--Pacard--Schoen \cite{kmps} and skip some details. We argue by contradiction and we assume that $0$ is not a removable singularity and that \eqref{bnd:inf:sup} does not hold. By Propositions \ref{prop:bnd:u} and \ref{prop:remove}, it follows that 
\begin{equation}\label{hyp:inf:sup:3}
\liminf_{x\to 0}|x|^{\frac{n-2}{2}}u(x)=0\text{ and }\  \limsup_{x\to 0}|x|^{\frac{n-2}{2}}u(x)\in (0,\infty).
\end{equation}
As in \eqref{def:w}, we define $w(r)=r^{\frac{n-2}{2}}\bar{u}(r)$ for any $r\in (0,1)$. 

\medskip\noindent{\bf Step 1:} We claim that there exists $(t_i)_i\in (0,1/2)$ such that 
\begin{equation}\label{hyp:pt:crit}
\lim_{i\to +\infty}t_i=0,\quad 
\lim_{i\to +\infty}w(t_i)=0\quad \text{and} \quad 
w'(t_i)=0
\end{equation}

\smallskip\noindent{\it Proof of the claim:} Arguing as in the proof of Proposition \ref{prop:construct:radii}, we get that critical points to $r\longmapsto w(r)=r^{\frac{n-2}{2}}\bar{u}(r)$ below a certain threshold are strict local minima. Therefore, if \eqref{hyp:pt:crit} does not hold, then either $w(r)$ stays above a given positive value, or it is monotonic for small $r$, and therefore has a limit as $r\to 0$. These two situations contradict \eqref{hyp:inf:sup:3}. Then there exists $(t_i)_{i}\in (0,1/2)$ such that \eqref{hyp:pt:crit} holds. This proves the claim and ends Step 1.\hfill$\Box$

\medskip\noindent{\bf Step 2:} By defining $v_i(x):=\frac{u(t_i x)}{\bar{u}(t_i)}$ for all $0<|x|<1/t_i$, we claim that
\begin{equation}\label{lim:u:ri}
\lim_{i\to +\infty}v_i(x)=\frac{1}{2}\left(|x|^{2-n}+1\right)\hbox{ in }C^2_{\rm loc}(\rn\setminus\{0\}).
\end{equation}

\medskip\noindent{\it Proof of the claim:}  
Equation \eqref{eq:crit} rewrites as follows
$$-\Delta v_i=w(t_i)^{\crits-2}\frac{v_i^{\crits-1}}{|x|^s}-\mu \, w(t_i)^{\crit-2}v_i^{\crit-1}\quad \text{in } B_{1/t_i}(0)\setminus\{0\}.$$
The Harnack inequality of Lemma \ref{lem:harnack} gives that for any $R>1$, there exist $C_R>0$ and $i_R\in \nn$ such that
\begin{equation}\label{bnd:ui:bis}
1/C_R\leq v_i(x)\leq C_R\hbox{ for all }1/R<|x|<R \hbox{ and }i\geq i_R.
\end{equation}
From \eqref{hyp:pt:crit}, \eqref{bnd:ui:bis} and standard elliptic theory (see for instance \cite{gt}), it follows that there exists $V\in C^2(\rn\setminus\{0\})$ such that
\begin{equation*}
\left\{\begin{array}{ll}
-\Delta V=0 & \hbox{ in }\rn\setminus\{0\}\\
V>0 & \hbox{ in }\rn\setminus\{0\}\\
\lim_{i\to +\infty} v_i=V& \hbox{ in }C^2_{\rm loc}(\rn\setminus\{0\}).
\end{array}\right.
\end{equation*}
By Liouville's theorem, there exist $a,b\geq 0$ such that $V(x)=a|x|^{2-n}+b$ for all $x\in \rnp$. By the mean value theorem, for any $i\in\nn$, there exists $\theta_i\in \partial B_1(0)$ such that $v_i(\theta_i)=1$: taking a subsequence and passing to the limit yields $a+b=1$. Moreover, passing to the limit in the third assumption of \eqref{hyp:pt:crit} yields $(r^{\frac{n-2}{2}}V(r))'(1)=0$, which gives $a=b$. 
This proves \eqref{lim:u:ri}. This ends Step 2.\hfill$\Box$

\medskip\noindent{\bf Step 3:} Here goes the final argument to get the contradiction. Recall the definition of the Pohozaev integral given in \eqref{def:P:invar}:
$$
P_r^{(\crit-1)}(u)=\int_{\partial B_r(0)}\left[(x,\nu)\left(\frac{|\nabla u|^2}{2}-\frac{u^{\crits}}{\crits|x|^s}+\mu\frac{u^{\crit}}{\crit}\right)-T(x,u)\,\partial_\nu u\right]\, d\sigma.
$$
From \eqref{poho:id}, we see that $P_r^{(\crit-1)}(u)$ is independent of $r\in (0,1)$: let $P^{(\crit-1)}(u)$ be the common value.
For $i\in \nn$ and $x\in B_{1/t_i}(0)\setminus\{0\}$, we denote
$$\mathcal P_i(x):= w(t_i)^{\crits-2}\frac{v_i^{\crits}(x)}{\crits|x|^s}-\mu \, w(t_i)^{\crit-2}\frac{v_i^{\crit}(x)}{\crit}. $$
From \eqref{hyp:pt:crit} and the convergence in \eqref{lim:u:ri}, we have $\lim_{i\to +\infty} \mathcal P_i(x)= 0$  uniformly with respect to $x\in \partial B_1(0)$. 
Using a change of variable, we find that
$$ \frac{P_{t_i}^{(\crit-1)}(u)}{w(t_i)^2}
=\int_{\partial B_1(0)}\left[(x,\nu)\left(\frac{|\nabla v_i|^2}{2}-\mathcal P_i(x)\right)- T(x,v_i) \,\partial_\nu v_i \right]\, d\sigma.
$$
Taking the limit $i\to +\infty$ yields
\begin{equation}\label{asymp:ri}
\lim_{i\to +\infty}\frac{P_{t_i}^{(\crit-1)}(u)}{w(t_i)^2}=\frac{(n-2)^2}{8}\omega_{n-1}.
\end{equation}
On the one hand, since $w(t_i)\to 0$ as $i\to +\infty$, we get that $\lim_{i\to +\infty}P_{t_i}^{(\crit-1)}(u)=0$. Therefore $P^{(\crit-1)}(u)=0$. On the other hand, \eqref{asymp:ri} yields $P_{t_i}^{(\crit-1)}(u)>0$ for $i$ large enough, and thus $P^{(\crit-1)}(u)>0$. This is a contradiction. This ends Step 3.

\medskip\noindent Proposition \ref{prop:q:crit} follows from the contradiction obtained in Step 3.\end{proof}

\medskip\noindent As in the case $q\neq\crit-1$, it is natural to investigate more precisely the behavior around $0$, and, hopefully, get a (CGS) profile. The key is to understand the solutions on $\rnp$, which happen to be very sensitive to the choice of the parameter $\mu>0$.

\medskip\noindent We define
\begin{equation*}
\mu_0(n,s):=\frac{(2-s)s^{\frac{s}{2-s}}}{2^{\frac{2(1-s)}{2-s}}(n-2)^{\frac{2s}{2-s}}}\hbox{ and }\mu_1(n,s):=\frac{(2-s)n}{2(n-s)}\left(\frac{2s(n-s)}{n-2}\right)^{\frac{s(n-2)}{2-s}}.
\end{equation*}
As one checks, $0<\mu_1(n,s)<\mu_0(n,s)$.
\begin{prop}[Solutions on $\rnp$]\label{prop:q:crit:lim} For $\mu>\mu_{0}(n,s)$ there is no positive solution to 
\begin{equation}\label{added}
-\Delta u= \frac{u^{\crits-1}}{|x|^s}-\mu u^{\crit-1} \hbox{ in }\rnp.
\end{equation}
For $\mu:=\mu_0(n,s)$, the only positive solution to \eqref{added} is 
$$u(x)=\left(\frac{2-s}{2\mu_0(n,s)}\right)^{\frac{n-2}{2s}}|x|^{-\frac{n-2}{2}}\hbox{ for all }x\in\rnp.$$

\smallskip\noindent When $0<\mu<\mu_0(n,s)$, then for any solution $u$ to \eqref{added}, there exist $c_u,C_u>0$ such that
$$c_u|x|^{-\frac{n-2}{2}}\leq u(x)\leq C_u|x|^{-\frac{n-2}{2}}\hbox{ for all }x\in \rnp.$$
Moreover, any {\it radial} positive solution $u\in C^\infty(\rnp)$ to \eqref{added} is of the form $u(x)=|x|^{-\frac{n-2}{2}}v(-\ln|x|)$ for all $x\in\rnp$, where $v:\rr\to\rr$ is a smooth positive function bounded from above and below by positive constants. In addition, still for radial solutions,
\begin{itemize}
\item If $0<\mu\leq \mu_1(n,s)$, then $v$ is periodic.
\item If $\mu_1(n,s)<\mu<\mu_0(n,s)$, then either $\{ \, v$ is periodic $\}$, or $\{\, v$ is nonconstant with a positive limit as $|x|\to \infty\, \}$.
\end{itemize}
\end{prop}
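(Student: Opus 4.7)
\medskip\noindent\textbf{Proof plan.} My plan is to combine a radial symmetry reduction with phase-plane analysis of the resulting ODE. First, I would show that every positive $u\in C^\infty(\rnp)$ solving \eqref{added} is radially symmetric about $0$: the equation is invariant under the Kelvin transform $x\mapsto x/|x|^2$ (both $|x|^{-s}u^{\crits-1}$ and $u^{\crit-1}$ being critical under the conformal structure), so one may adapt the moving-plane arguments of Chou--Chu \cite{cc} and Hsia--Lin--Wang \cite{hlw} to the full equation. With $u$ radial, the conformal computation of \eqref{conf:lap} rewrites \eqref{added} as the autonomous ODE
$$-v''+\frac{(n-2)^2}{4}v=v^{\crits-1}-\mu v^{\crit-1}\ \hbox{ on }\rr,\quad \hbox{with } u(x)=|x|^{-\frac{n-2}{2}}v(-\ln|x|),$$
which admits the conserved quantity $\mathcal E=V(v)-(v')^2/2$, where $V(v):=\frac{(n-2)^2}{8}v^2-\frac{v^{\crits}}{\crits}+\mu\frac{v^{\crit}}{\crit}$.

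\medskip\noindent Positive equilibria of the ODE are positive solutions of $v^{\crits-2}-\mu v^{\crit-2}=(n-2)^2/4$, and a direct calculation shows that $\psi(v):=v^{\crits-2}-\mu v^{\crit-2}$ attains its unique positive maximum at $v_c=(2\mu/(2-s))^{-(n-2)/(2s)}$, with $\psi(v_c)=(n-2)^2/4$ precisely when $\mu=\mu_0(n,s)$. This gives three regimes for the critical set of $V$. If $\mu>\mu_0$, then $V'>0$ on $(0,\infty)$, so $v''=V'(v)>0$ along any positive orbit; an energy-level-set argument then shows such orbits either reach $v=0$ in finite time or blow up in finite time, ruling out any bounded positive solution on $\rr$ and hence (via the Emden--Fowler substitution) any positive solution on $\rnp$. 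If $\mu=\mu_0$, then $v_c$ is the unique (degenerate) positive critical point of $V$, $V$ is still strictly increasing, and the only bounded positive orbit is the constant $v\equiv v_c$; reverting gives the explicit singular solution $u(x)=v_c|x|^{-(n-2)/2}$ with the announced prefactor.

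\medskip\noindent If $\mu<\mu_0$, then $V$ has two positive critical points $v_-<v_c<v_+$, with $v_-$ a local maximum of $V$ (center for the phase flow $v''=V'(v)$) and $v_+$ a local minimum (saddle). Energy conservation then confines any bounded positive orbit to a compact interval $[v_*,v^*]\subset(0,\infty)$, which via Emden--Fowler yields the two-sided bound $c_u\leq|x|^{(n-2)/2}u(x)\leq C_u$. Inside this energy range the bounded orbits split into closed loops around the center $v_-$ (giving periodic $v$) together with the stable/unstable manifolds of the saddle $v_+$ at energy level $\mathcal E=-V(v_+)$. Using the equilibrium relation $v_+^{\crits-2}-\mu v_+^{\crit-2}=(n-2)^2/4$ to simplify $V(v_+)$, one finds that $V(v_+)$ changes sign precisely at $\mu=\mu_1(n,s)$, with $V(v_+)>0$ iff $\mu>\mu_1$. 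For $\mu\leq\mu_1$, $V(v_+)\leq 0$ forces the saddle separatrices to exit $\{v>0\}$ in finite time, so the only bounded positive orbits are periodic; for $\mu_1<\mu<\mu_0$, $V(v_+)>0$ allows the inner branches of the saddle to glue into a homoclinic loop, producing a bounded positive orbit with $v(\pm\infty)=v_+$, i.e.\ the nonconstant-with-positive-limit case.

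\medskip\noindent The main obstacle is the radial symmetry step: the moving-plane method has to be run on the punctured unbounded domain $\rnp$ while simultaneously accommodating the singular Hardy weight $|x|^{-s}$ and the negative, non-monotone (in $u$) perturbation $-\mu u^{\crit-1}$. Once radial symmetry is in hand, the remainder is careful but essentially standard ODE phase-plane bookkeeping, with the threshold $\mu_1$ emerging from the explicit evaluation of $V(v_+)$ combined with the equilibrium relation.
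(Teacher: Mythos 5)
You propose to first establish radial symmetry of every positive solution on $\rnp$ via moving planes, and then to analyze the resulting autonomous ODE. The paper does not do this, and the radial-symmetry step is a genuine gap in your plan. The references you cite (Chou--Chu, Hsia--Lin--Wang) treat the non-negative nonlinearity $|x|^{-s}u^{\crits-1}$ alone; adding the negative, superlinear perturbation $-\mu u^{\crit-1}$ produces a sign-changing reaction term for which the standard moving-plane comparison on $\rnp$ (after a Kelvin transform) is not automatic, and you offer no argument to close it. You yourself flag this as the ``main obstacle,'' but the sharpest conclusions of the proposition --- nonexistence for $\mu>\mu_0$ and uniqueness for $\mu=\mu_0$ --- are exactly the ones that must hold for \emph{all} positive solutions, not just radial ones, so this step cannot be deferred.

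The paper circumvents radial symmetry entirely for $\mu\geq\mu_0$. It first gets the global bound $u(x)\leq C|x|^{-\frac{n-2}{2}}$ on $\rnp$ from Proposition~\ref{prop:bnd:u} combined with a Kelvin transform, rewrites the equation on the cylinder $\rr\times\mathbb{S}^{n-1}$ as $-\partial_{tt}v-\Delta_{\mathrm{can}}v+F'(v)=0$, and then \emph{averages over} $\mathbb{S}^{n-1}$. Since $F'(v)=v\,g(v)\geq 0$ when $\mu\geq\mu_0$, the average $\bar v$ is convex in $t$ and bounded, hence constant; the averaged equation then forces $F'(v(t,\theta))\equiv 0$ pointwise, which is impossible for $\mu>\mu_0$ (where $g\geq\eps_0>0$) and forces $v$ to be the unique zero of $g$ when $\mu=\mu_0$. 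This is cleaner and more robust than your plan: no symmetry is needed, and it replaces your somewhat vague ``energy-level-set argument shows orbits escape in finite time'' by the tight statement that a bounded convex function on $\rr$ is constant. Your phase-plane analysis for $0<\mu<\mu_0$ (center at $v_-$, saddle at $v_+$, homoclinic iff $V(v_+)>0$, sign change at $\mu_1$) matches what the paper records as ``classical ODE analysis'' and is fine \emph{for radial solutions}; note, though, that the paper too only asserts the periodicity/limit dichotomy for radial solutions, so your ODE bookkeeping is only ever applied where it is legitimate. The gap is solely in treating the radial-symmetry reduction as if it were available for the full equation.
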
  

\medskip\noindent{\it Proof of Proposition \ref{prop:q:crit:lim}:} We let $u\in C^\infty(\rnp)$ be a positive solution to \eqref{added}. It follows from Proposition \ref{prop:bnd:u} that $u$ is bounded from above by $C |x|^{-\frac{n-2}{2}}$ around $0$. We perform a Kelvin transform on $u$, so that equation \eqref{eq:crit} remains invariant: then the bound $C|x|^{-\frac{n-2}{2}}$ holds around $0$ for the transform of $u$. Going back to $u$, we have the same bound everywhere, so there exists $C>0$ such that
\begin{equation}\label{bnd:u:glob}
u(x)\leq C|x|^{-\frac{n-2}{2}}\hbox{ for all }x\in\rnp.
\end{equation} 
With the conformal map $\varphi$ defined in \eqref{def:phi}, we define
$$v(t,\theta):=e^{-\frac{n-2}{2}t}u(e^{-t}\theta)\hbox{ for all }t\in\rr\hbox{ and }\theta\in\mathbb{S}^{n-1}.$$
With the transformation law \eqref{conf:lap}, the critical equation \eqref{eq:crit} rewrites
\begin{equation}\label{eq:v:3}
-\partial_{tt}v-\Delta_{\hbox{can}_{n-1}}v+ F'(v)=0 \hbox{ in }\rr\times\mathbb{S}^{n-1},
\end{equation}
where 
$$F(v):=\frac{(n-2)^2}{8}v^2+\mu\frac{v^{\crit}}{\crit}-\frac{v^{\crits}}{\crits}.$$
We define $g(v):=v^{-1}F'(v)$ for $v>0$ and $g(0):=\frac{(n-2)^2}{4}$. The function $g$ has a unique critical point, it is decreasing before, and increasing after. As one checks, 
\begin{itemize}
\item If $\mu>\mu_0(n,s)$, then there exists $\eps_0>0$ such that $g(v)\geq \epsilon_0$ for all $v>0$;
\item If $\mu=\mu_0(n,s)$, then $g(v)\geq 0$ for all $v>0$, achieving $0$ only at one point;
\item If $0<\mu<\mu_0(n,s)$, then $\min g<0$ and $g$ vanishes exactly at two points referred to as $v_-<v_+$. In particular, $g'(v_-)<0<g'(v_+)$.
\end{itemize}

\medskip\noindent We assume that $\mu\geq \mu_0(n,s)$. Averaging \eqref{eq:v:3} over $\mathbb{S}^{n-1}$ yields
\begin{equation}\label{eq:average}
-\partial_{tt}\bar{v}+\overline{F'(v(t,\theta))}=0 \hbox{ in }\rr\times\mathbb{S}^{n-1},
\end{equation}
where $\bar{v}(t)$ is the average of $v(t,\theta)$ over $\mathbb{S}^{n-1}$. Since $F'(v(t,\theta))\geq 0$, we get that $\partial_{tt}\bar{v}\geq 0$, and therefore $\bar{v}$ is convex and bounded (this is a consequence of \eqref{bnd:u:glob}), so it is constant. Since $F'\geq 0$ and $\bar{v}$ is constant, \eqref{eq:average} yields $F'(v(t,\theta))\equiv 0$ and then $\mu=\mu_0(n,s)$ and $(t,\theta)\mapsto v(t,\theta)$ is constant equal to the unique zero of $g$. Going back to $u$ yields Proposition \ref{prop:q:crit:lim} for $\mu\geq \mu_0(n,s)$. This ends the proof of Proposition \ref{prop:q:crit:lim}.

\medskip\noindent When $\mu<\mu_0(n,s)$ and $u$ is radially symmetric, the study of $u$ is equivalent to the study of positive solutions $v:\rr\to\rr$ to \eqref{eq:v:3}. The behavior is then a consequence of a classical ODE analysis.\qed

\medskip\noindent As a consequence, we get the following:

\begin{prop}\label{prop:q:crit:2} Let $u\in C^\infty(\ballp)$ be a positive solution to
\begin{equation}
-\Delta u= \frac{u^{\crits-1}}{|x|^s}-\mu u^{\crit-1} \hbox{ in }\ballp.
\end{equation}
If $\mu>\mu_0(n,s)$, then $0$ is a removable singularity. If $\mu=\mu_0(n,s)$, then either $0$ is a removable singularity, or 
$$\lim_{x\to 0}|x|^{\frac{n-2}{2}}u(x)=\left(\frac{2-s}{2\mu_0(n,s)}\right)^{\frac{n-2}{2s}}.$$
\end{prop}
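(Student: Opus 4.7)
The plan is to deduce Proposition~\ref{prop:q:crit:2} by contrapositive, combining the two-sided trapping in the nonremovable case from Proposition~\ref{prop:q:crit} with the Liouville-type classification of entire positive solutions in Proposition~\ref{prop:q:crit:lim}, glued together by a scale-invariant blow-up argument.

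First, assume $0$ is not a removable singularity. Then Proposition~\ref{prop:q:crit} gives constants $c_1,c_2>0$ with
\[
c_1|x|^{-\frac{n-2}{2}}\le u(x)\le c_2|x|^{-\frac{n-2}{2}} \quad \text{on } B_{1/2}(0)\setminus\{0\}.
\]
For any sequence $t_i\downarrow 0$, define $u_i(x):=t_i^{\frac{n-2}{2}}u(t_i x)$. Because $q=\crit-1$ renders the full nonlinearity scale invariant, $u_i$ satisfies exactly the same equation \eqref{eq:crit} on $B_{1/t_i}(0)\setminus\{0\}$, and the two-sided bound transports to $c_1|x|^{-\frac{n-2}{2}}\le u_i(x)\le c_2|x|^{-\frac{n-2}{2}}$ uniformly on compact subsets of $\rnp$. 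By standard interior elliptic theory (see, e.g., Gilbarg--Trudinger \cite{gt}), some subsequence converges in $C^2_{\rm loc}(\rnp)$ to $U\in C^\infty(\rnp)$, a positive solution of \eqref{added} on all of $\rnp$.

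Now I invoke Proposition~\ref{prop:q:crit:lim}. If $\mu>\mu_0(n,s)$, the mere existence of such a $U$ contradicts the nonexistence statement, so $0$ must in fact be a removable singularity. If $\mu=\mu_0(n,s)$, uniqueness forces $U(x)=C_0|x|^{-\frac{n-2}{2}}$ with $C_0:=\left(\frac{2-s}{2\mu_0(n,s)}\right)^{\frac{n-2}{2s}}$. To upgrade this to a genuine pointwise limit, I take an arbitrary sequence $y_j\to 0$, set $t_j:=|y_j|$, extract a further subsequence so that $\theta_j:=y_j/|y_j|\to\theta_\infty\in\partial B_1(0)$, and read off
\[
|y_j|^{\frac{n-2}{2}}u(y_j)=u_j(\theta_j)\longrightarrow U(\theta_\infty)=C_0.
\]
Since the limiting value $C_0$ is independent of the chosen sequence, $\lim_{x\to 0}|x|^{\frac{n-2}{2}}u(x)=C_0$, as claimed.

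There is no serious obstacle, since essentially all the work has already been localized in Propositions~\ref{prop:q:crit} and~\ref{prop:q:crit:lim}; the only point to check carefully is the $C^2_{\rm loc}(\rnp)$ compactness of the rescaled family on compact sets bounded away from $0$, which follows at once from the uniform two-sided bound and standard Schauder estimates for the rescaled equation. The scale invariance at $q=\crit-1$ is what makes the blow-up particularly clean (no power of $t_i$ survives in the nonlinearity), and the strong rigidity in Proposition~\ref{prop:q:crit:lim} then immediately delivers both the removability (when $\mu>\mu_0$) and the explicit limit (when $\mu=\mu_0$).
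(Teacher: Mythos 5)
Your proposal is correct and follows essentially the same route as the paper: both pass to the two-sided trapping from Proposition~\ref{prop:q:crit}, rescale along an arbitrary sequence using the scale invariance of the equation at $q=\crit-1$, invoke standard elliptic compactness to extract a $C^2_{\rm loc}(\rnp)$ limit, and then apply the rigidity of Proposition~\ref{prop:q:crit:lim} to exclude $\mu>\mu_0(n,s)$ and to identify the unique limit when $\mu=\mu_0(n,s)$. Your final step, making explicit that the uniqueness of the limit $U$ across all rescaling sequences upgrades the subsequential convergence to a genuine limit, is exactly the sense of the paper's closing remark.
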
  

\smallskip\noindent{\it Proof of Proposition \ref{prop:q:crit:2}:} We assume that $\mu\geq \mu_0(n,s)$ and that $u$ is a solution to the problem with nonremovable singularity. It then follows from Proposition \ref{prop:q:crit} that there exist $c_1,c_2>0$ such that
$$c_1\leq |x|^{\frac{n-2}{2}} u(x)\leq c_2\hbox{ for all }x\in B_{1/2}(0)\setminus \{0\}.$$
We let $(r_i)_i>0$ be any sequence going to $0$, and we define $u_i(x):=r_i^{\frac{n-2}{2}}u(r_i x)$ for all $x\in B_{r_i^{-1}}(0)\setminus \{0\}$. We have that $c_1\leq |x|^{\frac{n-2}{2}} u_i(x)\leq c_2$ for all $x\in B_{r_i^{-1}}(0)\setminus \{0\}$ and $-\Delta u_i=|x|^{-s}u_i^{\crits-1}-\mu u_i^{\crit-1}$ in $B_{r_i^{-1}}(0)\setminus \{0\}$: it then follows from elliptic theory that, up to a subsequence, $u_i\to U$ in $C^2_{\rm loc}(\rnp)$ as $i\to +\infty$. Passing to the limit in the equation yields that $U$ is a positive smooth solution to \eqref{added}. It then follows from Proposition \ref{prop:q:crit:lim} that $\mu=\mu_0(n,s)$ and $U=c|\cdot|^{-\frac{n-2}{2}}$ (for a fixed value $c>0$) is independent of the choice of the sequence $(r_i)_i$. This uniqueness yields Proposition \ref{prop:q:crit:2}.\hfill$\Box$

\end{document}